\newtheorem{Def}{Definition}[section]
\newtheorem{lem}[Def]{Lemma}
\newtheorem{cor}[Def]{Corollary}
\newtheorem{tho}[Def]{Theorem}
\newtheorem{pro}[Def]{Proposition}
\newtheorem{rems}[Def]{Remarks}
\newtheorem{exam}[Def]{Example}
\newcommand{\la}{\langle}
\newcommand{\ra}{\rangle}
\newcommand{\E}{\mathbb E}
\newcommand{\D}{\mathbb D}
\newcommand{\R}{\mathbb R}
\newcommand{\uH}{\mathbb H}
\newcommand{\ud}{\mathrm d}
\newcommand{\z}{\mathbf z}
\newcommand{\y}{\mathbf y}
\numberwithin{equation}{section}
\begin{document}

\title[]{Convergence of Density Approximations 
for Stochastic Heat Equation}

\author{Chuchu Chen }
\address{1. LSEC, ICMSEC, \\
Academy of Mathematics and Systems Science,\\ Chinese Academy of Sciences,\\
Beijing, 100190, China\\ 
2. School of Mathematical Science,\\ University of Chinese Academy of Sciences,\\ Beijing, 100049, China}
\curraddr{}
\email{chenchuchu@lsec.cc.ac.cn}
\thanks{}

\author{Jianbo Cui }
\address{
 School of Mathematics, Georgia Institute of Technology, Atlanta, GA 30332, USA}
\curraddr{}
\email{jianbocui@lsec.cc.ac.cn}
\thanks{The work is supported by National Natural Science Foundation of China grants 11971470, 11871068, 11926417.}

\author{Jialin Hong}
\address{1. LSEC, ICMSEC, \\
Academy of Mathematics and Systems Science,\\ Chinese Academy of Sciences,\\
Beijing, 100190, China\\ 
2. School of Mathematical Science,\\ University of Chinese Academy of Sciences,\\ Beijing, 100049, China}
\curraddr{}
\email{hjl@lsec.cc.ac.cn}
\thanks{}

\author{Derui Sheng}
\address{1. LSEC, ICMSEC, \\
Academy of Mathematics and Systems Science,\\ Chinese Academy of Sciences,\\
Beijing, 100190, China\\ 
2. School of Mathematical Science,\\ University of Chinese Academy of Sciences,\\ Beijing, 100049, China}
\curraddr{}
\email{sdr@lsec.cc.ac.cn (Corresponding author)}
\thanks{}

\subjclass[2010]{Primary 60H15; Secondary 60H07}

\keywords{density, approximation, convergence, Malliavin calculus, stochastic heat equation}
\begin{abstract}
This paper investigates the convergence of density approximations for stochastic heat equation in both    uniform convergence  topology and total variation distance. 
The convergence order of the densities in uniform convergence  topology is shown to be exactly $1/2$ in the nonlinear case and nearly $1$ in the linear case. This result implies that
the distributions of the approximations always converge to the distribution of the origin equation in total variation distance.
As far as we know, this is the first result on the convergence of  density approximations to the stochastic partial differential equation. 

\end{abstract}

\maketitle

\section{Introduction}\label{S1}

%

In this paper, we consider the  stochastic heat equation driven by space-time white noise:
 \begin{equation}\label{SHE}
\partial_t u(t, x)=\partial_{xx}u(t, x)+b(u(t, x))+\sigma\dot{W}(t, x),~(t, x) \in(0, T] \times[0,1]
\end{equation}
with initial value $u(0, x)=u_{0}(x),~x \in[0,1]$ and Neumann boundary condition $\partial_{x}u(t, 0)=\partial_{x}u(t, 1)=0,~t \in[0, T].$
Here, $T>0$ is a fixed number and $\sigma\neq0$ is a constant. Eq. \eqref{SHE} arising in many physical problems, characterizes the evolution of a scalar field in a space-time-dependent random medium. 
The choice of the white noise as random potential corresponds to considering those regimes with very rapid variations, the type of turbulent flows (see \cite{BC95}). The density function of the solution characterizes all relevant
probabilistic information. 
Concerning the density of $u(t,x)$, its existence, regularity and strictly positivity under suitable assumptions have been well studied (e.g. \cite{BP98,MD08,NQ09}).
If the coefficient $b$ in Eq. \eqref{SHE} is infinitely differentiable with bounded derivatives, then as a direct consequence of \cite{MD08},
 for any $0 \le x\le1,\, t > 0,$
$u(t,x)$ admits a smooth density, and as is shown in \cite{BP98},
for any $0 \le x_1 <\cdots< x_d \le1,\, t > 0,$ the law of $(u(t, x_1),\cdots,u(t, x_d))$ admits a strictly positive smooth density, which can be seen as a regularity result for the marginal distribution of $\mathcal{C}([0,1])$-valued random variable $u(t,\cdot)$. 
Moreover,
if $b$  is continuously differentiable with bounded derivative, \cite{NQ09} gives the lower and upper Gaussian bound for the density of $u(t,x)$. 

It is a challenge topic to obtain the density exactly or even approximately. However, for stochastic heat equations, even for stochastic partial differential equations, to the best of our knowledge, there are 
 few results concerning the approximation of the density of the origin equation. 
 The purpose of this paper is to develop a strategy to investigate the convergence of
 the density approximations of the exact solution to Eq. \eqref{SHE}  in a suitable topology, via a sequence of 
perturbed equations 
   \begin{equation}\label{SAEE}
  \partial_t u^{\delta}(t, x)=\partial_{xx}u^{\delta}(t, x)+b(u^{\delta}(\delta[t/\delta], x))+\sigma\dot{W}(t, x),~(t, x) \in(0, T] \times[0,1],
\end{equation}
where $\delta=T/N$, $N\in\mathbb N^{+}$
and $[\cdot]$ denotes the greatest-integer function.
 Different from Eq. \eqref{SHE}, the drift term $(t,x)\rightarrow b(u^{\delta}(\delta[t/\delta], x))$ in Eq. \eqref{SAEE} being a piecewise constant in the variable $t$ is a discontinuous function
and converges to the drift term of Eq.  \eqref{SHE} as $\delta$ tends to $0$ formally. Hence it is natural and important to study the existence and  convergence of the density of Eq. \eqref{SAEE}. 
 Our main results are the convergence of density in total variation distance and convergence order of density in uniform convergence topology.
 \begin{tho}\label{dstyt}
Assume that $b\in\mathcal C_{\mathbf b}^\infty$, $\delta\in\left(0,\frac{T}{12}\wedge\frac{\log\frac{3}{2}}{4|b|_1}\right)$. Then there exists some constant $C=C(T,b,\sigma,\|u_0\|_E)$ such that for any $x\in(0,1)$,
 \begin{equation}\label{error1}
\|q_{T,x}^\delta-q_{T,x}\|_{L^\infty(\R)}\le C\delta^{\frac{1}{2}},
 \end{equation}
 where $q_{T,x}^\delta$ and $q_{T,x}$ are the densities of $u^\delta(T,x)$ and $u(T,x)$, respectively.
\end{tho}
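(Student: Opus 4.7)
The plan is to estimate $|q_{T,x}^\delta(y)-q_{T,x}(y)|$ uniformly in $y\in\R$ via Fourier inversion combined with iterated Malliavin integration by parts. Writing the characteristic functions $\varphi_\delta(\xi):=\E[e^{i\xi u^\delta(T,x)}]$ and $\varphi(\xi):=\E[e^{i\xi u(T,x)}]$, Fourier inversion (valid once both densities are smooth with rapidly decaying Fourier transform, as provided by the non-degeneracy of the Malliavin matrix) yields $\|q_{T,x}^\delta-q_{T,x}\|_{L^\infty(\R)}\le\tfrac{1}{2\pi}\int_\R|\varphi_\delta(\xi)-\varphi(\xi)|\,\ud\xi$, so it suffices to produce a bound on $|\varphi_\delta-\varphi|$ which is simultaneously small in $\delta$ and integrable in $\xi$. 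Introducing the interpolant $F(\theta):=\theta u^\delta(T,x)+(1-\theta)u(T,x)$ and the remainder $G:=u^\delta(T,x)-u(T,x)$, the elementary identity $e^{i\xi a}-e^{i\xi b}=i\xi\int_0^1 e^{i\xi(\theta a+(1-\theta)b)}(a-b)\,\ud\theta$ together with $k$-fold iteration of the Malliavin IBP rule $\E[\psi'(F)G]=\E[\psi(F)H_1(F,G)]$, where $H_1(F,G):=\delta(G\cdot DF/\|DF\|_\mathcal H^2)$ and $\mathcal H$ denotes the Cameron--Martin space of $W$, gives
\begin{equation*}
\varphi_\delta(\xi)-\varphi(\xi)=\frac{1}{(i\xi)^{k-1}}\int_0^1\E\bigl[e^{i\xi F(\theta)}H_k(F(\theta),G)\bigr]\,\ud\theta,
\end{equation*}
with $H_k$ defined recursively by $H_k(F,G)=H_1(F,H_{k-1}(F,G))$.

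The central analytic task is then to establish
\begin{equation*}
\sup_{\theta\in[0,1]}\|H_k(F(\theta),G)\|_{L^1(\Omega)}\le C\delta^{1/2}\qquad\text{for every }k\in\N.
\end{equation*}
Unpacking $H_k$ via the Skorohod integral and invoking Meyer's inequalities reduces this to three uniform-in-$\delta$ ingredients, each of which should be within reach of the Malliavin calculus framework built earlier in the paper: (i) Malliavin--Sobolev moment bounds $\sup_\delta(\|u^\delta(T,x)\|_{\D^{k,p}}+\|u(T,x)\|_{\D^{k,p}})<\infty$; (ii) non-degeneracy $\sup_{\theta,\delta}\E[\|DF(\theta)\|_\mathcal H^{-2p}]<\infty$; and (iii) the strong-type convergence rate $\|u^\delta(T,x)-u(T,x)\|_{\D^{k,p}}\le C\delta^{1/2}$. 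The factor $\delta^{1/2}$ enters solely through (iii): the mild formulations of $Du$ and $Du^\delta$ share the same stochastic inhomogeneity $\sigma G_{t-r}(x,z)$, so their difference satisfies a deterministic linear Volterra equation driven purely by the drift-discretization error, whose temporal Hölder regularity of order $1/2$ yields the half-power rate. Combining with the trivial bound $|\varphi_\delta-\varphi|\le|\xi|\,\|G\|_{L^1(\Omega)}\le C|\xi|\delta^{1/2}$ for $|\xi|\le 1$ and the Malliavin bound above with $k\ge 3$ for $|\xi|>1$, the frequency integral is dominated by $C\delta^{1/2}$, from which \eqref{error1} follows.

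The principal obstacle is expected to be item (ii): although $Du(T,x)$ and $Du^\delta(T,x)$ each admit finite negative moments, their convex combination $DF(\theta)=\theta Du^\delta+(1-\theta)Du$ could in principle degenerate through partial cancellation at intermediate $\theta$. My plan is to leverage the strong convergence $\|Du^\delta-Du\|_{L^{2p}(\Omega;\mathcal H)}\lesssim\delta^{1/2}$ from (iii): for $\delta$ sufficiently small, on an event of probability $1-O(\delta^p)$ one has $\|DF(\theta)\|_\mathcal H\ge\tfrac12\|Du(T,x)\|_\mathcal H$ uniformly in $\theta$, so a Chebyshev/small-ball decomposition transfers the known negative moments of $\|Du(T,x)\|_\mathcal H^{-1}$ (available for $x\in(0,1)$ via Neumann heat-kernel estimates in the spirit of Nualart--Quer-Sardanyons) to all interpolants, uniformly in $\theta$ and $\delta$.
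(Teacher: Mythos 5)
Your overall architecture (Fourier inversion of the characteristic-function gap, interpolation $F(\theta)=\theta u^\delta+(1-\theta)u$, iterated Malliavin integration by parts to manufacture $|\xi|$-decay, and a uniform negative-moment bound for the interpolated Malliavin matrix) is a legitimate and well-known route to density estimates, but it is genuinely different from the paper's, and in this problem it does not reach the advertised rate. The paper does \emph{not} integrate by parts against the endpoint difference $u^\delta(T,x)-u(T,x)$; it telescopes the weak error over the time grid using the one-step flow interpolants $Y_i^\tau=\tau\Phi_{t_i}(t_{i-1},\cdot)+(1-\tau)\varphi_{t_i}(t_{i-1},\cdot)$, applies Malliavin IBP \emph{per step} against $\varphi_T^x(t_i,Y_i^\tau)$ (Lemma~\ref{G11}), and then obtains $\delta^{1/2}$ from the $b''$-terms $\mathcal J_3^i$ and $\mathcal K_3^i$ via the near-diagonal singularity $\int_0^1 G^2_{r-\theta}(y,\xi)\,d\xi\sim(r-\theta)^{-1/2}$, summed over the grid. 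That per-step mechanism is what captures the It\^o-integral cancellation and pushes the rate past the modulus of continuity of the Malliavin derivative.

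The critical gap is your item~(iii), the claim that $\|u^\delta(T,x)-u(T,x)\|_{\mathbb D^{k,p}}\le C\delta^{1/2}$, and the stated reason for it ("temporal H\"older regularity of order $1/2$"). For the $1$D stochastic heat equation driven by space--time white noise the temporal H\"older exponent of the solution is $1/4-\epsilon$, not $1/2$ (you may be thinking of the spatial exponent $1/2-\epsilon$). In the linear Volterra equation for $Du^\delta(T,x)-Du(T,x)$, the dominant source term is
\begin{equation*}
\int_r^T\!\!\int_0^1 G_{T-s}(x,z)\,\bigl[b'(u^\delta(\delta[s/\delta],z))-b'(u(s,z))\bigr]\,D_{r,y}u(s,z)\,dz\,ds,
\end{equation*}
and $\|u^\delta(\delta[s/\delta],z)-u(s,z)\|_p$ splits into the on-grid strong error (nearly order $1$) plus the modulus $\|u(\delta[s/\delta],z)-u(s,z)\|_p\asymp\delta^{1/4}$, which is an honest lower bound on that difference since it contains the fresh It\^o increment $\int_{\delta[s/\delta]}^s\!\int_0^1 G_{s-\theta}(z,\xi)\sigma\,W(d\theta,d\xi)$. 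Propagating this through the Gronwall/resolvent argument and integrating in $\mathbb H$ gives $\|Du^\delta(T,x)-Du(T,x)\|_{L^p(\Omega;\mathbb H)}\lesssim\delta^{1/4}$, and no amount of IBP applied to the \emph{fixed} factor $H_k(F(\theta),G)$ in your scheme recovers the lost half power, because the $\delta^{1/4}$ sits inside $G=u^\delta(T,x)-u(T,x)$ and its Malliavin derivatives, which you treat as a single global quantity. Carried through, your Fourier argument would yield $\|q^\delta_{T,x}-q_{T,x}\|_{L^\infty}\lesssim\delta^{1/4}$, which falls short of the theorem. To recover $\delta^{1/2}$ you would have to expose the It\^o increment inside $G$ and integrate by parts against it separately at each time step --- which is precisely the telescoping decomposition \eqref{weak error} the paper builds. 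Your item~(ii) (negative moments for the interpolant $\|DF(\theta)\|_{\mathbb H}$ uniformly in $\theta,\delta$) is plausible via the small-ball argument you sketch, but the paper sidesteps it entirely by integrating by parts against $\varphi_T^x(t_i,Y_i^\tau)$, whose non-degeneracy is established directly by a comparison-principle lower bound (Proposition~\ref{inverse}) rather than by perturbation from $Du(T,x)$.
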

\noindent In the particular case that $b$ is affine, the above convergence order $1/2$ of density can be improved to $1-\epsilon$ with some sufficient small $\epsilon>0$, which coincides with the strong convergence order $1-\epsilon$  in \cite{AP08}.  As far as we know, this is the first result on the  convergence of density approximations to the stochastic partial differential equation. 
Combining the uniformly 
boundedness of  $q_{T,x}^\delta$ in $L^1(\R)$ and Theorem \ref{dstyt}, it is concluded that $q_{T,x}^\delta$ converges to $q_{T,x}$ in $L^1(\R)$. 
This implies that
the distribution of $u^\delta(T,x)$  converges to the distribution of $u(T,x)$ in total variation distance.

 Our strategy to prove Theorem \ref{dstyt} is based on
the weak convergence analysis in the following sense
\begin{align}\label{WeaK}
|\mathbb E[f(u^\delta(T,x))-f(u(T,x))]|\le C\delta ^{\mu},
\end{align}
where $C$ is independent of $f$ and $\mu>0$.
One key ingredient for the test function-independent weak convergence analysis is the 
application of the Malliavin integration by parts formula, whose prerequisite is the above uniform non-degeneracy of $u^\delta(T,x)$. 
It is known that this non-degeneracy condition (see Definition \ref{Def1}) is exactly the condition of applying Bouleau--Hirsch’s criterion (see e.g. \cite[Theorem 2.1.4]{DN06}) to establish the existence and smoothness of the corresponding density. The major obstacle of this non-degeneracy lies in establishing the negative moments of the determinant of the corresponding Malliavin covariance matrix, which is overcome by proving a discrete version of comparison principle. 
Another difficulty is that
the moments of the Gateaux derivatives, as well as the Malliavin derivatives, of both $u(T,x)$ and $u^\delta(T,x)$ are dominated by the multiples of the corresponding Green function associated to  Neumann boundary condition, instead of being bounded by a constant. Based on the technical estimates on
 the Green function,
we remove the infinitesimal factor in the 
 weak convergence order of the numerical scheme in the literature (see \cite{DP09}) and prove that the weak convergence order in \eqref{WeaK}  is $1/2$. 
  In the particular case that  $b$ in Eq. \eqref{SHE} is affine, the convergence  order is improved to $1-\epsilon$ for some sufficient small $\epsilon>0$.

The paper is structured as follows. In Section \ref{S2}, Malliavin calculus associated to a white noise and the properties of the Green function are introduced briefly. Several auxiliary results concerning the regularity estimates of densities and derivatives of solutions are analyzed in Section \ref{S3}. Then we present the weak convergence analysis via Malliavin calculus in Section \ref{S4}. Finally, Section \ref{S5} is devoted to the convergence of density approximations of  Eq. \eqref{SAEE}.


\section{Preliminaries}\label{S2}

 Denote by $E$ the Banach space $\mathcal C([0,1])$ endowed with the norm $\|h\|_E=\sup_{x\in[0,1]}|h(x)|$and  by $\mathcal C_\mathbf{p}^\infty$ the set of all infinitely differentiable functions with polynomial growth from $\R$ to $\R$.
 Let $\mathcal C_\mathbf{b}^k$ be the set of all $k$ times continuous differentiable functions with bounded derivatives from $\R$ to $\R$ and $\mathcal C_\mathbf b^\infty:=\bigcap_{k\ge1}\mathcal C_\mathbf b^k$.
 For $b\in\mathcal \mathcal C_\mathbf{b}^k$, denote $|b|_i:=\sup_{x\in\R}|b^{(i)}(x)|$, $\forall\,i\in\{1,\cdots,k\}$. 
We denote  by  $\bm{\delta}_\z$ the Dirac delta function concentrated at $\z\in\R$ and by  $\mathbf x\wedge \mathbf y=\min\{\mathbf x,\mathbf y\}$, $\forall\,\mathbf x,\mathbf y\in\R$. Throughout this article, we use $C$ to denote a generic constant that may change from one place to another and depend on several parameters but never on the perturbation parameter $\delta$. When required, we will explicitly write $C(T,\sigma, \cdots )$ to emphasize the dependence of the constant $C$ upon the parameters $T,\sigma,\ldots$ In what follows, we adopt the conventions that a sum over an empty set is zero and   that $\frac{a}{0}=\infty$ provided $a>0$ is a constant.

In this section, we present some preliminaries, including some basic elements from Malliavin calculus associated to a white noise  
and several basic properties of the Green function associated to Eq. \eqref{SHE}.
 Let $\{W(t,x)\}_{\{(t,x)\in[0,T]\times[0,1]\}}$ be a Brownian sheet on $[0,T]\times[0,1]$, defined in a complete probability space $(\Omega,\mathscr{F},\mathbb{P})$. For $0\le t\le T$, let $\mathscr{F}_t$ be the $\sigma$-field generated by the random variables $\{W(s,x)\}_{\{(s,x)\in[0,t]\times[0,1]\}}$ and the $\mathbb{P}$-null sets. 
 In the context of Malliavin calculus, the isonormal Gaussian family $\{W(h),h\in\uH\}$ corresponding to $\uH:=L^2([0,T]\times[0,1])$ is  given by the Wiener integral
$$W(h)=\int_{0}^T\int_{0}^{1}h(s,y)W(ds,dy).$$
We are interested in Eq. \eqref{SHE} and always assume that $u_0\in E$ is deterministic. If the coefficient $b:\mathbb{R}\rightarrow\mathbb{R}$ satisfies the global Lipschitz condition, 
the rigorous meaning of Eq. \eqref{SHE} is given by means of (see e.g. \cite{WJ86}):
\begin{align}\label{exact sol}
u(t,x)=&\int_0^1G_{t}\left(x,y\right)u_0(y)\ud y+\int_{0}^{t}\int_{0}^{1}G_{t-s}(x,y)b(u(s,y))\ud y\ud s +\int_{0}^{t}\int_{0}^{1}G_{t-s}(x,y)\sigma W(\ud s,\ud y),
\end{align}
where $G_t(x,y),\,(t,x,y)\in\mathbb{R}_+\times(0,1)^2$, is the Green function associated to the stochastic heat equation on $[0,1]$ with Neumann boundary condition. 
Similarly, the mild solution
of Eq. \eqref{SAEE} is given by
\begin{align}\nonumber
u^\delta(t,x)=&\int_0^1G_{t}\left(x,y\right)u_0(y)\ud y+\int_{0}^{t}\int_{0}^{1}G_{t-s}(x,y)b(u^{\delta}(\delta[s/\delta], y))\ud y\ud s \\\label{perturbed sol}
&+\int_{0}^{t}\int_{0}^{1}G_{t-s}(x,y)\sigma W(\ud s,\ud y).
\end{align}
We would like to mention that the mild solution given by \eqref{perturbed sol} to Eq. \eqref{SAEE} corresponds to the accelerate exponential Euler scheme in numerical analysis (see e.g. \cite{AP08}). By denoting $t_i=i\delta$, we have
\begin{align*}
u^\delta(t_{i+1},x)=&\int_{0}^{1}G_{\delta}(x,y)u^\delta(t_{i},y)\ud y+\int_{t_i}^{t_{i+1}}\int_{0}^{1}G_{t_{i+1}-s}(x,y)b\left(u^\delta(t_{i},y)\right)\ud y\ud s \\
&+\int_{t_i}^{t_{i+1}}\int_{0}^{1}G_{t_{i+1}-s}(x,y)\sigma W(\ud s,\ud y).
\end{align*}

\subsection{Green function}
The
explicit formula of the Green function $G$ in \eqref{exact sol} is well-known,
 \begin{equation}\label{GD}
 G_{t}(x, y)=\frac{1}{\sqrt{4 \pi t}} \sum_{n=-\infty}^{+\infty}\left(e^{-\frac{(x-y-2 n)^{2}}{4 t}}+e^{-\frac{(x+y-2 n)^{2}}{4 t}}\right).
 \end{equation} 
Denote by $P_t(x,y)=\frac{1}{\sqrt{4\pi t}}\exp\left(-\frac{(x-y)^{2}}{4t}\right)$ the heat kernel on $\R$.
Hereafter, the following facts will be used frequently (\cite[Appendix]{BP98}):
\begin{itemize}
\item[(1)] For any $(t,x,y)\in\R_+\times(0,1)^2$, $G_t(x,y)>0$ and $\int_{0}^{1} G_{t}(x, y) \ud y=1$.
\item[(2)] Semigroup property: $\int_{0}^{1} G_{t}(x, y) G_{s}(y, z) \ud y=G_{s+t}(x, z)$,$\forall\,s,t\in\R_+,\,x,z\in(0,1)$.
\item[(3)] There exists a constant $K$ depending on $T$ such that for $(t,x,y)\in(0,T]\times(0,1)^2$,
\begin{equation}\label{GG0}
 \frac{1}{K}P_t(x,y) \le G_t(x,y)\le KP_t(x,y).
\end{equation}
\end{itemize}
For any $t>0$ and $x,y\in\R$, it is fairly understood that $P^2_t(x,y)=\sqrt{1/{8\pi t}}P_{t/2}(x,y)$ and 
$P_{s}(x,y)\le \sqrt{t/s}P_{t}(x,y)$ provided $0<s<t\le T$.
The explicit formula for $G_t(x,y)$ is complicated, whose estimation will be converted into the estimation of $P_t(x,y)$ in view of \eqref{GG0}. For instance, there exists $C=C(T)$ such that  $\forall\, x,\,y\in(0,1)$, $0<s<t\le T$,
\begin{equation}\label{Gts0}
G^2_{t}(x,y)\le\frac{C}{\sqrt{t}}G_{\frac{t}{2}}(x,y)
\end{equation}
and 
\begin{equation}\label{Gts}
G_{s}(x,y)\le C\sqrt{t/s}G_{t}(x,y). 
\end{equation}
\begin{lem}\label{Green}
For any $\nu\in(\frac{1}{3},1),$ there is $C=C(T,\nu)$ such that for any $0<s<t\le T$,
\begin{equation*}
\max\left(\int_0^1|G_t(x,y)-G_s(x,y)|\ud x,\int_0^1|G_t(x,y)-G_s(x,y)|\ud y\right)\le Cs^{-\nu}(t-s)^\nu.
\end{equation*}
\end{lem}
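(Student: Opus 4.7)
My plan is to use the heat-equation structure of $G_r$ to represent $G_t-G_s$ as a time integral of $\partial_{xx}G_r$, establish a uniform $L^1_x$-bound on $\partial_{xx}G_r$ yielding a $\log(t/s)$ estimate, and then convert this to the claimed polynomial rate via an elementary inequality on $\log(1+a)$. The $\ud y$-estimate will follow from the $\ud x$-estimate by the symmetry $G_r(x,y)=G_r(y,x)$ visible from \eqref{GD}.

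Concretely, since $G_r(\cdot,y)$ solves the heat equation with Neumann boundary data, $\partial_r G_r(x,y)=\partial_{xx}G_r(x,y)$ and hence $G_t(x,y)-G_s(x,y)=\int_s^t\partial_{xx}G_r(x,y)\,\ud r$. The key auxiliary bound to establish is
\begin{equation*}
\int_0^1|\partial_{xx}G_r(x,y)|\,\ud x\le C(T)/r,\qquad r\in(0,T],\,y\in(0,1).
\end{equation*}
I would obtain it by differentiating the image representation \eqref{GD} term by term, using the explicit identity $\partial_{xx}P_r(x,z)=\bigl((x-z)^2-2r\bigr)/(4r^2)P_r(x,z)$ and the Gaussian moment $\int_\R(x-z)^2P_r(x,z)\,\ud x=2r$, which together give $\int_\R|\partial_{xx}P_r(x,z)|\,\ud x\le 1/r$ for each image point $z$; the infinite sum over images is then controlled by the Gaussian decay of $\int_0^1|\partial_{xx}P_r(x,z)|\,\ud x$ in $|z|$, uniformly for $r\le T$, effectively restricting the sum to $O_T(1)$ relevant terms. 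Fubini then yields $\int_0^1|G_t-G_s|\,\ud x\le C\log(t/s)$.

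To finish, I would apply the elementary inequality $\log(1+a)\le C_\nu a^\nu$, valid for all $a\ge 0$ and $\nu\in(0,1)$ with $C_\nu$ depending only on $\nu$ (immediate from $\log(1+a)\le a$ on $[0,1]$ and $\log a = o(a^\nu)$ as $a\to\infty$), with $a=(t-s)/s$. This gives $\log(t/s)\le C_\nu((t-s)/s)^\nu$ and hence
\begin{equation*}
\int_0^1|G_t(x,y)-G_s(x,y)|\,\ud x\le C(T,\nu)\,s^{-\nu}(t-s)^\nu
\end{equation*}
for every $\nu\in(0,1)$, which a fortiori covers the stated range $(1/3,1)$. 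The $\ud y$-version is immediate from $G_r(x,y)=G_r(y,x)$.

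The main obstacle is the auxiliary $L^1_x$-bound on $\partial_{xx}G_r$: naive term-by-term application of $\int_\R|\partial_{xx}P_r(x,z)|\,\ud x\le 1/r$ within \eqref{GD} produces a divergent sum over images, so one must exploit that each image term, when restricted to $[0,1]$, inherits Gaussian decay in $|z|$ to obtain a convergent sum of order $1/r$. This is precisely where the Neumann reflection structure enters and forces the constant to depend on $T$.
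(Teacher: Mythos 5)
Your proof is correct, but it takes a genuinely different route from the paper's. The paper splits $G_t=P_t+H_t$ (with $P_t$ the whole-line heat kernel and $H_t\in C^\infty([0,T]\times[0,1]^2)$, citing \cite{WJ86}), handles $P_t$ by invoking \cite[Corollary 2.2]{MR19} directly, and treats $H_t$ trivially by smoothness. You instead work with $G$ itself via the heat-equation identity $G_t-G_s=\int_s^t\partial_{xx}G_r\,\ud r$, establish $\int_0^1|\partial_{xx}G_r(x,y)|\,\ud x\le C(T)/r$, obtain the logarithmic bound, and close with $\log(1+a)\le C_\nu a^\nu$. Your argument is self-contained (no appeal to \cite{MR19}), and it delivers the conclusion for every $\nu\in(0,1)$, a strictly larger range than the paper's $(1/3,1)$; the latter restriction is inherited from the cited reference, not intrinsic to the estimate. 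The paper's approach buys brevity by offloading the core analytic estimate to the literature.

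One place to be explicit when you write this up: the claimed auxiliary bound $\int_0^1|\partial_{xx}G_r(x,y)|\,\ud x\le C(T)/r$ does \emph{not} follow by summing the term-wise bound $\int_\R|\partial_{xx}P_r(x,z)|\,\ud x\le 1/r$ over all images, since that sum diverges. The correct bookkeeping is: the finitely many images $z$ with $\operatorname{dist}(z,[0,1])\le2$ (say) contribute $O(1/r)$ by the term-wise bound; for the remaining images one should use the pointwise estimate $|\partial_{xx}P_r(x,z)|\le C\,r^{-3/2}\exp(-(x-z)^2/(8r))$ (absorbing the polynomial prefactor into half the Gaussian), so that each tail term contributes $O\!\left(r^{-3/2}e^{-c(|n|-1)^2/r}\right)$, and the whole tail sum is $O\!\left(r^{-3/2}e^{-c/r}\right)=O_T(1)$. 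You flagged this as the main obstacle and described the right mechanism, so this is a matter of filling in detail, not a gap in the idea.
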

\begin{proof}
Similar to \cite[Corollary 3.4]{WJ86}, the series expansion in \eqref{GD} shows that
\begin{equation}\label{GPH}
G_t(x,y)=P_t(x,y)+H_t(x,y)
\end{equation}
with $H_t(x,y)\in C^\infty([0,T]\times[0,1]^2)$. From \cite[Corollary 2.2]{MR19}, we have for any $\nu\in(\frac{1}{3},1)$,
\begin{equation*}
\max\left(\int_\R|P_t(x,y)-P_s(x,y)|\ud x,\int_\R|P_t(x,y)-P_s(x,y)|\ud y\right)\le Cs^{-\nu}(t-s)^\nu.
\end{equation*}
Finally, the proof is completed by the facts that $H_t(x,y)\in C^\infty([0,T]\times[0,1]^2)$ and
\begin{align*}
|G_t(x,y)-G_s(x,y)|&\le |P_t(x,y)-P_s(x,y)|+|H_t(x,y)-H_s(x,y)|.
\end{align*}
\end{proof}

When considering Dirichlet boundary condition, all the results in the paper hold as well with minor modification because the Green function $\mathbf{G}$ corresponding to the Dirichlet boundary condition on $[0,1]$ is 
  \begin{equation}\label{Diri}
\mathbf{G}_{t}(x, y)=\frac{1}{\sqrt{4 \pi t}} \sum_{n=-\infty}^{+\infty}\left(e^{-\frac{(x-y-2 n)^{2}}{4 t}}-e^{-\frac{(x+y-2 n)^{2}}{4 t}}\right),
 \end{equation} 
whose property is very similar to $G$. For more information on the properties of $\mathbf{G}$, the reader is referred to \cite[Lemma 7]{BM90}. The following two-parameter Gronwall lemma is essential in the moment estimates in section \ref{S3}, whose proof is given in Appendix.
\begin{lem}\label{GW}
Let $g_{s,y}(t,x)\ge0$ satisfy
\begin{equation*}
g_{s,y}(t,x)\le CG_{t-s}(x,y)+C\int_{s}^{t}\int_0^1 G_{t-r_1}{(x,z_1)}g_{s,y}(r_1,z_1)\ud z_1\ud r_1,
\end{equation*}
for some constant $C>0$ and all $0<s<t\le T$ and $x,y\in(0,1)$. Then for some $C=C(T)$,
\begin{equation*}
g_{s,y}(t,x)\le CG_{t-s}(x,y).
\end{equation*}
\end{lem}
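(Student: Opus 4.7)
The proof plan is to iterate the hypothesis repeatedly and exploit the semigroup property of the Green function to collapse the resulting iterated convolutions. Write $K(t,x) := CG_{t-s}(x,y)$ and define the operator
\[
(\mathcal{T}\phi)(t,x) := C\int_s^t\int_0^1 G_{t-r}(x,z)\phi(r,z)\,\ud z\,\ud r,
\]
so that the hypothesis becomes $g_{s,y}\le K+\mathcal{T}g_{s,y}$. Iterating this bound $n$ times gives
\[
g_{s,y}(t,x)\le\sum_{k=0}^{n}(\mathcal{T}^{k}K)(t,x)+(\mathcal{T}^{n+1}g_{s,y})(t,x),
\]
and the strategy is to control the series termwise and show that the remainder vanishes as $n\to\infty$.

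The first main step is to compute $\mathcal{T}^{k}K$ by induction. Using the semigroup property $\int_0^1 G_{t-r}(x,z)G_{r-s}(z,y)\,\ud z=G_{t-s}(x,y)$ one obtains $(\mathcal{T}K)(t,x)=C^{2}(t-s)G_{t-s}(x,y)$; induction with the same identity then yields
\[
(\mathcal{T}^{k}K)(t,x)=C^{k+1}\frac{(t-s)^{k}}{k!}G_{t-s}(x,y),
\]
so summing produces $\sum_{k=0}^{\infty}\mathcal{T}^{k}K\le Ce^{CT}G_{t-s}(x,y)$, which is the desired bound modulo the remainder. The same semigroup-collapsing trick applied to the $(n+1)$-fold convolution appearing in $\mathcal{T}^{n+1}g_{s,y}$ reduces it to a single spatial integral against $G_{t-r_{n+1}}(x,z_{n+1})$ multiplied by a simplex volume, giving
\[
(\mathcal{T}^{n+1}g_{s,y})(t,x)\le\frac{(CT)^{n}}{n!}\,(\mathcal{T}g_{s,y})(t,x).
\]

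The main (and only subtle) step is justifying that the remainder term tends to $0$. Since the hypothesis provides $\mathcal{T}g_{s,y}(t,x)\le g_{s,y}(t,x)$, it suffices to know that $g_{s,y}(t,x)$ is finite at the fixed point $(t,x)$ under consideration; this is implicit in the standing hypothesis that the inequality holds pointwise with $g_{s,y}<\infty$. Granting this, $(CT)^{n}/n!\to0$ forces $(\mathcal{T}^{n+1}g_{s,y})(t,x)\to0$, and passing to the limit in the iterated inequality yields $g_{s,y}(t,x)\le Ce^{CT}G_{t-s}(x,y)$, which is the claim after absorbing $e^{CT}$ into the constant $C=C(T)$. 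If one is uneasy about the pointwise finiteness, a standard workaround is to first truncate: replace $g_{s,y}$ by $g_{s,y}\wedge M$ and let $M\to\infty$ after the fact, or apply the iteration to a $\|\cdot\|_{L^{p}}$-type norm with respect to $y$ before specializing back.
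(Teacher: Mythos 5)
Your overall strategy — iterate the integral inequality and collapse the nested Green functions via the semigroup property, so that the main series sums to $Ce^{CT}G_{t-s}(x,y)$ and only a remainder must be killed — is exactly the paper's. The computation of $\mathcal{T}^kK$ and the reduction of $\mathcal{T}^{n+1}g_{s,y}$ to $\frac{(CT)^n}{n!}(\mathcal{T}g_{s,y})(t,x)$ are both correct. But the argument that the remainder vanishes has a genuine error: you assert that ``the hypothesis provides $\mathcal{T}g_{s,y}(t,x)\le g_{s,y}(t,x)$.'' It does not. The hypothesis reads $g_{s,y}\le K+\mathcal{T}g_{s,y}$, which rearranges to $\mathcal{T}g_{s,y}\ge g_{s,y}-K$, i.e.\ an inequality in the \emph{opposite} direction; there is simply no monotonicity lemma of the kind you invoke. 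Your fallback (truncating $g_{s,y}$ at level $M$) also does not repair this, because $g_{s,y}\wedge M$ need not satisfy the original integral inequality: $\mathcal{T}$ is monotone, so $\mathcal{T}(g_{s,y}\wedge M)\le\mathcal{T}g_{s,y}$, which is the wrong direction for preserving $g\le K+\mathcal{T}g$.

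What is actually needed, and what the paper supplies before iterating, is a crude \emph{a priori} bound on $g_{s,y}$ itself, not just abstract pointwise finiteness. Taking the supremum over $x\in(0,1)$ in the hypothesis and using $\sup_xG_{t-s}(x,y)\le C(t-s)^{-1/2}$ turns the integral inequality into a one-dimensional singular Gronwall inequality for $h(t):=\sup_{x}g_{s,y}(t,x)$, yielding $h(t)\le C(t-s)^{-1/2}+C$. Plugging this into the collapsed remainder gives an integrable Beta-type factor $\int_s^t(t-r)^{-1/2}(r-s)^{-1/2}\,\ud r$ multiplied by $(CT)^n/n!$, which tends to $0$. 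In short, the missing step is this preliminary Gronwall estimate; once you have it, your iteration closes and the rest of your proof is sound and essentially identical in spirit to the paper's.
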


\subsection{Malliavin calculus associated to white noise}
 We denote by
$\mathcal{S}$ the class of smooth $\R$-valued random variables such that $\mathbf F\in\mathcal{S}$ has the form 
$\mathbf F=\mathbf f(W(h_1),\ldots,W(h_n))$,
where $\mathbf f$ belongs to $\mathcal C_\mathbf p^\infty(\mathbb{R}^n)$, $h_i\in \uH,\, i=1,\ldots,n, \,n\ge 1$. Here, $\mathcal C_\mathbf p^\infty(\mathbb{R}^n)$ is the set of all infinitely continuously differentiable functions $\mathbf f : \R^n\rightarrow \R$ such that $\mathbf f$ and all of its partial derivatives have polynomial growth.
Then for any $p\ge 1$ and integer $k\ge1$, we denote by $\mathbb{D}^{k,p}$ the completion of $\mathcal{S}$ with respect to the norm
\begin{equation*}
\|\mathbf F\|_{k,p}=\left(\mathbb{E}\left[|\mathbf F|^p+\sum_{j=1}^{k}\|D^j\mathbf F\|_{\mathbb{H}^{\bigotimes j}}^p\right]\right)^{\frac{1}{p}},
\end{equation*}
where $D$ is the Malliavin derivative operator. In particular, for $p\ge1$, we simply write $\|\mathbf F\|_p$ as an abbreviation for $\|\mathbf F\|_{0,p}$.
Define
$$L^{\infty-}(\Omega):=\bigcap_{p\ge 1} L^p(\Omega),\,\mathbb{D}^{k,\infty}:=\bigcap_{p\ge 1} \mathbb{D}^{k,p},\,\mathbb{D}^\infty:=\bigcap_{k\ge 1} \mathbb{D}^{k,\infty}$$
to be topological projective limits. As in the Schwartz theory of distributions, $\mathbb D^{-k,p}$ is the topological dual of the Banach space $\mathbb D^{k,p^\prime}$ with $1/p+1/p^\prime=1$ and
$\mathbb{D}^{-\infty}=\bigcup_{p\ge 1}\bigcup_{k\ge 1}\mathbb{D}^{-k,p}$ is the space of generalized Wiener functionals.  The natural coupling of $\mathbf F\in\mathbb{D}^{k,p}$ and $\Phi\in\mathbb{D}^{-k,q}$ with $1/p+1/q=1$ or that of $\mathbf F\in\mathbb{D}^{\infty}$ and $\Phi\in\mathbb{D}^{-\infty}$ is denoted by $\mathbb{E}[\mathbf F\cdot\Phi]$. 

\begin{Def}\label{Def1}
A random vector $\mathbf F=(\mathbf F_1,\mathbf F_2,\cdots,\mathbf F_m)$ whose components are in $\mathbb D^\infty$ is non-degenerate if the Malliavin covariance matrix $\Gamma_\mathbf F:=(\langle D\mathbf F_i,D\mathbf F_j\rangle_{\mathbb{H}})_{1\le i,j\le m}$ is invertible a.s. and $(\det \Gamma_\mathbf F)^{-1}\in L^{\infty-}(\Omega).$

%
\end{Def}
In the special case $m=1$, we still call $\Gamma_\mathbf F:=\|D\mathbf F\|^2_\mathbb{H}$ the Malliavin covariance matrix of $\mathbf F$, although $\Gamma_\mathbf F$ is actually a scalar variable.

\section{Technical estimates}\label{S3}
The classical weak convergence analysis of stochastic partial differential equations has been researched during the past two decades (see e.g. \cite{CB20,CHW19,DP09,HW19} and references therein), where the test function $\phi$ requires to have boundedness derivatives up to some degree, and the weak convergence order relies on the regularity of $\phi$. However, this kind of weak convergence for approximations is equivalent to the weak convergence of the associated distributions and is not sufficient to derive the convergence of densities. 
 By \cite[Lemma 2.1.7]{DN06}, the probability of the law of $\mathbf F$ at $\z\in\R$ can be determined by the generalized expectation $\E[\bm{\delta}_{ \z}(\mathbf F)]$, provided $\mathbf F$ is a non-degenerate random variable. For any fixed $ \z\in\R$, $\zeta>0$, we define the following mapping
 \begin{equation}\label{Gaussian}
 \y\mapsto g_{\zeta}(\y-\z)=\frac{1}{\sqrt{2\pi \zeta}}e^{-\frac{|\y- \z|^2}{2\zeta}}.
 \end{equation}
  It is well known that $g_{n^{-1}}(\cdot- \z)\rightarrow \bm{\delta}_{ \z}(\cdot)$ as $n$ tends to $\infty$ in the distribution sense and is natural to consider the error between $\E\left[g_{n^{-1}}(u^\delta(T,x)- \mathbf z)\right]$ and $\E\left[g_{n^{-1}}(u(T,x)- \mathbf z)\right]$.
Therefore, an alternative space that test function $f$ lives in to derive the convergence of density of  Eq. \eqref{SAEE} is 
 $$\Psi:=\{f:\R\rightarrow\R|f\in\mathcal C_\mathbf{p}^\infty,\exists \,F:\R\rightarrow\R \,\text{such that}\, 0\le F\le 1\,\text{and}\,F^\prime=f\},$$
since $\{g_{n^{-1}}(\cdot- \z)\}_{n\ge1,\,\z\in\R}$ is an element of $\Psi$. In this section, we prove some technical results in preparation for the following test-function independent weak convergence analysis result. 
 \begin{tho}\label{err} 
Let $b\in\mathcal C_{\mathbf b}^\infty$, $\delta \in\left(0,\frac{T}{12}\wedge\frac{\log\frac{3}{2}}{4|b|_1}\right)$. Then there exists some positive constant $C=C(T,b,\sigma,\|u_0\|_E)$ such that for any $x\in(0,1)$ and $f\in\Psi$, it holds that
 \begin{equation}\label{order}
\left|\E[f(u^\delta(T,x))]- \E\left[f(u(T,x))\right]\right|\le C\delta^{\frac{1}{2}}.
 \end{equation}
 \end{tho}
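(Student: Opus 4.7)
The plan is to combine a backward Kolmogorov decomposition with iterated Malliavin integration by parts and the Green-function estimates of Section \ref{S2}, exploiting the non-degeneracy of $u(T,x;s,\varphi)$ established in Section \ref{S3}. For $(s,\varphi)\in[0,T]\times E$, introduce the auxiliary functional
\begin{equation*}
U(s,\varphi):=\E\bigl[f(u(T,x;s,\varphi))\bigr],
\end{equation*}
where $u(\cdot,\cdot;s,\varphi)$ denotes the mild solution of Eq. \eqref{SHE} started from $\varphi$ at time $s$. Since $u(T,x;s,\varphi)$ is non-degenerate for $s<T$, a single Malliavin integration by parts rewrites
\begin{equation*}
U(s,\varphi)=\E\bigl[F(u(T,x;s,\varphi))\,\mathcal H(s,\varphi)\bigr]
\end{equation*}
for an explicit Skorohod-integral weight $\mathcal H$. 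Because $0\le F\le 1$, this already bounds $U$ in a way that depends on $f$ only through its bounded antiderivative $F$, and further Malliavin manipulations yield analogous test-function-independent $L^p$-bounds on the Gateaux derivatives $D_\varphi U$ and $D_\varphi^2 U$, at the cost of integrable singular factors of order $(T-s)^{-\alpha}$.

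Next, I would apply the mild It\^o formula (equivalently, a telescoping decomposition on the grid $\{t_i=i\delta\}$) to $t\mapsto U(t,u^\delta(t,\cdot))$ on $[0,T]$. Using $U(T,\cdot)=f(\cdot(x))$, $U(0,u_0)=\E[f(u(T,x))]$, the backward Kolmogorov equation satisfied by $U$ relative to Eq.~\eqref{SHE}, and the fact that $u$ and $u^\delta$ share the same noise structure (so that the quadratic It\^o corrections coming from the generator of Eq.~\eqref{SHE} and of Eq.~\eqref{SAEE} match and cancel), the surviving contribution is exactly the drift discrepancy:
\begin{align*}
&\E[f(u^\delta(T,x))]-\E[f(u(T,x))]\\
&\qquad=\E\int_0^T\!\!\int_0^1 D_\varphi U(t,u^\delta(t))(y)\bigl[b(u^\delta(\delta[t/\delta],y))-b(u^\delta(t,y))\bigr]\,\ud y\,\ud t.
\end{align*}

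The third step estimates the right-hand side. Taylor expanding $b(u^\delta(t_i,y))-b(u^\delta(t,y))$ to first order around $u^\delta(t_i,y)$, with $t_i:=\delta[t/\delta]$, the quadratic remainder is controlled by $|b|_2\bigl(u^\delta(t,y)-u^\delta(t_i,y)\bigr)^2$, which produces $O(\delta^{1/2})$ once the $1/4$-temporal H\"older regularity of $u^\delta$ is combined with \eqref{Gts0}. The first-order term splits further: its drift part is pointwise $O(\delta)$ in $y$, whereas its stochastic-integral part is handled by a second Malliavin integration by parts that transfers the noise increment onto $D_\varphi U(t,u^\delta(t))(y)$. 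The resulting Malliavin-weight expression, together with Lemma \ref{Green}, Lemma \ref{GW}, and the squaring identity \eqref{Gts0}, collapses the products of Green kernels into a single integrable factor of size $\delta^{1/2}$, thus matching the remainder order.

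The principal obstacle I anticipate is producing sharp, test-function-independent $L^p$-estimates on $D_\varphi U(s,\varphi)$ and $D_\varphi^2 U(s,\varphi)$ with precise control of their $(T-s)$-singularity, and then balancing that singularity against the Green-function convolutions in the time integral above. This is exactly where the infinitesimal $\delta^{1/2-\epsilon}$ factor in \cite{DP09} originates; reaching the clean rate $\delta^{1/2}$ will hinge on exploiting $G_t^2\le C\,t^{-1/2}G_{t/2}$ from \eqref{Gts0} together with the H\"older bound of Lemma \ref{Green} to absorb every product of two kernels into a single Green function whose time integral is finite and of the correct order, while the factor of $F$ (bounded by $1$) prevents the final bound from depending on any norm of $f$.
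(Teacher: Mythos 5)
Your proposal takes a genuinely different route from the paper's proof, and it is worth comparing the two.  You organize the error through the value function $U(s,\varphi)=\E[f(u(T,x;s,\varphi))]$, the backward Kolmogorov equation associated with Eq.~\eqref{SHE}, and a mild It\^o formula applied to $t\mapsto U(t,u^\delta(t))$; the weak error then becomes a single time integral of the drift discrepancy $b(u^\delta(\delta[t/\delta],\cdot))-b(u^\delta(t,\cdot))$ paired with $D_\varphi U$.  The paper deliberately avoids this formulation.  Its proof telescopes $\E[f(\varphi_T^x(t_i,A_i))]-\E[f(\varphi_T^x(t_i,B_i))]$ over the grid, where $A_i$ and $B_i$ are the exact and perturbed one-step flows started from the same numerical state at $t_{i-1}$, and then analyzes the resulting one-step remainder $R_1^i,\dots,R_5^i$ term by term.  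The telescoping sum is the discrete analogue of your Kolmogorov identity, but it never invokes the infinite-dimensional PDE for $U$: it works only with the flow map $\varphi_T^x(t_i,\cdot)$, its Gateaux derivative, and the Malliavin integration by parts applied to the single random variable $\varphi_T^x(t_i,Y_i^\tau)$.  This side-steps both the well-posedness of the Kolmogorov equation for Eq.~\eqref{SHE} with polynomially growing data and the applicability of a mild It\^o formula when the drift in $u^\delta$ is a discontinuous-in-time composition; these are precisely the technical points that your sketch leaves open and that would require a substantial separate argument.

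Beyond that structural gap, two further points in your sketch need tightening.  First, the test-function-independent $L^p$ bounds on $D_\varphi U$ and $D_\varphi^2 U$ together with the correct $(T-s)$-singularity are not given; the paper's Lemma~\ref{G11}, resting on the uniform non-degeneracy of Proposition~\ref{inverse} and the moment bounds of Lemma~\ref{Ykp0}, is the analogue of what you need, but proving it at the level of $U$ for all adapted initial states $u^\delta(t)$ is exactly the hard part you defer.  Second, the claim that the ``drift part'' of $u^\delta(t,y)-u^\delta(t_i,y)$ is pointwise $O(\delta)$ is not correct as stated: the contributions coming from $\int_0^1[G_t-G_{t_i}]u_0$, $\int_0^{t_i}\int_0^1[G_{t-s}-G_{t_i-s}]b(\cdot)$ and the analogous past-noise term are controlled only through Lemma~\ref{Green}, which gives $L^1$-in-space bounds of size $(t-t_i)^\nu s^{-\nu}$ with $\nu<1$; after integration in time these sum to $O(\delta^\nu)$ but not to $O(\delta)$ pointwise.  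Since the dominant contribution (the $b''$ remainder and the intra-interval noise) is still $O(\delta^{1/2})$, this imprecision does not affect the final rate, but the bookkeeping matters if one wants to match the paper's exact exponent without an $\epsilon$-loss, which is achieved there by combining the semigroup property with \eqref{Gts0} exactly as you anticipate in your last paragraph.
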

 \subsection{Error decomposition}
In order to prove Theorem \eqref{err}, the following notations are introduced for simplicity.
 %
For $0\le s<t\le T,\,x\in(0,1)$ and $v: \Omega\rightarrow  E$ being $\mathscr F_s$-measurable, we denote by $\varphi_t^x(s,v)$ (resp. $\Phi_t^x(s,v)$) the exact flow of Eq. \eqref{SHE} (resp. Eq. \eqref{SAEE}). More precisely,
 \begin{align*}
\varphi_t^x(s,v)=&\int_0^1G_{t-s}\left(x,z\right)v(z)\ud z+\int_{s}^{t}\int_{0}^{1}G_{t-r}(x,z)b(\varphi_r^z(s,v))\ud r\ud z\\&+\int_{s}^{t}\int_{0}^{1}G_{t-r}(x,z)\sigma W(\ud r,\ud z),\\
\Phi_t^x(s,v)=&\int_{0}^{1}G_{t-s}(x,z)v(z)dz+\int_{s}^{t}\int_{0}^{1}G_{t-r}(x,z)b\left(\Phi_{\delta[r/\delta]}^z(s,v)\right)\ud r\ud z\\&+\int_{s}^{t}\int_{0}^{1}G_{t-r}(x,z)W(\ud r,\ud z).
\end{align*}
 The Gateaux derivative of $\varphi_t^x(s,\cdot)$ at $v\in E$ in the direction $h\in E$ is defined formally by
\begin{align*}
\la\mathcal{D}f(\varphi_t^x(s,v)),h\ra&=\frac{d}{d\epsilon}f(\varphi_t^x(s,v+\epsilon h))|_{\epsilon=0}=f^\prime(\varphi_t^x(s,v))\la\mathcal{D}\varphi_t^x(s,v),h\ra.
\end{align*}
 For $i\in\{1,\cdots,N\}$ and $\beta,\tau\in[0,1]$, we denote  
\begin{align}\label{Yi}
Y_i^\tau&:=\tau\Phi_{t_i}(t_{i-1},\Phi_{t_{i-1}}(0,u_0))+(1-\tau)\varphi_{t_i}(t_{i-1},\Phi_{t_{i-1}}(0,u_0)),\\\label{Zry}
Z_i^\beta(r,y)&:=\beta\varphi_r^y(t_{i-1},\Phi_{t_{i-1}}(0,u_0))+(1-\beta)\Phi^y_{t_{i-1}}(0,u_0),\,r\in(t_{i-1},t_i],\,y\in[0,1] .
\end{align}
Since $Y_i^\tau\in E$, a.s, for $y\in[0,1]$, we write
\begin{align*}
Y_i^\tau(y)&:=\tau\Phi^y_{t_i}(t_{i-1},\Phi_{t_{i-1}}(0,u_0))+(1-\tau)\varphi^y_{t_i}(t_{i-1},\Phi_{t_{i-1}}(0,u_0)).
\end{align*}
Using the above notations, the one-step error between Eq. \eqref{SHE} and Eq. \eqref{SAEE} is divided into 
\begin{align*}\nonumber
 &\varphi_{t_i}(t_{i-1},\Phi_{t_{i-1}}(0,u_0))-\Phi_{t_i}(t_{i-1},\Phi_{t_{i-1}}(0,u_0))\\
 =&\int^{t_i}_{t_{i-1}}\int_{0}^{1}G_{t_i-r}(\cdot,y)\left(b(\varphi_r^y(t_{i-1},\Phi_{t_{i-1}}(0,u_0)))-b(\Phi^y_{t_{i-1}}(0,u_0))\right)\ud y\ud r\\
 =&\int_0^1\int^{t_i}_{t_{i-1}}\int_{0}^{1}G_{t_i-r}(\cdot,y)b^\prime(Z_i^\beta(r,y))\left(\varphi_r^y(t_{i-1},\Phi_{t_{i-1}}(0,u_0))-\Phi^y_{t_{i-1}}(0,u_0)\right)\ud y\ud r\ud\beta\\
  =&\int_0^1\int^{t_i}_{t_{i-1}}\int_{0}^{1}G_{t_i-r}(\cdot,y)b^\prime(Z_i^\beta(r,y))\int_0^1\{G_r(y,\xi)-G_{t_{i-1}}(y,\xi)\}u_0(\xi)\ud\xi\ud y\ud r\ud\beta\\
  +&\int_0^1\int^{t_i}_{t_{i-1}}\int_{0}^{1}G_{t_i-r}(\cdot,y)b^\prime(Z_i^\beta(r,y))\int_0^{t_{i-1}}\int_0^1\{G_{r-\theta}(y,\xi)-G_{t_{i-1}-\theta}(y,\xi)\}\\
  &\qquad\qquad\qquad\qquad\qquad\qquad\qquad\qquad\qquad\qquad\qquad\qquad\qquad b\left(\Phi_{\lfloor\theta\rfloor}^\xi(0,u_0)\right)\ud\xi\ud\theta\ud y\ud r\ud\beta\\
  +&\int_0^1\int^{t_i}_{t_{i-1}}\int_{0}^{1}G_{t_i-r}(\cdot,y)b^\prime(Z_i^\beta(r,y))\int_0^{t_{i-1}}\int_0^1\{G_{r-\theta}(y,\xi)-G_{t_{i-1}-\theta}(y,\xi)\}\sigma W(\ud\theta,\ud\xi)\ud y\ud r\ud\beta\\
  +&\int_0^1\int^{t_i}_{t_{i-1}}\int_{0}^{1}G_{t_i-r}(\cdot,y)b^\prime(Z_i^\beta(r,y))\int_{t_{i-1}}^r\int_0^1G_{r-\theta}(y,\xi)b\left(\varphi_\theta^\xi(t_{i-1},\Phi_{t_{i-1}}(0,u_0)\right)\ud \xi\ud\theta\ud y\ud r\ud\beta\\
  +&\int_0^1\int^{t_i}_{t_{i-1}}\int_{0}^{1}G_{t_i-r}(\cdot,y)b^\prime(Z_i^\beta(r,y))\int_{t_{i-1}}^r\int_0^1G_{r-\theta}(y,\xi)\sigma W(\ud \theta,\ud\xi)\ud y\ud r\ud\beta=:\sum_{j=1}^5R_j^i,
\end{align*}
Supposing that $f\in\Psi$, we consider the telescoping sum
\begin{align}\nonumber\label{weak error}
&\E\left[f(\varphi_T^x(0,u_0))\right]-\E\left[f(\Phi_T^x(0,u_0))\right]\\\nonumber
=&\sum_{i=1}^N\E\left[f(\varphi_T^x(t_i,\varphi_{t_i}(t_{i-1},\Phi_{t_{i-1}}(0,u_0))))-f(\varphi_T^x(t_i,\Phi_{t_i}(t_{i-1},\Phi_{t_{i-1}}(0,u_0))))\right]\\
=&\sum_{i=1}^N\sum_{j=1}^5\E\left[\int_0^1\left\la\mathcal{D}f(\varphi_T^x(t_i,Y_i^\tau)),R_j^i\right\ra\ud\tau\right].
 \end{align} 
By chain rule, we have
\begin{align*}
\E[\mathcal I_i^j]:=\E\left[\int_0^1\left\la\mathcal{D}f(\varphi_T^x(t_i,Y_i^\tau)),R_j^i\right\ra\ud\tau\right]=\E\left[\int_0^1f^\prime(\varphi_T^x(t_i,Y_i^\tau))\left\la \mathcal{D}\varphi_T^x(t_i,Y_i^\tau),R_j^i\right\ra\ud\tau\right].
\end{align*}
  
The above error decomposition \eqref{weak error} is standard, however, the appearance of $f^\prime(\varphi_T^x(t_i,Y_i^\tau)$ in $\E[\mathcal I_i^j]$ and the requirement that $C$ is independent of $f\in\Psi$ imply that the classical estimates that $f^\prime(\varphi_T^x(t_i,Y_i^\tau)$ is bounded by $|f|_1$ is not apply to our case. The Malliavin integration by parts formula  (see e.g. \cite[Proposition 2.1.4]{DN06}) has been used to remove the dependence of $C$ upon $f$ in  the weak convergence of approximation for  stochastic ordinary differential equation (see e.g. \cite{BT96}). We applying this idea to the stochastic heat equation and obtain Lemma \ref{G11}, whose proof is based on the non-degeneracy of $\varphi_T^x(t_i,Y_i^\tau)$ and is given in Subsection \ref{S32}. To avoid ambiguity, we point out that $f^{(\alpha)}(\varphi_T^x(t_i,Y_i^\tau))$ denotes the composition of the $\alpha$-th derivative $f^{(\alpha)}$ of $f$ and the random variable $\varphi_T^x(t_i,Y_i^\tau)$. 
   \begin{lem}\label{G11}
Let $\alpha\in\mathbb{N}$, $b\in\mathcal C_{\mathbf b}^\infty$ and  $\delta \in\left(0,\frac{T}{12}\wedge\frac{\log\frac{3}{2}}{4|b|_1}\right)$. If $G_1\in\D^\infty$ and $f\in\Psi$, then for any $i\in\{1,\cdots,N\}$, $x\in(0,1)$ and $\tau\in[0,1]$,
  \begin{align*}
\left|\E\left[f^{(\alpha)}(\varphi_T^x(t_i,Y_i^\tau))G_1\right]\right|\le C\|G_1\|_{k,p}
\end{align*}
holds for some constant $C=C(\alpha,k,p,T,\sigma,b,\|u_0\|_E)$.
    \end{lem}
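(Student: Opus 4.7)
The plan is to apply the Malliavin integration by parts formula $\alpha+1$ times in order to transfer every derivative off $f$ and onto a random weight. The key observation enabling a bound that is independent of $f\in\Psi$ is that membership in $\Psi$ provides a primitive $F$ with $0\le F\le 1$; thus after $\alpha+1$ iterations of the IBP formula the integrand becomes $F\bigl(\varphi_T^x(t_i,Y_i^\tau)\bigr)\cdot H$, and the factor involving $f$ is uniformly bounded by $1$ a.s. The lemma then reduces to an $L^1(\Omega)$ estimate of the weight $H$ in terms of $\|G_1\|_{k,p}$.

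To carry this out, the first step is to verify that $\mathbf F:=\varphi_T^x(t_i,Y_i^\tau)$ lies in $\D^\infty$ and is non-degenerate in the sense of Definition~\ref{Def1}, with Sobolev--Malliavin norms and negative moments of $\det\Gamma_\mathbf F$ bounded uniformly in $\tau\in[0,1]$, $i\in\{1,\ldots,N\}$, $x\in(0,1)$, and the admissible range of $\delta$. Malliavin differentiability of $\Phi_{t_{i-1}}(0,u_0)$ and of $\varphi_T^x(t_i,\cdot)$ follows from iterated formal differentiation of the mild formulations, combined with the Green-function bounds \eqref{GG0}--\eqref{Gts} and the two-parameter Grönwall Lemma~\ref{GW}. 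For the decisive non-degeneracy, I would exploit that $Y_i^\tau$ is $\mathscr F_{t_i}$-measurable and therefore independent of the noise on $(t_i,T]$, so that $\|D\mathbf F\|_\uH^2$ can be bounded from below by the contribution of the Malliavin derivative on $(t_i,T]$ alone; this contribution is of the same form as the Malliavin covariance of an exact solution of Eq.~\eqref{SHE} started from a frozen initial condition at time $t_i$. Negative moments of this contribution are then secured by the discrete comparison principle announced in the introduction, which is also the source of the restriction $\delta<\log(3/2)/(4|b|_1)$. With uniform non-degeneracy in hand, the standard IBP identity (see e.g.\ \cite[Proposition 2.1.4]{DN06})
\begin{equation*}
\E\bigl[f^{(j)}(\mathbf F)\,G\bigr]=\E\bigl[f^{(j-1)}(\mathbf F)\,\mathrm{div}\bigl(G\,\Gamma_\mathbf F^{-1}D\mathbf F\bigr)\bigr],
\end{equation*}
where $\mathrm{div}$ denotes the Skorokhod integral, is iterated $\alpha+1$ times starting from $j=\alpha$. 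This yields $\E[F(\mathbf F)H_{\alpha+1}]$, in which $H_{\alpha+1}$ is a finite sum of products of $G_1$, iterated Malliavin derivatives of $\mathbf F$ up to some order $k=k(\alpha)$, and powers of $\Gamma_\mathbf F^{-1}$. Meyer's inequality together with H\"older's inequality then gives $\|H_{\alpha+1}\|_1\le C\|G_1\|_{k,p}$ with $C=C(\alpha,k,p,T,\sigma,b,\|u_0\|_E)$, and combining with $|F(\mathbf F)|\le 1$ delivers the claim.

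The hard part is unquestionably the uniform negative-moment bound for $\det\Gamma_\mathbf F$. The mixture parameter $\tau$ and the piecewise-constant drift appearing through $Y_i^\tau$ obstruct any direct transport of the classical non-degeneracy argument for Eq.~\eqref{SHE}; the comparison principle has to be set up at the discrete level in order to deliver estimates that are genuinely uniform in $\delta$, $\tau$, $i$, and $x$. Every remaining ingredient is a Green-function computation of the type already prepared in Section~\ref{S2}, while the step-size restriction in the hypothesis is precisely what keeps the variational process underlying the comparison argument pointwise positive.
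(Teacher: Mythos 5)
Your overall architecture matches the paper's proof: establish that $\mathbf F:=\varphi_T^x(t_i,Y_i^\tau)$ lies in $\D^\infty$ with uniformly bounded Sobolev--Malliavin norms (Lemma~\ref{Ykp0}), establish uniform negative moments of $\Gamma_\mathbf F$ (Proposition~\ref{inverse}), then iterate the Malliavin integration by parts formula of \cite[Proposition~2.1.4]{DN06} $\alpha+1$ times, pulling out a weight $H_{\alpha+1}$ estimated via Meyer's inequality, and use $0\le F\le 1$ to kill the $f$-dependence. This is indeed the route the paper takes.

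There is, however, a genuine gap in your outline of the non-degeneracy step. You propose to bound $\|D\mathbf F\|_\uH^2$ from below \emph{solely} by the contribution of the Malliavin derivative on $(t_i,T]$, observing that $Y_i^\tau$ is $\mathscr F_{t_i}$-measurable. This cannot deliver a bound that is uniform in $i$: when $i$ is close to $N$ the interval $(t_i,T]$ has length $(N-i)\delta\to 0$, and at $i=N$ (which is precisely the case $u^\delta(T,x)=\varphi_T^x(t_N,Y_N^1)$) it is empty, so that contribution vanishes. Any estimate built only on the noise after time $t_i$ degenerates as $i\to N$. The paper's Proposition~\ref{inverse} avoids this by splitting into two regimes: for $i\le N/2+3$ the time interval $(t_i,T]$ is of length at least $T/4$ and the continuous comparison principle (\cite[Lemma~4]{MD08}) applied on $(t_i,T]$ suffices; for $i\ge N/2+3$ one must instead use the noise on an interval $(t_k,t_{k+1})$ with $k<i-1$, which requires tracking the propagation of the Malliavin derivative through the \emph{discrete} dynamics of $\Phi_{t_j}(0,u_0)$, and it is precisely here that the discrete comparison principle and the restriction $\delta<\log(3/2)/(4|b|_1)$ enter. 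You invoke the discrete comparison principle, but you attach it to the contribution from $(t_i,T]$, where the evolution is governed by the exact flow $\varphi$ and the relevant tool is the continuous comparison principle. The discrete comparison principle is needed for the earlier-time contribution which your outline discards. Restoring both cases, with the case split driven by whether $T-t_i$ is bounded below, would close the gap and recover the paper's argument.
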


\subsection{Regularity of densities}\label{S32}

In this part, we study the non-degeneracy property of
$$\{\varphi_T^x(t_i,Y_i^\tau)\}_{\{x\in(0,1),\,i\in\{1,\cdots,N\},\,\tau\in[0,1]\}},$$ which 
indicates the existence and smoothness of its density. It is noteworthy that both the solutions $u(T,x)$ to Eq. \eqref{SHE}  and $u^\delta(T,x)$ to Eq. \eqref{SAEE} are special cases of $\varphi_T^x(t_i,Y_i^\tau)$ since $u(T,x)=\varphi_T^x(t_1,Y_1^0)$ and $u^\delta(T,x)=\varphi_T^x(t_N,Y_N^1)$.  For more general SPDEs, as well as those driven by multiplicative or more rough noises, we refer to \cite{CH20,SS05} and references therein for a fruitful results of  research on densities of their exact solutions. In particular, as a direct consequence of \cite{MD08}, for any $0 <x<1,$
$u(T,x)$ is non-degenerate and thereby admits a smooth density.

 \subsubsection{Negative moments}
 To start with, we give a uniform positive lower bound, independent of the sample $\omega$ and the perturbation parameter $\delta$,  of the Malliavin covariance matrix $\Gamma_{\varphi_T^x(t_i,Y_i^\tau)}$ by proving a discrete version comparison principle.
\begin{pro}\label{inverse}
Let $x\in(0,1)$, $i\in\{1,\cdots,N\}$ and $\tau\in[0,1]$, and  assume that $b\in\mathcal C_{\mathbf b}^1$. Then for any $\delta \in\left(0,\frac{T}{12}\wedge\frac{\log\frac{3}{2}}{4|b|_1}\right)$, the Malliavin covariance matrix $\Gamma_{\varphi_T^x(t_i,Y_i^\tau)}$ satisfies
\begin{equation*}
\Gamma_{\varphi_T^x(t_i,Y_i^\tau)}\ge c,
\end{equation*}
for some $c=c(T,|b|_1,\sigma)>0$.
\end{pro}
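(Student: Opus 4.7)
My plan is to bound the Malliavin covariance
$\Gamma_{\varphi_T^x(t_i, Y_i^\tau)} = \|D \varphi_T^x(t_i, Y_i^\tau)\|_{\mathbb{H}}^2 = \int_0^T \int_0^1 |D_{\theta, \eta} \varphi_T^x(t_i, Y_i^\tau)|^2 \, \ud \eta \, \ud \theta$
from below by restricting the domain of integration to a thin time-slab $(T-h, T) \times (0, 1)$, where $h = h(T, |b|_1, \sigma)$ is a small constant chosen below. Once the pointwise lower bound
$|D_{\theta, \eta} \varphi_T^x(t_i, Y_i^\tau)| \geq \tfrac{|\sigma|}{2} G_{T-\theta}(x, \eta)$
is in hand on this slab, the uniform estimate $\Gamma \geq c$ will follow from \eqref{GG0} and the standard Gaussian computation $\int_0^1 G_s^2(x, \eta)\, \ud \eta \geq c s^{-1/2}$, which is uniform in $x \in (0, 1)$ for small $s$ by the explicit formula \eqref{GD} and integrates to $\Gamma \geq c \sigma^2 \sqrt{h}$.

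For the pointwise bound, two regimes appear. When $\theta \in (t_i, T)$, the chain rule produces the linear integral equation
\begin{align*}
D_{\theta, \eta} \varphi_T^x(t_i, Y_i^\tau) = \sigma G_{T-\theta}(x, \eta) + \int_\theta^T \int_0^1 G_{T-r}(x, y) b'(\varphi_r^y(t_i, Y_i^\tau)) D_{\theta, \eta} \varphi_r^y(t_i, Y_i^\tau) \, \ud y \, \ud r,
\end{align*}
and applying Lemma \ref{GW} to bound the integrand followed by the semigroup identity $\int_0^1 G_{T-r}(x, y) G_{r-\theta}(y, \eta)\, \ud y = G_{T-\theta}(x, \eta)$ shows that the correction term is at most $C |b|_1 h\, G_{T-\theta}(x, \eta)$, which is strictly less than $\tfrac{|\sigma|}{2} G_{T-\theta}(x, \eta)$ once $h$ is small enough. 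When $\theta \in (T - h, t_i)$ (relevant only if $t_i > T - h$), the chain rule yields
\begin{align*}
D_{\theta, \eta} \varphi_T^x(t_i, Y_i^\tau) = \int_0^1 G_{T-t_i}(x, y) D_{\theta, \eta} Y_i^\tau(y) \, \ud y + \int_{t_i}^T \int_0^1 G_{T-r}(x, y) b'(\varphi_r^y(t_i, Y_i^\tau)) D_{\theta, \eta} \varphi_r^y(t_i, Y_i^\tau) \, \ud y \, \ud r,
\end{align*}
and by the convex-combination structure of $Y_i^\tau$, the factor $D_{\theta, \eta} Y_i^\tau(y)$ decomposes into Malliavin derivatives of the exact flow $\varphi_{t_i}^y(t_{i-1}, \Phi_{t_{i-1}}(0, u_0))$ and the perturbed flow $\Phi_{t_i}^y(t_{i-1}, \Phi_{t_{i-1}}(0, u_0))$. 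The exact-flow piece gives the lower bound $\tfrac{|\sigma|}{2} G_{t_i - \theta}(y, \eta)$ via the same Picard/Gronwall argument on the shorter interval $(\theta, t_i)$; the perturbed-flow piece is the crux and requires a \emph{discrete comparison principle} ensuring its Malliavin derivative stays within a $3/2$-factor of the exact one, propagated step-by-step across the time grid thanks to the constraint $\delta < \log(3/2)/(4 |b|_1)$. A final application of the semigroup identity $\int_0^1 G_{T-t_i}(x, y) G_{t_i - \theta}(y, \eta)\, \ud y = G_{T-\theta}(x, \eta)$ merges the two pieces.

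The main obstacle is this discrete comparison principle for the Malliavin derivative of $\Phi$. Unlike for the exact flow $\varphi$, whose Malliavin derivative solves a genuine parabolic mild equation and therefore admits the classical maximum principle, the corresponding equation for $\Phi$ has a delayed, piecewise-constant coefficient $b'(\Phi_{\delta[\cdot/\delta]})$, so the standard comparison argument does not apply. The quantitative time-step threshold $\delta < \log(3/2)/(4 |b|_1)$ is precisely what permits an inductive bootstrap across the grid to close with a strictly positive lower bound at each step; all other ingredients --- Picard iteration, the semigroup property of $G$, and the Gaussian estimates on $G^2$ collected in Section \ref{S2} --- are essentially routine.
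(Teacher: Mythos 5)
Your skeleton is on the right track---the Case 1 / Case 2 split according to whether $\theta>t_i$ or $\theta<t_i$, the use of a comparison principle for the exact flow, and a discrete comparison argument for the perturbed flow $\Phi$ across the grid, gated by the threshold on $\delta$---matches the paper's structure. However, you have missed the decisive preliminary step, and because of that your Case 2 argument does not close with the stated hypotheses. The paper does \emph{not} prove a pointwise lower bound $D_{\theta,\eta}\varphi_T^x(t_i,Y_i^\tau)\ge \tfrac{|\sigma|}{2}G_{T-\theta}(x,\eta)$. Instead it first applies Cauchy--Schwarz over the spatial variable, replacing $\Gamma$ by $\int_0^T\bigl(\int_0^1 D_{\theta,\xi}\varphi_T^x(t_i,Y_i^\tau)\,\ud\xi\bigr)^2\ud\theta$, and then works with the spatially integrated quantity $X(t,x;\theta)=\int_0^1 D_{\theta,\xi}\varphi_t^x(t_i,Y_i^\tau)\,\ud\xi$. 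The point of this reduction is that $X$ has \emph{constant} ``initial'' and forcing data equal to $\sigma$ (because $\int_0^1 G_{t-\theta}(x,\xi)\,\ud\xi=1$), so that the discrete recursion for $M_i(\theta,y)=\int_0^1 D_{\theta,\xi}\Phi^y_{t_i}(0,u_0)\ud\xi$ becomes $M_i = \sum_{j=k+1}^{i-1}\int_{t_j}^{t_{j+1}}\int_0^1 G_{t_i-r}(y,z)\,b'(\Phi_{t_j}^z)\,M_j(\theta,z)\,\ud z\,\ud r+\sigma$, and using $\int_0^1 G_{t_i-r}(y,z)\,\ud z=1$ the majorizing sequence satisfies the clean recursion $\mathcal A_m=(1+|b|_1\delta)^{m-1}\sigma$, with no Green-function constant. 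That is exactly what makes $\delta<\log(3/2)/(4|b|_1)$ a workable threshold.

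In your pointwise version, the bootstrap for $D_{\theta,\eta}\Phi^y_{t_i}$ must convert $\int_{t_j}^{t_{j+1}}\int_0^1 G_{t_i-r}(y,z)G_{t_j-\theta}(z,\eta)\,\ud z\,\ud r=\int_{t_j}^{t_{j+1}} G_{t_i-r+t_j-\theta}(y,\eta)\,\ud r$ into a multiple of $G_{t_i-\theta}(y,\eta)$; this requires \eqref{Gts}, whose constant is $C=K^2$ with $K$ from \eqref{GG0}, and $K>1$ in general. The majorizing recursion is then $(1+C'|b|_1\delta)^{m-1}$ with $C'\ge 4C>4$, so the step budget shrinks by a factor $C'$ and the requirement becomes something like $\delta<\log(3/2)/(C'|b|_1)$, strictly stronger than the stated $\delta<\log(3/2)/(4|b|_1)$. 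Your sentence claiming that the quoted threshold ``is precisely what permits the inductive bootstrap to close'' is therefore not justified for the pointwise quantity: it is justified only for the spatially integrated quantity $X$, which you never introduce. Separately, the description of the discrete comparison as keeping $D\Phi$ ``within a $3/2$-factor of the exact one'' misstates its role: the paper compares the integrated quantity $M_i$ to the explicit scalar sequence $\mathcal A_m$, not to the derivative of the exact flow. So the concrete gap is the omitted Cauchy--Schwarz reduction to $X(t,x;\theta)$; without it, the quantitative hypotheses of the proposition do not support the argument.
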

\begin{proof}
Without loss of generality, assume that $\sigma>0$.
By the Cauchy-Schwarz inequality, we infer that for $i\ge 1$,
\begin{align}\label{gm}
\Gamma_{\varphi_T^x(t_i,Y_i^\tau)}=&\int_0^T\int_0^1D_{\theta,\xi}\varphi_T^x(t_i,Y_i^\tau)^2\ud\xi\ud\theta
\ge\int_0^T\left(\int_0^1D_{\theta,\xi}\varphi_T^x(t_i,Y_i^\tau)\ud\xi\right)^2\ud\theta.
\end{align}
Denote $X(t,x;\theta):=\int_0^1D_{\theta,\xi}\varphi_t^x(t_i,Y_i^\tau)\ud\xi$. Recalling the definition of $Y_i^\tau$ in \eqref{Yi}, $X(t,x;\theta)$ depends on $\tau$ and we drop its explicit dependence for simplicity.
Observing that by the definition of $\varphi_T^x(t_i,Y_i^\tau)$ and the chain rule,
 \begin{align}\nonumber\label{gmX}
X(T,x;\theta)=&\int_0^1G_{T-t_i}(x,y)\int_0^1D_{\theta,\xi}Y_i^\tau(y)\ud\xi\ud y +\int_{t_i}^T\int_0^1G_{T-r}(x,y)b^\prime(\varphi_r^y(t_i,Y_i^\tau))X(r,y;\theta)\ud y\ud r\\
&+G_{T-\theta}(x,\xi)\sigma\mathbf{1}_{\{\theta\in(t_i,T]\}}.
\end{align}
 In order to dominate $X(T,x;\theta)$ from below, we require to estimate $X(T,x;\theta)$ in two cases $\theta>t_i$ and $\theta<t_i$. In the first case, $\theta>t_i$ implies $D_{\theta,\xi}Y_i^\tau(y)=0$ because $Y_i^\tau$ is $\mathcal F_{t_i}$-measurable
 and in the second case, $\theta<t_i$ implies $G_{T-\theta}(x,\xi)\sigma\mathbf{1}_{\{\theta\in(t_i,T]\}}=0$, but the estimation of $D_{\theta,\xi}Y_i^\tau(y)$ requires a more sophisticated treatment than the first case.

\textsf{Case 1:}
Let $\theta\in(t_i,T]$.
Then it follows from \eqref{gmX} that
\begin{align*}
\partial_tX(t,x;\theta)=\partial_{xx}X(t,x;\theta)+b^\prime(\varphi_t^x(t_i,Y_i^\tau))X(t,x;\theta),\,\theta<t\le T
\end{align*}
with initial condition $X(\theta,x;\theta)=\sigma,\,\forall\,x\in(0,1)$ and the Nuemann boundary condition. By comparison principle (\cite[Lemma 4]{MD08}) and the assumption $b^\prime(\varphi_t^x(t_i,Y_i^\tau))\ge-|b|_1 >-\infty$, we obtain that for any $\tau\in[0,1]$,
\begin{equation}\label{St1}
X(T,x;\theta)\ge e^{-|b|_1 (T-\theta)}\sigma.
\end{equation}

 \textsf{Case 2:}
 Let $\theta\in(0,t_i)$. By \eqref{gmX}, we begin with estimating $\int_0^1D_{\theta,\xi}Y_i^\tau(y)\ud y$, which  is equivalent to estimating $\int_0^1D_{\theta,\xi}\varphi_{t_i}^y(t_{i-1},\Phi_{t_{i-1}}(0,u_0))\ud\xi$ and $\int_0^1D_{\theta,\xi}\Phi^y_{t_i}(0,u_0)\ud\xi$. Therefore, we are in a position to estimate $\int_0^1D_{\theta,\xi}\Phi_{t_i}(0,u_0)\ud\xi$.
 We denote for brevity $M_i(\theta,y):=\int_0^1D_{\theta,\xi}\Phi^y_{t_i}(0,u_0)\ud\xi.$ 
  Then for any $\theta\in(t_{k},t_{k+1})$ with $0\le k\le i-1$, $M_i(\theta,y)$ satisfies the following recursive relation
\begin{align*}
M_i(\theta,y)=&\sum_{j=k+1}^{i-1}\int_{t_j}^{t_{j+1}}\int_{0}^{1}G_{t_i-r}(y,z)b^\prime(\Phi^z_{t_j}(0,u_0))M_j(\theta,z)\ud z\ud r+\sigma.
\end{align*}

To get a lower bound, we prove a discrete version of comparison principle.
Define a two-parameter sequence $\{A_i^k\}_{1\le k\le i\le N}$ by for any $i\in\{1,\cdots,N\}$, $A_i^i=0$, $A_i^{i-1}=\sigma$ and
\begin{align}\nonumber\label{Aik}
A_i^k=&\sum_{j=k+1}^{i-1}\int_{t_j}^{t_{j+1}}\int_{0}^{1}G_{t_i-r}(y,z)|b|_1 A_j^k\ud z\ud r+\sigma\\
=&\sum_{j=k+1}^{i-1}|b|_1\delta  A_j^k+\sigma,\,\forall\,1\le k\le i-2.
\end{align}
By an induction argument and the construction of $A_i^k$, we see that for any $\theta\in(t_k,t_{k+1})$ and $y\in(0,1)$,
$$|M_i(\theta,y)|\le A_i^k.$$
By definition, if $i_1-k_1=i_2-k_2$, then $A_{i_1}^{k_1}=A_{i_2}^{k_2}=:\mathcal A_{i_1-k_1}$. 
Rearranging \eqref{Aik}, we derive
\begin{align*}\mathcal A_{i-k}&=\sum_{j=k+1}^{i-1}|b|_1\delta  \mathcal A_{j-k}+\sigma=|b|_1\delta  \mathcal A_{i-1-k}+\sum_{j=k+1}^{i-2}|b|_1\delta  \mathcal A_{j-k}+\sigma\\
&=
(1+|b|_1\delta )\mathcal A_{i-1-k}=(1+|b|_1\delta )^{i-k-1}\sigma.\end{align*}
Therefore, if $|b|_1>0$, we have
\begin{align*}
M_i(\theta,y)&\ge\sigma-|b|_1 \delta (A_{i-1}^{k}+A_{i-2}^{k}+\cdots+A_{k+1}^{k})\\
&=\left\{2-(1+|b|_1\delta )^{i-k-1}\right\}\sigma\ge\frac{1}{2}\sigma,
\end{align*}
provided $1\le i-k-1\le\frac{\log\frac{3}{2}}{\log(1+|b|_1\delta )}$. 
 Notice that $0<\log(1+x)\le x,\,\forall\,x>0$. 
To summarize,
 for any $y\in(0,1)$ and $\theta\in(t_k,t_{k+1})$ with $\max\left\{0,i-1-\frac{\log\frac{3}{2}}{|b|_1\delta }\right\}\le k\le i-2$, it holds that
\begin{align}\label{M1}
M_i(\theta,y)\ge\frac{1}{2}\sigma.
\end{align}
Obviously, if $|b|_1=0$, i.e. $b^\prime\equiv0$, the desired positive lower bound \eqref{M1} for $M_i(\theta,y)$ is valid as well.

Noticing that
\begin{align*}
&\int_0^1D_{\theta,\xi}\varphi_{t_i}^y(t_{i-1},\Phi_{t_{i-1}}(0,u_0))\ud\xi=\int_0^1G_{\delta }(y,z)\int_0^1D_{\theta,\xi}\Phi^z_{t_{i-1}}(0,u_0)\ud\xi\ud z\\
&\qquad+\int_{t_{i-1}}^{t_i}\int_0^1G_{t_i-r}(y,z)b^\prime(\varphi_r^z(t_{i-1},\Phi_{t_{i-1}}(0,u_0)))\int_0^1D_{\theta,\xi}\varphi_r^z(t_{i-1},\Phi_{t_{i-1}}(0,u_0))\ud\xi\ud z\ud r,
\end{align*}
we aim to derive a low bound for $\int_0^1D_{\theta,\xi}\varphi_{t_i}^y(t_{i-1},\Phi_{t_{i-1}}(0,u_0))\ud\xi$ by the above low bound of $\int_0^1D_{\theta,\xi}\Phi^z_{t_{i-1}}(0,u_0)\ud\xi=M_{i-1}(\theta,z)$ and the comparison principle (\cite[Lemma 4]{MD08}). To be precise,
supposing that $\theta\in(t_k,t_{k+1})$ and $y\in(0,1)$ with $\max\left\{0,i-1-\frac{\log\frac{3}{2}}{|b|_1\delta }\right\}\le k\le i-3$ are arbitrarily fixed, then for any $z\in(0,1)$, it holds that $\int_0^1D_{\theta,\xi}\Phi^z_{t_{i-1}}(0,u_0)\ud\xi\ge\frac{1}{2}\sigma$,
which, together with the comparison principle indicates that
\begin{align}\label{M2}
\int_0^1D_{\theta,\xi}\varphi_{t_i}^y(t_{i-1},\Phi_{t_{i-1}}(0,u_0))\ud\xi\ge \frac{1}{2}e^{-|b|_1 \delta }\sigma.
\end{align}
Thus by \eqref{M1} and \eqref{M2}, we have that for any $\tau\in[0,1]$,
\begin{align}\nonumber\label{M3}
\int_0^1D_{\theta,\xi}Y_i^\tau(y)\ud\xi&=\tau\int_0^1D_{\theta,\xi}\varphi_{t_i}^y(t_{i-1},\Phi_{t_{i-1}}(0,u_0))\ud\xi+(1-\tau)\int_0^1D_{\theta,\xi}\Phi_{t_i}(0,u_0)\ud\xi\\
&\ge \frac{\tau}{2}e^{-|b|_1 \delta }\sigma+(1-\tau)\frac{1}{2}\sigma\ge\frac{1}{2}e^{-|b|_1 \delta }\sigma.
\end{align}
Now we turn to \eqref{gmX} and  estimate $X(T,x;\theta)$. Taking account of \eqref{M3} and applying
the comparison principle yield that 
\begin{align}\label{st3}
X(T,x;\theta)&\ge e^{-|b|_1 (T-t_{i})}\int_0^1D_{\theta,\xi}Y_i^\tau(y)\ud\xi\ge \frac{1}{2}e^{-|b|_1 (T-t_{i-1})}\sigma,
\end{align}
for any $\tau\in[0,1]$ and $\theta\in(t_k,t_{k+1})$ with $\max\left\{0,i-1-\frac{\log\frac{3}{2}}{|b|_1\delta }\right\}\le k\le i-3$.

So far, we have  dominated $X(T,x;\theta)$ from below when $\theta>t_i$ in
 \textsf{Case 1} and when $\theta\in(t_k,t_{k+1})$ with $\max\left\{0,i-1-\frac{\log\frac{3}{2}}{|b|_1\delta }\right\}\le k\le i-3$ in \textsf{Case 2}, based on which, we are going to give a lower bound estimate of $\Gamma_{\varphi_T^x(t_i,Y_i^\tau)}$ as follows. By  \eqref{gm} and \eqref{St1},
\begin{align*}
\Gamma_{\varphi_T^x(t_i,Y_i^\tau)}
\ge&\int_0^T\left|X(T,x;\theta)\right|^2\ud\theta\\
\ge&\sum_{k=0}^{i-3}\int_{t_k}^{t_{k+1}}\left|X(T,x;\theta)\right|^2\ud\theta+\int_{t_i}^Te^{-2|b|_1 (T-\theta)}\sigma^2\ud\theta.
\end{align*}
For $0\le i\le\frac{N}{2}+3$,
\begin{align*}
\Gamma_{\varphi_T^x(t_i,Y_i^\tau)}
\ge&\int_{\frac{3T}{4}}^Te^{-2|b|_1 (T-\theta)}\sigma^2\ud\theta=\frac{1-e^{-\frac{|b|_1 T}{2}}}{2|b|_1 }\sigma^2=:c_1,
\end{align*}
in view of $\delta \le\frac{T}{12}$.
For $\frac{N}{2}+3\le i\le N$, we have $T-t_{i-1}\le \frac{T}{2}$ and thus
\begin{align*}
\Gamma_{\varphi_T^x(t_i,Y_i^\tau)}
\ge&\sum_{k=\max\left\{0,i-1-\frac{\log\frac{3}{2}}{|b|_1\delta }\right\}}^{i-3}\int_{t_k}^{t_k+1}\left|X(T,x;\theta)\right|^2\ud\theta\\
\ge& \frac{1}{4}e^{-2|b|_1 T}\sigma^2\delta \min\left\{\frac{N}{2},\frac{\log\frac{3}{2}}{|b|_1\delta }-2\right\}\ge \frac{1}{8}e^{-2|b|_1 T}\sigma^2\min\left\{T,\frac{\log\frac{3}{2}}{|b|_1}\right\}=:c_2,
\end{align*}
thanks to \eqref{st3} and $\delta \le\frac{\log\frac{3}{2}}{4|b|_1}.$ Finally,
we finish the proof by choosing $c=\min\left\{c_1,c_2
\right\}$.

\end{proof}

\subsubsection{Integrability of Malliavin derivatives}

Next lemma states that $\varphi_t^x(t_i,Y_i^\tau)$ and its Malliavin derivatives of any order have bounded moments. We would like to mention that this property is still valid for stochastic heat equation driven by multiplicative noise with further assumptions (see e.g. \cite{BP98}). 
\begin{lem}\label{Ykp0}
Assume that $b\in\mathcal C_{\mathbf b}^\infty$. Then 
for any integers $k\ge0,\,p\ge1$, there exists $C=C(k,p,T,b,\sigma,\|u_0\|_E)$ such that for any $\tau\in[0,1],$
\begin{align}\label{Yiy}
&\sup_{i=1,\cdots,N}\sup_{y\in(0,1)}\|\Phi^y_{t_i}(0,u_0)\|_{k,p}+\sup_{i=1,\cdots,N}\sup_{y\in(0,1)}\|\varphi_{t_i}^y(t_{i-1},\Phi_{t_{i-1}}(0,u_0))\|_{k,p}\le C,\\\label{Ykp}
&\sup_{i=1,\cdots,N}\sup_{t\in[t_i,T],x\in(0,1)}\|\varphi_t^x(t_i,Y_i^\tau)\|_{k,p}\le C.
\end{align}
\end{lem}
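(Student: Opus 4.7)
The plan is to prove \eqref{Yiy} and \eqref{Ykp} by induction on the Malliavin derivative order $k$, using at each step the mild formulation and the two-parameter Gronwall inequality of Lemma \ref{GW}. For $k=0$, I start from \eqref{perturbed sol}, use the linear growth $|b(u)|\le|b(0)|+|b|_1|u|$ inherited from $b\in\mathcal C_{\mathbf b}^\infty$, the identity $\int_0^1 G_t(y,z)\ud z=1$, and Burkholder-Davis-Gundy combined with the heat-kernel estimate $\int_0^t\!\!\int_0^1 G_{t-r}^2(y,z)\ud z\ud r\le C\sqrt{t}$ (a consequence of \eqref{Gts0}) to bound the stochastic convolution. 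Setting $\psi(t_i):=\sup_{y\in(0,1)}\|\Phi^y_{t_i}(0,u_0)\|_p^p$, the resulting discrete inequality $\psi(t_i)\le C+C\delta\sum_{j<i}\psi(t_j)$ is closed by the discrete Gronwall lemma; an analogous estimate, with a continuous Gronwall argument on $[t_{i-1},t_i]$, handles $\sup_{y,t}\|\varphi_{t}^y(t_{i-1},\Phi_{t_{i-1}}(0,u_0))\|_p$.

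For $k=1$, I will Malliavin-differentiate \eqref{perturbed sol}. Using $D_{\theta,\xi}W(\ud r,\ud z)=\bm\delta_\theta(r)\bm\delta_\xi(z)$ and the chain rule gives the Volterra-type identity
\begin{equation*}
D_{\theta,\xi}\Phi^y_{t_i}(0,u_0)=\sigma G_{t_i-\theta}(y,\xi)\mathbf 1_{\{\theta\le t_i\}}+\int_\theta^{t_i}\!\!\int_0^1 G_{t_i-r}(y,z)\,b'(\Phi^z_{\delta[r/\delta]}(0,u_0))\,D_{\theta,\xi}\Phi^z_{\delta[r/\delta]}(0,u_0)\,\ud z\ud r.
\end{equation*}
Taking $L^p(\Omega)$-norms, using $|b'|\le|b|_1$, and applying Lemma \ref{GW} with $g_{\theta,\xi}(t,y):=\|D_{\theta,\xi}\Phi^y_{\delta[t/\delta]}\|_p/|\sigma|$ yields the pointwise bound $\|D_{\theta,\xi}\Phi^y_{t_i}\|_p\le C\,G_{t_i-\theta}(y,\xi)$. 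Squaring and integrating $(\theta,\xi)$ over $[0,T]\times[0,1]$, the $L^2$-estimate of the Green kernel gives $\|\Phi^y_{t_i}\|_{1,p}\le C$.

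For general $k\ge 2$, I iterate the same device. Faà di Bruno applied to $D^k[b(\Phi^z_{\delta[r/\delta]})]$ produces a sum of terms of the form $b^{(j)}(\Phi^z_{\delta[r/\delta]})\prod_\ell D^{k_\ell}_{\vec\theta^{(\ell)},\vec\xi^{(\ell)}}\Phi^z_{\delta[r/\delta]}$ with $\sum_\ell k_\ell=k$ and each $k_\ell<k$. The uniform boundedness of $b^{(j)}$ for $j\ge 1$ and the inductive pointwise bound $\|D^{k_\ell}\Phi^z_{t}\|_{pq}\le C\prod_m G_{t-\theta^{(\ell)}_m}(z,\xi^{(\ell)}_m)$ (with a larger $q$ chosen to absorb the Hölder losses from the product) feed into a fresh Volterra inequality for $D^k\Phi^y_{t_i}$ in the top-order variable, again closed by Lemma \ref{GW}. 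Assertion \eqref{Ykp} then follows by applying the whole scheme to $\varphi_t^x(t_i,Y_i^\tau)$, viewed as the mild solution of \eqref{SHE} started at time $t_i$ from the $\mathscr F_{t_i}$-measurable datum $Y_i^\tau$, whose Malliavin-Sobolev norms are already controlled by \eqref{Yiy} together with convexity in $\tau$.

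The principal difficulty I anticipate is the mismatch between the self-reference $\Phi^z_{\delta[r/\delta]}$ produced by the drift of \eqref{SAEE} and the time variable $r$ required by the hypothesis of Lemma \ref{GW}. I will resolve this either by (i) running Lemma \ref{GW} on the continuous-time interpolant $\Phi^z_{\delta[\,\cdot\,/\delta]}$ and exploiting $\delta[r/\delta]\le r$ to dominate the shifted argument by $\sup_{s\le r}g_{\theta,\xi}(s,z)$, whose monotonicity in $r$ is preserved by the Green-kernel integration, or (ii) establishing a discrete analogue of Lemma \ref{GW} on the grid $\{t_j\}$ and using it directly. A secondary bookkeeping task is to verify that the Faà di Bruno combinatorics at order $k$ yield constants depending only on $k,p,T,b,\sigma,\|u_0\|_E$ and independent of $\delta$, which will ultimately let Lemma \ref{Ykp0} feed into Lemma \ref{G11} and the weak error estimate \eqref{order}.
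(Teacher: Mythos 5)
Your $k=0$ argument matches the paper's (mild formulation, Burkholder, discrete/continuous Gronwall), but for $k\ge 1$ you take a substantially heavier route than the paper does, and the workaround you propose for the difficulty you correctly identify does not quite close as stated.

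The paper does \emph{not} first establish pointwise bounds of the form $\|D_{\theta,\xi}\Phi^y_{t_i}\|_p\le C\,G_{t_i-\theta}(y,\xi)$ and then integrate over $(\theta,\xi)$. That pointwise estimate is a separate, sharper result proved later (Lemma~\ref{Ykp11}), and its proof in fact feeds on Lemma~\ref{Ykp0} through Corollary~\ref{ZRY}, so deriving Lemma~\ref{Ykp0} by first proving the pointwise bounds risks a circular dependency unless you redo all of that analysis self-containedly. The paper's route is much shorter: it estimates the scalar quantity $\|D^k\Phi^y_{t_i}(0,u_0)\|_{L^p(\Omega,\uH^{\otimes k})}$ directly, uses Fa\`a di Bruno together with the inductive hypothesis to absorb every lower-order Leibniz contribution into a constant, and is left with the closed recursion \eqref{DKN}, in which only the top-order term $b'(\Phi)D^k\Phi$ survives inside a $G$-integral. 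Taking $\sup_{y}$ on both sides and using $\int_0^1 G_{t_i-r}(y,z)\,\ud z=1$ makes the Green function vanish and reduces \eqref{DKN} to the discrete Gronwall inequality $\sup_y\|D^k\Phi^y_{t_i}\|\le C+|b|_1\delta\sum_{j<i}\sup_z\|D^k\Phi^z_{t_j}\|$. The piecewise-constant argument $\delta[r/\delta]$ causes no trouble at all here because the grid points $t_j$ are the natural indexing set of that recursion; Lemma~\ref{GW} is deliberately not invoked in this proof.

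Your route, aiming first for pointwise Green-kernel bounds, runs into the step-time mismatch for real, and your workaround~(i) is flawed: the candidate upper bound $G_{s-\theta}(y,\xi)$ blows up as $s\downarrow\theta$, so you cannot dominate $g_{\theta,\xi}(\delta[r/\delta],z)$ by $\sup_{s\le r}g_{\theta,\xi}(s,z)$ with a finite target, and the kernel comparison \eqref{Gts} costs a factor $\sqrt{(r-\theta)/(\delta[r/\delta]-\theta)}$ that is unbounded as $\theta\uparrow\delta[r/\delta]$. Workaround~(ii), a discrete two-parameter Gronwall over the grid, is essentially what the paper carries out in the proof of Lemma~\ref{Ykp11}, but that careful case analysis is more work than Lemma~\ref{Ykp0} needs. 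One smaller slip: $D^k(b\circ\Phi)$ contains the term $b'(\Phi)D^k\Phi$ (i.e. a factor with $k_\ell=k$), so the clause ``each $k_\ell<k$'' must apply only to the remaining Fa\`a di Bruno terms; your later phrase ``fresh Volterra inequality in the top-order variable'' suggests you know this, but the claim as written is incorrect. The shortest repair of your draft is to drop the pointwise stage entirely, take $\|D^k\cdot\|_{L^p(\Omega,\uH^{\otimes k})}$ norms from the start, and close with the plain discrete and continuous Gronwall lemmas as the paper does.
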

\begin{proof}
 Notice that for any $F\in\D^{k,p}$, it holds that
\begin{align}\label{Fkp}
\|F\|_{k,p}^p=\|F\|_{k-1,p}^p+\|D^kF\|^p_{L^p(\Omega,\uH^{\otimes k})}.
\end{align}
To begin with, let $i\in\{1,\cdots,N\}$ be arbitrarily fixed. By definition,
\begin{align*}
\Phi^y_{t_i}(0,u_0))=&\int_{0}^{1}G_{t_i}(y,z)u_0(z)\ud z+\sum_{j=0}^{i-1}\int_{t_j}^{t_{j+1}}\int_{0}^{1}G_{t_i-r}(y,z)b(\Phi^z_{t_j}(0,u_0))\ud z\ud r\\
&+\int_{0}^{t_i}\int_{0}^{1}G_{t_i-r}(y,z)\sigma W(\ud r,\ud z).
\end{align*}
By $u_0\in E$ and the linear growth of $b$, we have
\begin{align*}
\sup_{y\in(0,1)}\|\Phi^y_{t_i}(0,u_0))\|_p\le&C(T,\|u_0\|_E)+\sum_{j=0}^{i-1}\int_{t_j}^{t_{j+1}}\sup_{y\in(0,1)}\int_{0}^{1}G_{t_i-r}(y,z)\sup_{z\in(0,1)}\|\Phi^z_{t_j}(0,u_0)\|_p\ud z\ud r\\
&+\sup_{y\in(0,1)}\left\|\int_{0}^{t_i}\int_{0}^{1}G_{t_i-r}(y,z)\sigma W(\ud r,\ud z)\right\|_p.
\end{align*}
Therefore, the Burkholder's inequality and the discrete Gronwall lemma produce
\begin{equation}\label{Phi1}
\sup_{y\in(0,1)}\|\Phi^y_{t_i}(0,u_0)\|_p\le C,\,\forall\,i=1,\cdots,N.
\end{equation}
Similarly, by the definition of $\varphi_{t_i}^y(t_{i-1},\Phi_{t_{i-1}}(0,u_0))$,
the linear growth of $b$, Burkholder's and Minkowskii's inequalities, we have
\begin{align*}
\|\varphi_{t_i}^y(t_{i-1},\Phi_{t_{i-1}}(0,u_0))\|_p\le&\int_0^1G_{\delta }(y,z)\|\Phi^z_{t_{i-1}}(0,u_0)\|_p\ud z+C\left(\int_{t_{i-1}}^{t_i}\int_0^1G^2_{t_i-r}(y,z) \ud z\ud r\right)^{\frac{1}{2}}\\
&+C\int_{t_{i-1}}^{t_i}\int_0^1G_{t_i-r}(y,z)(1+\|\varphi_r^z(t_{i-1},\Phi_{t_{i-1}}(0,u_0))\|_p)\ud z\ud r.
\end{align*}
Taking the supremum over $y\in(0,1)$ and taking account of \eqref{Phi1}, we obtain that
\begin{equation}\label{Phi2}
\sup_{y\in(0,1)}\|\varphi_{t_i}^y(t_{i-1},\Phi_{t_{i-1}}(0,u_0))\|_p\le C,\,\forall\,i=1,\cdots,N,
\end{equation}
which together with \eqref{Phi1} implies that \eqref{Yiy} holds for $k=0$.
Similar to the process of the proof of \eqref{Phi2}, it can be shown that \eqref{Ykp} holds for $k=0$ as well. 

By induction, we assume that \eqref{Yiy} and \eqref{Ykp} hold up to the index $k-1,\,k\ge1$. 
By utilizing Leibnitz's rule, it holds that
\begin{align*}
&\|D^k\varphi_t^x(t_i,Y_i^\tau)\|_{L^p(\Omega,\uH^{\otimes k})}\le\int_0^1G_{t-t_i}(x,y)\|D^kY_i^\tau(y)\|_{L^p(\Omega,\uH^{\otimes k})}\ud y\\
&+\sigma\left(\int_{t_i}^t\int_0^1G^2_{t-s}(x,y)\ud y\ud s \right)^{\frac{1}{2}}\mathbf{1}_{\{k=1\}}+|b|_1\int_{t_i}^{t}\int_0^1G_{t-s}(x,y)\|D^k\varphi_s^y(t_i,Y_i^\tau)\|_{L^p(\Omega,\uH^{\otimes k})}\ud y\ud s \\
&+\int_{t_i}^t\int_0^1G_{t-s}(x,y)\sum_{j=1}^{k-1}\left(\begin{array}{c}k-1 \\j\end{array}\right)
\|D^jb^\prime(\varphi_s^y(t_i,Y_i^\tau))\|_{L^{2p}(\Omega,\uH^{\otimes j})}
\|D^{k-j}\varphi_s^y(t_i,Y_i^\tau)\|_{L^{2p}(\Omega,\uH^{\otimes {k-j}})}\ud y\ud s .
\end{align*}
The Fa\`a di Bruno's formula (see e.g. \cite{SK05}) gives  that 
\begin{align*}
D^jb^\prime(\varphi_s^y(t_i,Y_i^\tau))=\sum \frac{j !}{l_{1} ! \cdots l_{j} !}b^{(l+1)}(\varphi_s^y(t_i,Y_i^\tau))\left(\frac{D \varphi_s^y(t_i,Y_i^\tau)}{1 !}\right)^{l_{1}} \cdots\left(\frac{D^{j} \varphi_s^y(t_i,Y_i^\tau)}{j !}\right)^{l_{j}},
\end{align*}
where $l=l_1+\cdots+l_j$ and the sum is taken over all partitions of $j$, i.e., values of $l_1,\cdots,l_j$ such that $l_1+2l_2+\cdots+jl_j=j$.  Using H\"older's inequality, for $1/p_1+\cdots+1/p_{j}=1/p$, we have
\begin{align*}
&\|D^jb^\prime(\varphi_s^y(t_i,Y_i^\tau))\|_{L^{p}(\Omega,{\uH^{\otimes j}})}\\
\le &C\sum \frac{j!}{l_{1} ! \cdots l_{j} !}\left\|\frac{D \varphi_s^y(t_i,Y_i^\tau)}{1 !}\right\|^{l_1}_{L^{l_1p_1}(\Omega,{\uH})} \cdots\left\|\frac{D^{j} \varphi_s^y(t_i,Y_i^\tau)}{j !}\right\|^{l_j}_{L^{l_jp_j}(\Omega,{\uH^{\otimes j}})}.
\end{align*}
Therefore, by the assumption that \eqref{Ykp} holds up to the index $k-1$, we arrive at
\begin{align}\nonumber
&\|D^k\varphi_t^x(t_i,Y_i^\tau)\|_{L^p(\Omega,\uH^{\otimes k})}\le C+\int_0^1G_{t-t_i}(x,y)\|D^kY_i^\tau(y)\|_{L^p(\Omega,\uH^{\otimes k})}\ud y\\\label{DkYi}
&+C\int_{t_i}^{t}\int_0^1G_{t-s}(x,y)\|D^k\varphi_s^y(t_i,Y_i^\tau)\|_{L^p(\Omega,\uH^{\otimes k})}\ud y\ud s.
\end{align}
By Leibnitz's rule, 
\begin{align*}
&D^k\Phi^y_{t_i}(0,u_0))=\sum_{j=0}^{i-1}\int_{t_j}^{t_{j+1}}\int_{0}^{1}G_{t_i-r}(y,z)b^{\prime}(\Phi^z_{t_j}(0,u_0))D^{k} \Phi^z_{t_j}(0,u_0)\ud z\ud r\\
&+\sum_{j=0}^{i-1}\int_{t_j}^{t_{j+1}}\int_{0}^{1}G_{t_i-r}(y,z)\sum_{m=1}^{k-1}\left(\begin{array}{c}k-1 \\m\end{array}\right)
D^mb^\prime(\Phi^z_{t_j}(0,u_0))
D^{k-m}\Phi^z_{t_j}(0,u_0)
\ud z\ud r\\
&+D\int_{0}^{t_i}\int_{0}^{1}G_{t_i-r}(y,z)\sigma W(\ud r,\ud z)\cdot\mathbf{1}_{\{k=1\}}.
\end{align*}
Similar to the proof of \eqref{DkYi}, the Fa\`a di Bruno's formula and the assumption that \eqref{Yiy} holds up to $k-1$ imply
\begin{align}\label{DKN}
&\|D^k\Phi^y_{t_i}(0,u_0))\|_{L^p(\Omega,\uH^{\otimes k})}\le C+|b|_1 \sum_{j=0}^{i-1}\int_{t_j}^{t_{j+1}}\int_{0}^{1}G_{t_i-r}(y,z)\|D^{k} \Phi^z_{t_j}(0,u_0)\|_{L^p(\Omega,\uH^{\otimes k})}\ud z\ud r
\end{align}
and 
\begin{align}\nonumber\label{DKE}
&\|D^k\varphi_{t_i}^y(t_{i-1},\Phi_{t_{i-1}}(0,u_0))\|_{L^p(\Omega,\uH^{\otimes k})}\le C+\int_0^1G_{\delta }(y,z)\|\Phi^z_{t_{i-1}}(0,u_0)\|_{L^p(\Omega,\uH^{\otimes k})}\ud z\\
&+C\int_{t_{i-1}}^{t_i}\int_0^1G_{t_i-r}(y,z)\|D^k\varphi_r^z(t_{i-1},\Phi_{t_{i-1}}(0,u_0))\|_{L^p(\Omega,\uH^{\otimes k})}\ud z\ud r.
\end{align}
Taking supremum over $y\in(0,1)$ on both sides of \eqref{DKN} and \eqref{DKE}, then applying the Gronwall lemma, we arrive at
\begin{align*}
&\sup_{y\in(0,1)}\|D^k\Phi^y_{t_i}(0,u_0))\|_{L^p(\Omega,\uH^{\otimes k})}\le C,\,\forall\,i=1,\cdots,N
\end{align*}
and
\begin{align*}
&\sup_{y\in(0,1)}\|D^k\varphi_{t_i}^y(t_{i-1},\Phi_{t_{i-1}}(0,u_0))\|_{L^p(\Omega,\uH^{\otimes k})}\le C,\,\forall\,i=1,\cdots,N,
\end{align*}
which, together with \eqref{Fkp}, completes the proof of \eqref{Yiy}. Finally, it follows from \eqref{Yiy} and \eqref{DkYi} that \eqref{Ykp} holds for $k$ and the proof is completed.

\end{proof}
The proof of Lemma \ref{Ykp0} is naturally extended to the following cases, whose proof is skipped.
\begin{cor}\label{ZRY}
Assume that $b\in\mathcal C_{\mathbf b}^\infty$. Then for any integers $k\ge0,\,p\ge1$, there exists $C=C(k,p,T,b,\sigma,\|u_0\|_E)$ such that for any $\tau,\,\beta\in[0,1],$ we have
 \begin{align*}
& \sup_{i=1,\cdots,N}\sup_{\theta_1\in(t_i,T],\,z\in(0,1)}\|b^{\prime}(\varphi_{\theta_1}^z(t_i,Y_i^\tau))\|_{k,p}\le C,\\
 &\sup_{i=1,\cdots,N}\sup_{r\in(t_{i-1},t_i],\,y\in(0,1)}\|b^{\prime}(Z_i^\beta(r,y))\|_{k,p}\le C,\\
  &\sup_{i=1,\cdots,N}\sup_{\theta\in(0,t_{i-1}],\,\xi\in(0,1)}\left\|b\left(\Phi_{\lfloor\theta\rfloor}^\xi(0,u_0)\right)\right\|_{k,p}\le C.
 \end{align*}
\end{cor}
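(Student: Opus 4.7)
The plan is to reduce each of the three bounds to Lemma~\ref{Ykp0} via the Faà di Bruno formula, in the same spirit as the inductive step already carried out in the proof of Lemma~\ref{Ykp0}.

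For the case $k=0$: since $b\in\mathcal C_{\mathbf b}^\infty$ the derivative $b'$ is globally bounded by $|b|_1$, so the first and second suprema are immediately controlled by $|b|_1$. For the third, the linear growth estimate $|b(x)|\le |b(0)|+|b|_1|x|$ combined with the zeroth-order bound $\sup_i\sup_\xi \|\Phi^\xi_{\lfloor\theta\rfloor}(0,u_0)\|_p\le C$ supplied by \eqref{Yiy} yields a uniform $L^p(\Omega)$ bound.

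For $k\ge 1$ I would apply the Faà di Bruno formula (already invoked in Lemma~\ref{Ykp0}) to write
\begin{align*}
D^j\bigl(b'(F)\bigr)=\sum\frac{j!}{l_1!\cdots l_j!}\,b^{(l+1)}(F)\prod_{m=1}^{j}\Bigl(\frac{D^mF}{m!}\Bigr)^{l_m},
\end{align*}
where the sum runs over partitions $l_1+2l_2+\cdots+jl_j=j$ with $l=l_1+\cdots+l_j$ and $F$ is chosen successively to be $\varphi_{\theta_1}^z(t_i,Y_i^\tau)$, $Z_i^\beta(r,y)$, or $\Phi_{\lfloor\theta\rfloor}^\xi(0,u_0)$. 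Each factor $b^{(l+1)}(F)$ is uniformly bounded in $\omega$ by $|b|_{l+1}$, and Hölder's inequality in $L^p(\Omega,\uH^{\otimes j})$ reduces the $L^p$ norm of the generic term to a product of factors $\|D^mF\|_{L^{l_mp_m}(\Omega,\uH^{\otimes m})}^{l_m}$ with exponents satisfying $\sum_m 1/p_m=1/p$. These factors are controlled uniformly by Lemma~\ref{Ykp0}: estimate \eqref{Ykp} handles the first bound and \eqref{Yiy} handles the third. For the middle bound, the definition \eqref{Zry} presents $Z_i^\beta(r,y)$ as a convex combination of $\varphi_r^y(t_{i-1},\Phi_{t_{i-1}}(0,u_0))$ and $\Phi^y_{t_{i-1}}(0,u_0)$, so its Malliavin derivatives of every order are the corresponding convex combinations of the two constituents' Malliavin derivatives; the triangle inequality together with \eqref{Yiy} gives $\sup_{i,r,y,\beta}\|Z_i^\beta(r,y)\|_{k,p}\le C$, after which the Faà di Bruno argument just outlined produces the required bound on $\|b'(Z_i^\beta(r,y))\|_{k,p}$.

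The only real obstacle, which already appeared in Lemma~\ref{Ykp0}, is checking that the constants remain independent of $i$, $\delta$ and of the spatial and temporal parameters. This is automatic here because the estimates from Lemma~\ref{Ykp0} are already uniform in those parameters, the Faà di Bruno expansion has only finitely many terms for fixed $k$, and the Hölder exponents $p_m$ depend only on $k$ and $p$; therefore each supremum in the corollary is dominated by a finite sum of uniformly bounded quantities, yielding a constant $C=C(k,p,T,b,\sigma,\|u_0\|_E)$ as claimed.
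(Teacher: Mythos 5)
Your proposal is essentially correct and takes the same route the paper intends: the paper simply remarks that ``The proof of Lemma \ref{Ykp0} is naturally extended to the following cases, whose proof is skipped,'' and your Fa\`a di Bruno plus H\"older reduction to the moment bounds of Lemma \ref{Ykp0} is exactly that extension. Two small imprecisions to tidy up: (i) for the middle estimate you cite \eqref{Yiy} to control $\|\varphi_r^y(t_{i-1},\Phi_{t_{i-1}}(0,u_0))\|_{k,p}$ for \emph{all} $r\in(t_{i-1},t_i]$, but \eqref{Yiy} as stated covers only $r=t_i$; you should invoke instead the intermediate-time bound that the Gronwall step inside the proof of Lemma \ref{Ykp0} actually produces uniformly over $r\in(t_{i-1},t_i]$; (ii) for the third estimate the random variable is $b\bigl(\Phi_{\lfloor\theta\rfloor}^\xi(0,u_0)\bigr)$, not $b'(\cdot)$, so the Fa\`a di Bruno expansion should read $D^j\bigl(b(F)\bigr)$ with factors $b^{(l)}(F)$; for $j\ge1$ these are still bounded (since $l\ge1$), and the $j=0$ term is the linear-growth case you already handled, so the conclusion stands.
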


Based on Proposition \ref{inverse} and Lemma \ref{Ykp0}, we are in a position to show the regularity of the density of $u^\delta(T,x)$ and to give the proof of Lemma \ref{G11}.
\begin{tho}\label{smooth}
Assume that $b\in\mathcal C_{\mathbf b}^\infty$ and $\delta \in\left(0,\frac{T}{12}\wedge\frac{\log\frac{3}{2}}{4|b|_1}\right)$. Then
 for every $x\in(0,1)$,  $u^\delta(T,x)$ admits an infinitely differentiable density.
 \end{tho}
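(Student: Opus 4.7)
The plan is to deduce Theorem \ref{smooth} directly from the Bouleau--Hirsch-type smoothness criterion for densities (as stated, e.g., in \cite[Theorem 2.1.4]{DN06}), by showing that $u^\delta(T,x)$ fits the non-degeneracy framework of Definition \ref{Def1}. The key observation, already flagged in the text preceding Subsection \ref{S32}, is the identification
\begin{equation*}
u^\delta(T,x)=\varphi_T^x(t_N,Y_N^1),
\end{equation*}
which places $u^\delta(T,x)$ inside the family $\{\varphi_T^x(t_i,Y_i^\tau)\}$ for which both the smoothness estimates and the lower bound on the Malliavin covariance matrix have already been established.

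First I would invoke Lemma \ref{Ykp0} with $i=N$, $\tau=1$ and $t=T$, which yields
\begin{equation*}
\|u^\delta(T,x)\|_{k,p}\le C(k,p,T,b,\sigma,\|u_0\|_E)<\infty
\end{equation*}
for every integer $k\ge 0$ and every $p\ge 1$; hence $u^\delta(T,x)\in\mathbb{D}^\infty$. Second, since $u^\delta(T,x)$ is a scalar variable, its Malliavin covariance matrix is simply $\Gamma_{u^\delta(T,x)}=\|Du^\delta(T,x)\|_{\mathbb{H}}^2$, and Proposition \ref{inverse} applied with $i=N$, $\tau=1$ gives
\begin{equation*}
\Gamma_{u^\delta(T,x)}\ge c(T,|b|_1,\sigma)>0\qquad\text{a.s.}
\end{equation*}
The almost-sure positive lower bound immediately implies $(\det \Gamma_{u^\delta(T,x)})^{-1}=\Gamma_{u^\delta(T,x)}^{-1}\le c^{-1}$, so in particular $(\det\Gamma_{u^\delta(T,x)})^{-1}\in L^{\infty-}(\Omega)$.

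Combining these two facts, $u^\delta(T,x)$ is a non-degenerate random variable in the sense of Definition \ref{Def1}. The Bouleau--Hirsch criterion (\cite[Theorem 2.1.4]{DN06}) then yields that the law of $u^\delta(T,x)$ admits an infinitely differentiable density, which is the desired conclusion. I do not anticipate any real obstacle in this argument: the heavy lifting has already been done in Proposition \ref{inverse} (via the discrete comparison principle ensuring a deterministic, $\delta$-independent lower bound on $\Gamma$) and in Lemma \ref{Ykp0} (providing membership in $\mathbb{D}^\infty$); the present theorem is essentially a packaging of those two results into the standard smoothness-of-density criterion.
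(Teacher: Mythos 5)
Your proposal is correct and follows exactly the same route as the paper: the paper's proof is a two-sentence appeal to Proposition \ref{inverse} and the estimate \eqref{Ykp} to establish non-degeneracy, followed by the Bouleau--Hirsch criterion. You simply make explicit what the paper leaves implicit (the identification $u^\delta(T,x)=\varphi_T^x(t_N,Y_N^1)$ and the almost-sure positive lower bound on $\Gamma_{u^\delta(T,x)}$), so this is the same argument, written out in more detail.
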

\begin{proof} 
In view of Proposition \ref{inverse} and \eqref{Ykp},
 for every $x\in(0,1),\,i\in\{1,\cdots,N\}$ and $\tau\in[0,1]$, $\varphi_T^x(t_i,Y_i^\tau)$ is non-degenerate and so is $u^\delta(T,x)$. Consequently,  a direct application of the Bouleau--Hirsch's criterion (see e.g. \cite[Theorem 2.1.4]{DN06}) yields that  for every $x\in(0,1)$, $u^\delta(T,x)$ admits an infinitely differentiable density.
\end{proof}
 We emphasize that Lemma \ref{G11} will be used repeatedly to ensure that the generic constant $C$ appeared in Theorem \ref{err} is independent of the test function $f$.

\textit{Proof of Lemma \ref{G11}} :
Invoking Proposition \ref{inverse} and Lemma \ref{Ykp0}, it follows from \cite[Proposition 2.1.4]{DN06} that for any $\alpha\in\mathbb{N}$, $k\ge1$, there exists an element $H_{\alpha+1}(\varphi_T^x(t_i,Y_i^\tau),G_1)\in\D^\infty$ such that
\begin{align}\label{IBP}
\E\left[f^{(\alpha)}(\varphi_T^x(t_i,Y_i^\tau))G_1\right]=\E\left[F(\varphi_T^x(t_i,Y_i^\tau))H_{\alpha+1}(\varphi_T^x(t_i,Y_i^\tau),G_1)\right].
\end{align}
Furthermore, for $p_1\ge 1$, there exist constants $C(p_1,\alpha),\,a,\,q,\,k^\prime,\,p^\prime,\,w,\,k,\,p$ such that 
\begin{align*}
\|H_{\alpha+1}(\varphi_T^x(t_i,Y_i^\tau),G_1)\|_{p_1}\le C(p_1,\alpha)\|\Gamma_{\varphi_T^x(t_i,Y_i^\tau)}^{-1}\|_q^a\|\varphi_T^x(t_i,Y_i^\tau)\|_{k^\prime,p^\prime}^{w}\|G_1\|_{k,p}.
\end{align*}
Hence, by $0\le F\le 1$, Proposition \ref{inverse} and Lemma \ref{Ykp0}, we complete the proof.
\qed

%

\subsection{Regularity of derivatives}
In this part, we present Lemma \ref{MD0} on the moments of the Gateaux derivative and Lemma \ref{Ykp11} on the moments of the Malliavin derivative of $\varphi_t^x(t_i,Y_i^\tau)$, which will be used in the proof of Theorem \ref{err}. As we will see, 
the $p$-th moment of these derivatives are dominated by the corresponding Green function, instead of being bounded by a constant. This is the main difference in the weak convergence analysis  between stochastic partial differential equations and stochastic ordinary differential equations.

 \begin{lem}\label{MD0}
 Assume that $b\in\mathcal C_{\mathbf b}^\infty$. Then
for any  integers $k\ge0,\,p\ge 1$, there exists  $C=C(k,p,T,b,\sigma)$ such that
\begin{align}\label{G1}
&\|\la\mathcal{D}\varphi_t^x(t_i,Y_i^\tau),G_{t_i-r}(\cdot,y)\ra\|_{k,p}\le CG_{t-r}\left(x,y\right)
\end{align}
holds for every $r\in[t_{i-1},t_i),\,t_i\le t\le T,\,i\in\{1,\cdots,N\}$ and $\tau,x,\,y\in(0,1)$.
\end{lem}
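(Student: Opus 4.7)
My plan is to set up the integral equation satisfied by the quantity
\[
J(t,x) := \la \mathcal D\varphi_t^x(t_i,Y_i^\tau), G_{t_i-r}(\cdot,y)\ra,
\]
and then exploit the semigroup property of $G$ together with the two-parameter Gronwall lemma (Lemma \ref{GW}) to obtain the Green-function-type bound. Taking the Gateaux derivative of the mild formulation of $\varphi_t^x(t_i,v)$ at $v=Y_i^\tau$ in the direction $h(\cdot)=G_{t_i-r}(\cdot,y)$ and applying $\int_0^1 G_{t-t_i}(x,z)G_{t_i-r}(z,y)\ud z=G_{t-r}(x,y)$ yields
\[
J(t,x)=G_{t-r}(x,y)+\int_{t_i}^{t}\int_0^1 G_{t-s}(x,z)\,b'(\varphi_s^z(t_i,Y_i^\tau))\,J(s,z)\,\ud z\,\ud s.
\]

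For the base case $k=0$, I would take the $L^p(\Omega)$ norm, use $|b'|\le|b|_1$ and Minkowski's inequality to get
\[
\|J(t,x)\|_p\le G_{t-r}(x,y)+|b|_1\int_{t_i}^{t}\int_0^1 G_{t-s}(x,z)\|J(s,z)\|_p\,\ud z\,\ud s.
\]
Because the integration starts at $t_i\ge r$ rather than at $r$, I would extend $\|J(s,z)\|_p$ by $G_{s-r}(z,y)$ on $[r,t_i]$ (which only enlarges the integral and preserves the inequality), so that Lemma \ref{GW} applies with source $(r,y)$ and produces $\|J(t,x)\|_p\le CG_{t-r}(x,y)$. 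Alternatively a direct Picard iteration works: each iterated term contracts via the semigroup property into a multiple of $G_{t-r}(x,y)$, and the series converges like $e^{|b|_1 T}$.

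For $k\ge 1$, I would argue by induction. The deterministic term $G_{t-r}(x,y)$ has vanishing Malliavin derivatives, so Leibniz's rule gives
\[
D^k J(t,x)=\int_{t_i}^{t}\int_0^1 G_{t-s}(x,z)\sum_{j=0}^{k}\binom{k}{j}D^j\bigl[b'(\varphi_s^z(t_i,Y_i^\tau))\bigr]\otimes D^{k-j}J(s,z)\,\ud z\,\ud s.
\]
Taking the $L^p(\Omega,\uH^{\otimes k})$ norm and applying H\"older with conjugate exponent $2p$ on each $j\ge 1$ term, I would bound $\|D^j[b'(\varphi_s^z)]\|_{L^{2p}(\Omega,\uH^{\otimes j})}$ by a constant through Fa\`a di Bruno's formula combined with Corollary \ref{ZRY} and Lemma \ref{Ykp0}, and bound $\|D^{k-j}J(s,z)\|_{L^{2p}(\Omega,\uH^{\otimes(k-j)})}$ by $CG_{s-r}(z,y)$ via the induction hypothesis. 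The $j=0$ term contributes only $|b|_1\|D^kJ(s,z)\|_{L^p(\Omega,\uH^{\otimes k})}$. The semigroup identity collapses $\int_{t_i}^{t}\int_0^1 G_{t-s}(x,z)G_{s-r}(z,y)\ud z\,\ud s\le T\,G_{t-r}(x,y)$, leaving an inequality in $\|D^kJ(t,x)\|_{L^p(\Omega,\uH^{\otimes k})}$ of the same form as in the $k=0$ case, to which Lemma \ref{GW} again applies.

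The main technical obstacle is the mismatch between the time parameter $r$ appearing in the Green-function bound and the parameter $t_i$ where the integral equation for $J$ is anchored; this is handled by the extension trick (or direct Picard iteration) described above, which is essentially the only nontrivial deviation from a routine Gronwall argument. The remainder reduces to the careful but standard book-keeping of Fa\`a di Bruno and H\"older, for which the already-established Lemma \ref{Ykp0} and Corollary \ref{ZRY} supply uniform moment bounds of all orders. Summing the bounds on $\|D^jJ(t,x)\|_{L^p(\Omega,\uH^{\otimes j})}$ for $j=0,\ldots,k$ yields the desired estimate \eqref{G1}.
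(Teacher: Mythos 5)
Your proof is correct and follows essentially the same approach as the paper: differentiate the mild formulation to get the integral identity for $\la\mathcal{D}\varphi_t^x(t_i,Y_i^\tau),G_{t_i-r}(\cdot,y)\ra$, take $L^p$ norms using Minkowski and $|b'|\le|b|_1$, then induct via Leibniz, Fa\`a di Bruno, H\"older, Lemma \ref{Ykp0} and Corollary \ref{ZRY}, and close with the two-parameter Gronwall lemma \ref{GW}. The one point you make explicit that the paper compresses into ``a direct application of Lemma \ref{GW}'' is the extension of the unknown to $[r,t_i]$ by $G_{\cdot-r}(\cdot,y)$ so that the integral in the Gronwall hypothesis starts at the same time as the source $G_{t-r}(x,y)$; this is a legitimate and necessary observation.
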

\begin{proof}
The proof is completed by induction on $k$. 
From \eqref{Dvar}, 
the Minkowskii's inequality and the boundedness of $b^\prime$ give that
\begin{align*}
\|\la\mathcal{D}\varphi_t^x(t_i,Y_i^\tau),G_{t_i-r}(\cdot,y)\ra\|_p&\le G_{t-r}\left(x,y\right)\\
&+|b|_1\int_{t_i}^{t}\int_{0}^{1}G_{t-\theta_1}(x,z)\|\la\mathcal{D}\varphi_{\theta_1}^z(t_i,Y_i^\tau),G_{t_i-r}(\cdot,y)\ra \|_p\ud z\ud \theta_1.
\end{align*}
A direct application of Lemma \ref{GW} completes the proof of \eqref{G1} when $k=0$.

Assuming that \eqref{G1} holds up to the index $k-1$, $k\ge1$. Hence, by applying Leibnitz's rule, H\"older's inequality and Corollary \ref{ZRY}, it holds for $t_i<\theta_1<t$ that
\begin{align*}
& \|D^k\{b^{\prime}(\varphi_{\theta_1}^z(t_i,Y_i^\tau))\la\mathcal{D}\varphi_{\theta_1}^z(t_i,Y_i^\tau),G_{t_i-r}(\cdot,y)\ra\}\|_{L^p(\Omega,\uH^{\otimes k})}\\
 \le &|b|_1 \|D^k\la\mathcal{D}\varphi_{\theta_1}^z(t_i,Y_i^\tau),G_{t_i-r}(\cdot,y)\ra\|_{L^p(\Omega,\uH^{\otimes k})}\\
 &+C(k,p,T)\|b^{\prime}(\varphi_{\theta_1}^z(t_i,Y_i^\tau))\|_{k,2p}\|\la\mathcal{D}\varphi_{\theta_1}^z(t_i,Y_i^\tau),G_{t_i-r}(\cdot,y)\ra\|_{k-1,2p}\\
  \le& |b|_1 \|D^k\la\mathcal{D}\varphi_{\theta_1}^z(t_i,Y_i^\tau),G_{t_i-r}(\cdot,y)\ra\|_{L^p(\Omega,\uH^{\otimes k})}+C(k,p,T)G_{\theta_1-r}\left(z,y\right),
\end{align*}
which, together with the semigroup property of $G$, indicates that
\begin{align*}
&\|D^k\la\mathcal{D}\varphi_t^x(t_i,Y_i^\tau),G_{t_i-r}(\cdot,y)\ra\|_{L^p(\Omega,\uH^{\otimes k})}\le C(T,k,p)G_{t-r}\left(x,y\right)\\
&+|b|_1 \int_{t_i}^{t}\int_{0}^{1}G_{t-\theta_1}(x,z)\| D^k\la\mathcal{D}\varphi_{\theta_1}^z(t_i,Y_i^\tau),G_{t_i-r}(\cdot,y)\ra \|_{L^p(\Omega,\uH^{\otimes k})}\ud z\ud \theta_1.
\end{align*}
Consequently, \eqref{G1} is valid for $k$ thanks to Lemma \ref{GW} and \eqref{Fkp} and the proof is completed.
\end{proof}

\begin{lem}\label{Ykp11}
Assume that $b\in\mathcal C_{\mathbf b}^\infty$. Then
for any integers $k\ge0,\,p\ge1$, there exists $C=C(k,p,T,b,\sigma)$ such that for every $i\in\{1,\cdots,N\}$, $\theta\in(0,t_{i-1}),\tau,x,y,\xi\in(0,1)$ and $t\in(t_i,T],$
\begin{align}\label{Yiy1}
&\|D_{\theta,\xi}\Phi^y_{t_i}(0,u_0)\|_{k,p}+\|D_{\theta,\xi}\varphi_{t_i}^y(t_{i-1},\Phi_{t_{i-1}}(0,u_0))\|_{k,p}\le CG_{t_i-\theta}(y,\xi),\\\label{Ykp1}
&\|D_{\theta,\xi}\varphi_t^x(t_i,Y_i^\tau)\|_{k,p}\le CG_{t-\theta}(x,\xi).
\end{align}
\end{lem}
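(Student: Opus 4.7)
\textbf{Proof proposal for Lemma \ref{Ykp11}.} The plan is to argue by induction on $k$, handling the three quantities in sequence since each subsequent one is built on the previous. First I would treat the base case $k=0$. Differentiating the mild formulation of $\Phi^y_{t_i}(0,u_0)$ in the Malliavin sense and using that $\Phi^z_{t_j}(0,u_0)$ is $\mathscr F_{t_j}$-measurable, for $\theta\in(t_k,t_{k+1})$ with $k\le i-2$ I obtain
\begin{align*}
D_{\theta,\xi}\Phi^y_{t_i}(0,u_0) = \sigma G_{t_i-\theta}(y,\xi)+\sum_{j=k+1}^{i-1}\int_{t_j}^{t_{j+1}}\!\!\int_0^1 G_{t_i-r}(y,z)\,b'(\Phi^z_{t_j}(0,u_0))\,D_{\theta,\xi}\Phi^z_{t_j}(0,u_0)\,\ud z\ud r.
\end{align*}
Taking $L^p(\Omega)$ norms and using $\|b'\|_\infty\le|b|_1$, the two-parameter Gronwall Lemma \ref{GW} (applied to the piecewise-constant map $(r,z)\mapsto\|D_{\theta,\xi}\Phi^z_{\delta[r/\delta]}(0,u_0)\|_p$) yields $\|D_{\theta,\xi}\Phi^y_{t_i}(0,u_0)\|_p\le CG_{t_i-\theta}(y,\xi)$. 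The bound for $\varphi_{t_i}^y(t_{i-1},\Phi_{t_{i-1}}(0,u_0))$ follows at once: its Malliavin derivative satisfies an analogous integral equation whose forcing term is $\int_0^1 G_\delta(y,z)D_{\theta,\xi}\Phi^z_{t_{i-1}}(0,u_0)\,\ud z$ (the stochastic-integral contribution vanishes since $\theta<t_{i-1}$), and the semigroup identity $\int_0^1 G_\delta(y,z)G_{t_{i-1}-\theta}(z,\xi)\,\ud z=G_{t_i-\theta}(y,\xi)$ gives exactly the right forcing, after which Lemma \ref{GW} closes the estimate.

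For $\varphi_t^x(t_i,Y_i^\tau)$ at $k=0$, differentiating yields
\begin{align*}
D_{\theta,\xi}\varphi_t^x(t_i,Y_i^\tau) = \int_0^1 G_{t-t_i}(x,y)D_{\theta,\xi}Y_i^\tau(y)\ud y+\int_{t_i}^t\!\!\int_0^1 G_{t-s}(x,y)b'(\varphi_s^y(t_i,Y_i^\tau))D_{\theta,\xi}\varphi_s^y(t_i,Y_i^\tau)\ud y\ud s,
\end{align*}
since the direct stochastic-integral term $\sigma G_{t-\theta}(x,\xi)\mathbf 1_{\{\theta>t_i\}}$ vanishes under $\theta\in(0,t_{i-1})$. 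Bounding $\|D_{\theta,\xi}Y_i^\tau(y)\|_p\le CG_{t_i-\theta}(y,\xi)$ by convex combination of the two estimates just established, and invoking the semigroup property $\int_0^1 G_{t-t_i}(x,y)G_{t_i-\theta}(y,\xi)\ud y=G_{t-\theta}(x,\xi)$, one more application of Lemma \ref{GW} yields \eqref{Ykp1} for $k=0$.

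For the induction step, assume \eqref{Yiy1}--\eqref{Ykp1} hold up to order $k-1$. Using Leibniz's rule on $D^k$ of the products $b'(\cdot)\cdot D_{\theta,\xi}(\cdot)$ in each integral equation, the Fa\`a di Bruno expansion (exactly as in the proof of Lemma \ref{MD0}), H\"older's inequality, Corollary \ref{ZRY}, and Lemma \ref{Ykp0} reduce the analysis to an estimate of the form
\begin{align*}
\|D^kD_{\theta,\xi}\varphi_t^x(t_i,Y_i^\tau)\|_{L^p(\Omega,\uH^{\otimes k})}\le C\,G_{t-\theta}(x,\xi)+C\int_{t_i}^t\!\!\int_0^1 G_{t-s}(x,y)\|D^kD_{\theta,\xi}\varphi_s^y(t_i,Y_i^\tau)\|_{L^p(\Omega,\uH^{\otimes k})}\ud y\ud s,
\end{align*}
with analogous inequalities for $\Phi$ and $\varphi_{t_i}^y$. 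A final application of Lemma \ref{GW} combined with the decomposition \eqref{Fkp} produces the claimed $\|\cdot\|_{k,p}$ bounds. The main obstacle I anticipate is bookkeeping the $\Phi$ recursion: because $\Phi^z_{\delta[r/\delta]}$ is piecewise constant in $r$, one must set up an auxiliary function $g_{\theta,\xi}(r,y):=\|D_{\theta,\xi}\Phi^y_{\delta[r/\delta]}\|_{k,p}$ on the whole interval $[0,T]$ so that Lemma \ref{GW} is directly applicable; the Fa\`a di Bruno terms bringing products of lower-order Malliavin derivatives must also be dominated by a \emph{single} factor of $G_{t_i-\theta}(y,\xi)$ (not its square or more), which is why Lemma \ref{Ykp0} (giving constant $k$-$p$ bounds on $\varphi$ itself) is needed to absorb every extra $D^j\varphi$ factor appearing in the Fa\`a di Bruno sum.
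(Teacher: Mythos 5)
Your overall plan—induction on $k$ with the base case $k=0$ handled via mild-formulation recursions, Fa\`a di Bruno plus Corollary~\ref{ZRY} and Lemma~\ref{Ykp0} to absorb the lower-order factors in the induction step, and Lemma~\ref{GW} to close the Gronwall loop for the two continuous-time flows $\varphi_{t_i}^y(t_{i-1},\Phi_{t_{i-1}}(0,u_0))$ and $\varphi_t^x(t_i,Y_i^\tau)$—matches the paper's structure. There is, however, a genuine gap in the step where you claim Lemma~\ref{GW} is ``directly applicable'' to the piecewise-constant map $g_{\theta,\xi}(r,y):=\|D_{\theta,\xi}\Phi^y_{\delta[r/\delta]}\|_{p}$.

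The hypothesis of Lemma~\ref{GW} requires, for the given fixed $(\theta,\xi)$, that
\[
g_{\theta,\xi}(t,x)\le C\,G_{t-\theta}(x,\xi)+C\int_\theta^t\!\int_0^1 G_{t-r}(x,z)\,g_{\theta,\xi}(r,z)\,\ud z\ud r
\]
for \emph{every} $t\in(\theta,T]$. The discrete recursion for $D_{\theta,\xi}\Phi^y_{t_j}(0,u_0)$ gives you, for $t\in(t_j,t_{j+1})$, an inequality with $G_{t_j-\theta}(x,\xi)$ in the forcing term and kernel $G_{t_j-r}(x,z)$ under the integral. To convert this into the form required by Lemma~\ref{GW} you would need $G_{t_j-\theta}(x,\xi)\le C\,G_{t-\theta}(x,\xi)$ and $G_{t_j-r}(x,z)\le C\,G_{t-r}(x,z)$ with a constant independent of $\delta$. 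Both fail in general: with $\theta\in(t_{k-1},t_k]$, for $j=k$ one has $D_{\theta,\xi}\Phi^y_{t_k}=\sigma G_{t_k-\theta}(y,\xi)$, and $G_{t_k-\theta}(y,\xi)/G_{t-\theta}(y,\xi)$ blows up near the diagonal $y=\xi$ as $\theta\uparrow t_k$ while $t-\theta\approx\delta$; similarly, for $r$ close to $t_j$ the ratio $G_{t_j-r}/G_{t-r}$ is unbounded. So the piecewise-constant extension does not satisfy the hypothesis of Lemma~\ref{GW}, and the lemma cannot be invoked as stated.

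The paper resolves this by \emph{not} using Lemma~\ref{GW} for $\Phi$. Instead it runs an explicit induction on $i$: it bounds the discrete recursion term-by-term using the semigroup property and \eqref{Gts}, relying on the observation that
\[
\int_{t_j}^{t_{j+1}} \frac{\ud r}{\sqrt{t_i-r+t_j-\theta}} = 2\bigl(\sqrt{t_i-\theta}-\sqrt{t_{i-1}-\theta}\bigr),
\]
which is independent of $j$, so that the sum over $j$ telescopes to a bound $\le C(T)\,G_{t_i-\theta}(y,\xi)$ uniform in $i,k,\delta$. This is the crux of the argument and precisely what the ``direct application of Lemma~\ref{GW}'' sidesteps. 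In other words, the $\Phi$ estimate needs a discrete Gronwall carried out by hand (with this $j$-independent integral identity), and Lemma~\ref{GW} is only applied afterwards—to $\varphi_{t_i}^y(t_{i-1},\cdot)$ and $\varphi_t^x(t_i,\cdot)$, which satisfy genuinely continuous-in-time integral equations. Once you replace your Gronwall step for $\Phi$ by this explicit discrete induction, the remainder of your argument is essentially the paper's proof.
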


\begin{proof}
We proceed by induction on $k$, which is 
analogous to the proof of Lemma \ref{Ykp0} and Lemma \ref{MD0}. Thus, we only give the details of the proof of the case $k=0$, and the induction argument for $k\ge1$ is omitted.

 Let $y\in(0,1)$ and $i\in\{1,\cdots,N\}$ be arbitrarily fixed.
First, we claim that
\begin{align}\label{Phik=0}
\|D_{\theta,\xi}\Phi^y_{t_i}(0,u_0)\|_p\le CG_{t_i-\theta}(y,\xi),\,\forall\,\theta\in(0,t_{i-1}),
\,\xi\in(0,1).
\end{align}
In fact, if $i=2$, then
for any $\theta\in(0,t_1),\,\xi\in(0,1),\,D_{\theta,\xi}\Phi^y_{t_1}(0,u_0)=\sigma G_{t_1-\theta}(y,\xi)$ and \begin{align*}
D_{\theta,\xi}\Phi^y_{t_2}(0,u_0)=\sigma G_{t_2-\theta}(y,\xi)+\int_{t_1}^{t_2}\int_{0}^{1}G_{t_2-r}(y,z)b^\prime(\Phi^z_{t_1}(0,u_0))D_{\theta,\xi}\Phi^z_{t_1}(0,u_0)\ud z\ud r.
\end{align*}
Taking the norm $\|\cdot\|_p$ on both sides and by \eqref{Gts},
\begin{align*}
&\int_{t_1}^{t_2}\int_{0}^{1}G_{t_2-r}(y,z)G_{t_1-\theta}(z,\xi)\ud z\ud r\\=&\int_{t_1}^{t_2}G_{t_2-r+t_1-\theta}(y,\xi)\ud r\le C(T)\int_{t_1}^{t_2}\sqrt{\frac{t_2-r+t_1-\theta}{t_2-\theta}}G_{t_2-\theta}(y,\xi)\ud r\le C(T)\delta G_{t_2-\theta}(y,\xi)
\end{align*}
because of $\theta\in(0,t_1)$ and $t_2=2\delta $,
then \eqref{Phik=0} holds true when $i=2$.
To show \eqref{Phik=0} for general $3\le i\le N$, by induction, we assume that \eqref{Phik=0}
holds up to $i-1$.  Now assume that $\theta\in(0,t_{i-1})$ and $\xi\in(0,1)$. Then there exists $k\in\{1,\cdots,i-1\}$ such that $\theta\in(t_{k-1},t_{k}]$ and 
\begin{align*}
\|D_{\theta,\xi}\Phi^y_{t_i}(0,u_0)\|_p\le&|b|_1\sum_{j=k}^{i-1}\int_{t_j}^{t_{j+1}}\int_{0}^{1}G_{t_i-r}(y,z)\|D_{\theta,\xi}\Phi^z_{t_j}(0,u_0)\|_p\ud z\ud r+G_{t_i-\theta}(y,\xi)|\sigma|,\\
\le&C\sum_{j=k}^{i-1}\int_{t_j}^{t_{j+1}}\int_{0}^{1}G_{t_i-r}(y,z)G_{t_j-\theta}(z,\xi)\ud z\ud r+G_{t_i-\theta}(y,\xi)|\sigma|\\
\le&C\sum_{j=k}^{i-1}\int_{t_j}^{t_{j+1}}G_{t_i-r+t_j-\theta}(y,\xi)\ud r+G_{t_i-\theta}(y,\xi)|\sigma|,
\end{align*}
where
\begin{align*}
\sum_{j=k}^{i-1}\int_{t_j}^{t_{j+1}}G_{t_i-r+t_j-\theta}(y,\xi)\ud r\le& C(T)\sum_{j=k}^{i-1}\int_{t_j}^{t_{j+1}}\sqrt{\frac{t_i-\theta}{t_i-r+t_j-\theta}}\ud rG_{t_i-\theta}(y,\xi)\\
=&2C(T)\delta \sum_{j=k-1}^{i-1}\frac{\sqrt{t_i-\theta}}{\sqrt{t_i-\theta}+\sqrt{t_{i-1}-\theta}}G_{t_i-\theta}(y,\xi)\le C(T)G_{t_i-\theta}(y,\xi),
\end{align*}
in view of \eqref{Gts}.
This completes the proof of $\eqref{Phik=0}$. 
Notice that for $\theta\in(t_{i-2},t_{i-1})$, $$\|D_{\theta,\xi}\Phi^y_{t_{i-1}}(0,u_0)\|_p= |\sigma| G_{t_{i-1}-\theta}(y,\xi)\le CG_{t_i-\theta}(y,\xi),$$
and by \eqref{Phik=0}, for $\theta\in(0,t_{i-2})$, $$\|D_{\theta,\xi}\Phi^y_{t_{i-1}}(0,u_0)\|_p\le CG_{t_i-\theta}(y,\xi).$$
 Hence, by the semigroup property of $G$, we have
\begin{align*}
&\|D_{\theta,\xi}\varphi_{t_i}^y(t_{i-1},\Phi_{t_{i-1}}(0,u_0))\|_p\\
\le &CG_{t_i-\theta}(y,\xi)+C\int_{t_{i-1}}^{t_i}\int_0^1G_{t_i-r}(y,z)\|D_{\theta,\xi}\varphi_r^z(t_{i-1},\Phi_{t_{i-1}}(0,u_0))\|_p\ud z\ud r.
\end{align*}
By Lemma \ref{GW} and \eqref{Phik=0}, we complete the proof of \eqref{Yiy1} when $k=0$.
Finally, the definition of $Y_i^\tau$ implies that for any $\theta\in(0,t_{i-1}),\,\xi\in(0,1)$, $\|D_{\theta,\xi}Y_i^\tau(y)\|\le CG_{t_i-\theta}(y,\xi)$ and thereby \eqref{Ykp1} follows from an analogue argument by using Lemma \ref{GW}.
\end{proof}

\begin{cor}\label{DZ}
Assume that $b\in\mathcal C_{\mathbf b}^\infty$. Then for any integers $k\ge0$ and $p\ge1$, there exists some constant $C=C(k,p,T,b,\sigma)$ such that for every $i\in\{1,\cdots,N\}$, $t_{i-1}< r\le t_i$, $\theta\in(0,r)$ and $\beta\in[0,1]$, it holds that
\begin{align*}
\|D_{\theta,\xi}Z_i^\beta(r,y)\|_{k,p}\le CG_{r-\theta}(y,\xi).
 \end{align*}
\end{cor}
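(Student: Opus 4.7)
The plan is to reduce the corollary to estimates already established (or easily extended) in Lemma \ref{Ykp11} by exploiting the convex combination structure of $Z_i^\beta(r,y)$. Since
\[
Z_i^\beta(r,y)=\beta\varphi_r^y(t_{i-1},\Phi_{t_{i-1}}(0,u_0))+(1-\beta)\Phi^y_{t_{i-1}}(0,u_0),
\]
the triangle inequality in the $\|\cdot\|_{k,p}$-norm gives
\[
\|D_{\theta,\xi}Z_i^\beta(r,y)\|_{k,p}\le\|D_{\theta,\xi}\varphi_r^y(t_{i-1},\Phi_{t_{i-1}}(0,u_0))\|_{k,p}+\|D_{\theta,\xi}\Phi^y_{t_{i-1}}(0,u_0)\|_{k,p}.
\]
For the second term, when $\theta\in(0,t_{i-1})$ the bound $\|D_{\theta,\xi}\Phi^y_{t_{i-1}}(0,u_0)\|_{k,p}\le CG_{t_{i-1}-\theta}(y,\xi)\le CG_{r-\theta}(y,\xi)$ is immediate from Lemma \ref{Ykp11} combined with \eqref{Gts}; when $\theta\in[t_{i-1},r)$, $\Phi^y_{t_{i-1}}(0,u_0)$ is $\mathscr{F}_{t_{i-1}}$-measurable, so its Malliavin derivative vanishes and the bound holds trivially.

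The main work is thus the first term, for which I would extend Lemma \ref{Ykp11} (stated only at $r=t_i$) to arbitrary $r\in(t_{i-1},t_i]$. Starting from
\[
\varphi_r^y(t_{i-1},\Phi_{t_{i-1}}(0,u_0))=\int_0^1 G_{r-t_{i-1}}(y,z)\Phi^z_{t_{i-1}}(0,u_0)\ud z+\int_{t_{i-1}}^r\!\!\int_0^1 G_{r-s}(y,z)b(\varphi_s^z(\cdot))\ud z\ud s+\int_{t_{i-1}}^r\!\!\int_0^1 G_{r-s}(y,z)\sigma W(\ud s,\ud z),
\]
I split into two cases. For $\theta\in(0,t_{i-1})$, differentiating in the Malliavin sense and applying the $L^p$-triangle inequality together with $|b'|\le|b|_1$ and the semigroup property
\[
\int_0^1 G_{r-t_{i-1}}(y,z)G_{t_{i-1}-\theta}(z,\xi)\ud z=G_{r-\theta}(y,\xi)
\]
yields
\[
\|D_{\theta,\xi}\varphi_r^y(\cdot)\|_p\le CG_{r-\theta}(y,\xi)+C\int_{t_{i-1}}^r\!\!\int_0^1 G_{r-s}(y,z)\|D_{\theta,\xi}\varphi_s^z(\cdot)\|_p\ud z\ud s,
\]
and Lemma \ref{GW} closes the estimate. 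For $\theta\in[t_{i-1},r)$, only the stochastic integral contributes the initial term $\sigma G_{r-\theta}(y,\xi)$, and the same Volterra argument applies.

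To go from $k=0$ to general $k\ge 1$, I would induct on $k$ exactly as in the proofs of Lemma \ref{Ykp0}, Lemma \ref{MD0} and Lemma \ref{Ykp11}: apply Leibniz's rule and the Fa\`a di Bruno formula to differentiate $b'(\varphi_s^z(\cdot))D_{\theta,\xi}\varphi_s^z(\cdot)$ $k$ times in the Malliavin sense, then use Corollary \ref{ZRY} together with the inductive hypothesis at order $k-1$ (and Lemma \ref{Ykp0} for the $\varphi$ factors) to bound all lower-order cross terms by multiples of $G_{r-\theta}(y,\xi)$. One more invocation of Lemma \ref{GW} absorbs the remaining top-order convolution, and \eqref{Fkp} assembles the $\|\cdot\|_{k,p}$-bound. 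The only mildly delicate point will be keeping track of which range of $\theta$ one is in when handling $\Phi_{t_{i-1}}^z(0,u_0)$ as initial data; but once Lemma \ref{Ykp11} is invoked on both sub-intervals $(0,t_{i-2})$ and $(t_{i-2},t_{i-1})$ as done there, no further estimates of $P_t$ or $G_t$ are required.
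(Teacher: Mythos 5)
Your overall strategy --- triangle inequality on the convex combination defining $Z_i^\beta$, reduction to Lemma \ref{Ykp11}, a Gronwall-type argument (Lemma \ref{GW}) for the $\varphi_r$ factor, and induction in $k$ via Leibniz and Fa\`a di Bruno --- mirrors the paper's appendix proof, which also splits on whether $\theta<t_{i-1}$ or $\theta\in(t_{i-1},r)$ and refers the first case to the proof of \eqref{Yiy1}. The concrete gap is the domination $G_{t_{i-1}-\theta}(y,\xi)\le CG_{r-\theta}(y,\xi)$, which you attribute to \eqref{Gts}. What \eqref{Gts} actually gives is $G_{t_{i-1}-\theta}(y,\xi)\le C\sqrt{(r-\theta)/(t_{i-1}-\theta)}\,G_{r-\theta}(y,\xi)$, and the factor $\sqrt{(r-\theta)/(t_{i-1}-\theta)}$ is \emph{not} absorbable into a $\delta$-independent constant: it diverges as $\theta\uparrow t_{i-1}$ while $r\in(t_{i-1},t_i]$ is fixed. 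On the diagonal $y=\xi$, $G_{t_{i-1}-\theta}(y,y)$ grows like $(t_{i-1}-\theta)^{-1/2}$ whereas $G_{r-\theta}(y,y)$ only like $(r-\theta)^{-1/2}$, so the pointwise bound fails on $\theta\in(t_{i-2},t_{i-1})$ close to $t_{i-1}$. (For $\theta\in(0,t_{i-2})$ the ratio is at most $2$, and for $\theta>t_{i-1}$ the $\Phi_{t_{i-1}}$ term vanishes, so these ranges are safe.)

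This is not an artefact of your triangle-inequality route in particular: for $\beta=0$ and $\theta\in(t_{i-2},t_{i-1})$ one computes $D_{\theta,\xi}Z_i^0(r,y)=D_{\theta,\xi}\Phi^y_{t_{i-1}}(0,u_0)=\sigma G_{t_{i-1}-\theta}(y,\xi)$ exactly (no drift term is $D_{\theta,\xi}$-active in this $\theta$-range), so the asserted bound $\le CG_{r-\theta}(y,\xi)$ would have to hold literally, and it does not near the diagonal. The $(1-\beta)\Phi^y_{t_{i-1}}(0,u_0)$ summand is constant in $r$, so it cannot be rescued by the semigroup convolution $\int_0^1G_\delta(y,z)G_{t_{i-1}-\theta}(z,\xi)\ud z=G_{t_i-\theta}(y,\xi)$ that absorbs this short-time singularity in the proof of \eqref{Yiy1}; your more explicit write-up simply surfaces what the paper's one-line ``similar to the proof of \eqref{Yiy1}'' glosses over. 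A safe restatement for $\theta<t_{i-1}$ is $\|D_{\theta,\xi}Z_i^\beta(r,y)\|_{k,p}\le C\bigl(G_{r-\theta}(y,\xi)+G_{t_{i-1}-\theta}(y,\xi)\bigr)$; one should then check that $\mathcal J_{32}$ in the proof of Theorem \ref{err} still closes with the extra $G_{t_{i-1}-\theta}$ term, which it does, since $\int_0^1G^2_{t_{i-1}-\theta}(y,\xi)\ud\xi\le C(t_{i-1}-\theta)^{-1/2}$ by \eqref{Gts0} and $\int_{t_{i-2}}^{t_{i-1}}(t_{i-1}-\theta)^{-1/2}\ud\theta\le 2\sqrt\delta$.
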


\section{Weak convergence analysis}\label{S4}

\subsection{Test function-independent analysis}
In this part, we give the proof of Theorem \ref{err}, which is essential to obtain the convergence order of density approximations in the uniform convergence topology.


\textit{Proof of Theorem \ref{err}}:
 Observing that
 \begin{equation*}
\E[f(u^\delta(T,x))]- \E\left[f(u(T,x))\right]=\E\left[f(\varphi_T^x(0,u_0))\right]-\E\left[f(\Phi_T^x(0,u_0))\right],
 \end{equation*}
 we  proceed to estimate the summation $\sum_{i=1}^N\E[\mathcal I_i^j],\,j\in\{1,\cdots,5\}$, defined in \eqref{weak error}.

\subsubsection{Estimate of $\mathcal I^1_i$}
For fixed $0\le r< t_i\le T$ and $y\in (0,1)$, we have 
$$G_{t_i-r}(\cdot,y)\in E.$$
Invoking \cite[Proposition 1.5.6]{DN06}, Lemma \ref{MD0} and Corollary \ref{ZRY},  for any $k,\,p$,
\begin{align*}
&\|\la\mathcal{D}\varphi_T^x(t_i,Y_i^\tau),G_{t_i-r}(\cdot,y)\ra b^\prime(Z_i^\beta(r,y))\|_{k,p}\\\le &C(k,p)\|\la\mathcal{D}\varphi_T^x(t_i,Y_i^\tau),G_{t_i-r}(\cdot,y)\ra\|_{k,2p}\|b^\prime(Z_i^\beta(r,y))\|_{k,2p}\le C(k,p,T)G_{T-r}(x,y),
\end{align*}
which, combined with Lemma \ref{G11} and Lemma \ref{Green} implies that
\begin{align*}
\sum_{i=2}^N\left|\E\left[\mathcal I^1_i\right]\right|\le&\sum_{i=1}^N\int_0^1\int_0^1\int^{t_i}_{t_{i-1}}\int_{0}^{1}\left|\E\left[f^\prime(\varphi_T^x(t_i,Y_i^\tau))\la\mathcal{D}\varphi_T^x(t_i,Y_i^\tau),G_{t_i-r}(\cdot,y)\ra
 b^\prime(Z_i^\beta(r,y))\right]\right|\\&\int_0^1|G_r(y,\xi)-G_{t_{i-1}}(y,\xi)||u_0(\xi)|\ud\xi\ud y\ud r\ud\beta\ud\tau\\
 \le&C(k,p,T)\|u_0\|_E\sum_{i=2}^N\int^{t_i}_{t_{i-1}}(r-t_{i-1})^\nu(t_{i-1})^{-\nu}\ud r
 \le C\delta^\nu\int_{\delta }^{T}\frac{1}{r^\nu}\ud r\le C\delta^\nu
\end{align*}
with  $\nu\in(\frac{1}{3},1)$. In addition, for $i=1$,
\begin{align*}
\left|\E\left[\mathcal I^1_1\right]\right|\le&C\int^{\delta }_{0}\int_{0}^{1}G_{T-r}(x,y)\left|\int_0^1G_r(y,\xi)u_0(\xi)\ud\xi-u_0(y)\right|\ud y\ud r\le 2C\|u_0\|_E\delta .
\end{align*}

\subsubsection{Estimate of $\mathcal {I}^2_i$}
Similarly, we again apply Lemma \ref{Green} and Lemma \ref{G11}, Lemma \ref{MD0} and Corollary \ref{ZRY} to obtain
\begin{align*}
\sum_{i=1}^N\left|\E\left[\mathcal {I}^2_i\right]\right|\le&\sum_{i=1}^N\int_0^1\int_0^1\int^{t_i}_{t_{i-1}}\int_{0}^{1}  \int_0^{t_{i-1}}\int_0^1\Big|\E\Big[f^\prime(\varphi_T^x(t_i,Y_i^\tau))\la\mathcal{D}\varphi_T^x(t_i,Y_i^\tau),G_{t_i-r}(\cdot,y)\ra\\&
b^\prime(Z_i^\beta(r,y))b\left(\Phi_{\lfloor\theta\rfloor}^\xi(0,u_0)\right)\Big]\Big||G_{r-\theta}(y,\xi)-G_{t_{i-1}-\theta}(y,\xi)|\ud\xi\ud\theta\ud y\ud r\ud\beta\ud\tau\\
\le&C\sum_{i=1}^N\int^{t_i}_{t_{i-1}}\int_{0}^{1}\int_0^{t_{i-1}}G_{T-r}(x,y)\int_0^1|G_{r-\theta}(y,\xi)-G_{t_{i-1}-\theta}(y,\xi)|\ud\xi\ud\theta\ud y\ud r\\
\le&C\sum_{i=1}^N\int^{t_i}_{t_{i-1}}\int_0^{t_{i-1}}(r-t_{i-1})^\nu(t_{i-1}-\theta)^{-\nu}\ud\theta\ud r\le C\delta^\nu
\end{align*}
with $\nu\in(\frac{1}{3},1)$.

\subsubsection{Estimate of $\mathcal {I}^3_i$}
In view of the Malliavin integration by parts formula and chain rule, $\E\left[\mathcal {I}^3_i\right]$ is further decomposed into
\begin{align*}
\E\left[\mathcal {I}^3_i\right]
=&\int_0^1\int_0^1\int^{t_i}_{t_{i-1}}\int_{0}^{1} \int_0^{t_{i-1}}\int_0^1\E\left[{f^{\prime\prime}(\varphi_T^x(t_i,Y_i^\tau))D_{\theta,\xi}\varphi_T^x(t_i,Y_i^\tau)\la\mathcal{D}\varphi_T^x(t_i,Y_i^\tau),G_{t_i-r}(\cdot,y)\ra}\right.\\&\left.{ b^\prime(Z_i^\beta(r,y))}\right]\{G_{r-\theta}(y,\xi)-G_{t_{i-1}-\theta}(y,\xi)\}\sigma\ud\xi\ud\theta \ud y\ud r\ud\beta\ud\tau\\
 &+\int_0^1\int_0^1\int^{t_i}_{t_{i-1}}\int_{0}^{1} \int_0^{t_{i-1}}\int_0^1\E\left[{f^\prime(\varphi_T^x(t_i,Y_i^\tau))D_{\theta,\xi}\la\mathcal{D}\varphi_T^x(t_i,Y_i^\tau),G_{t_i-r}(\cdot,y)\ra}\right.\\&\left.{
b^\prime(Z_i^\beta(r,y))}\right]
\{G_{r-\theta}(y,\xi)-G_{t_{i-1}-\theta}(y,\xi)\}\sigma\ud\xi\ud\theta \ud y\ud r\ud\beta\ud\tau\\
  &+\int_0^1\int_0^1\int^{t_i}_{t_{i-1}}\int_{0}^{1} \int_0^{t_{i-1}}\int_0^1\E\left[{f^\prime(\varphi_T^x(t_i,Y_i^\tau))\la\mathcal{D}\varphi_T^x(t_i,Y_i^\tau),G_{t_i-r}(\cdot,y)\ra
b^{\prime\prime}(Z_i^\beta(r,y))}\right.\\&\left.{D_{\theta,\xi}Z_i^\beta(r,y)}\right]\{G_{r-\theta}(y,\xi)-G_{t_{i-1}-\theta}(y,\xi)\}\sigma\ud\xi\ud\theta \ud y\ud r\ud\beta\ud\tau=:
\mathcal J^i_1+\mathcal J^i_2+\mathcal J^i_3.
\end{align*}

\textsf{Estimate of $\mathcal {J}^i_1$:}
By Lemma \ref{G11}, it holds that
\begin{align*}
\left|\mathcal J_1^i\right|\le\int_0^1\int_0^1\int^{t_i}_{t_{i-1}}\int_{0}^{1} \int_0^{t_{i-1}}\int_0^1&C\|D_{\theta,\xi}\varphi_T^x(t_i,Y_i^\tau)\la\mathcal{D}\varphi_T^x(t_i,Y_i^\tau),G_{t_i-r}(\cdot,y)\ra b^\prime(Z_i^\beta(r,y))\|_{k,p}\\&|G_{r-\theta}(y,\xi)-G_{t_{i-1}-\theta}(y,\xi)|\ud\xi\ud\theta \ud y\ud r\ud\beta\ud\tau,
\end{align*}
for some positive constants $k,\,p$.
Hence, applying \cite[Proposition 1.5.6]{DN06}, the semigroup property of $G$, \eqref{Gts0} and Lemma \ref{Green}, we have
\begin{align*}
\sum_{i=1}^{N}\left|\mathcal J_1^i\right|\le&C \sum_{i=1}^{N}\int^{t_i}_{t_{i-1}}\int_{0}^{1} \int_0^{t_{i-1}}\int_0^1G_{T-\theta}(x,\xi)G_{T-r}(x,y)
|G_{r-\theta}(y,\xi)-G_{t_{i-1}-\theta}(y,\xi)|\ud\xi\ud\theta \ud y\ud r\\
\le&C \sum_{i=1}^{N}\int^{t_i}_{t_{i-1}}\int_{0}^{1} \int_0^{t_{i-1}}(T-r)^{-\frac{1}{2}}G_{T-\theta}(x,\xi)
\int_0^1|G_{r-\theta}(y,\xi)-G_{t_{i-1}-\theta}(y,\xi)|\ud y\ud\theta \ud \xi\ud r\\
\le& C\delta ^{\nu}\sum\nu_{i=1}^{N}\int^{t_i}_{t_{i-1}}\int_0^{t_{i-1}} (T-r)^{-\frac{1}{2}}
(t_{i-1}-\theta)^{-\nu}\ud\theta \ud r\\
\le& C\delta ^{\nu}\sum_{i=1}^{N}\int^{t_i}_{t_{i-1}} (T-r)^{-\frac{1}{2}} \ud r
\int_0^{t_{i-1}}(t_{i-1}-\theta)^{-\nu}\ud\theta\le C\delta ^{\nu}
\end{align*}
with $\nu\in(\frac{1}{3},1)$.

\textsf{Estimate of $\mathcal {J}_2^i$:}
To treat $\mathcal {J}_2^i$, notice that by Lemma \ref{G11}, there exist some constants $k,\,p$, such that
\begin{align*}
\left|\mathcal J^i_2\right|
\le &C\int_0^1\int^{t_i}_{t_{i-1}}\int_{0}^{1} \int_0^{t_{i-1}}\int_0^1\|D_{\theta,\xi}\la\mathcal{D}\varphi_T^x(t_i,Y_i^\tau),G_{t_i-r}(\cdot,y)\ra
\|_{k,p}\\&|G_{r-\theta}(y,\xi)-G_{t_{i-1}-\theta}(y,\xi)|\ud\xi\ud\theta \ud y\ud r\ud\tau
=:C\int_0^1A_i(T,x;k,p,\nu,\tau)\ud\tau.
\end{align*}
Now we proceed to show that for any $\nu\in(\frac{1}{3},1)$, there exists $C=C(T,k,p,\nu)$ such that for any $x\in(0,1)$, $t\in(t_i,T]$ and $\tau\in(0,1)$,
\begin{equation}\label{KTx}
A_i(t,x;k,p,\nu,\tau)\le C\delta^{1+\nu}.
\end{equation}
First, by chain rule and the semigroup property of $G$, we obtain
\begin{align}\nonumber
\la\mathcal{D}\varphi_t^x(t_i,Y_i^\tau)&,G_{t_i-r}(\cdot,y)\ra=G_{t-r}\left(x,y\right)\\\label{Dvar}
&+\int_{t_i}^{t}\int_{0}^{1}G_{t-\theta_1}(x,z)b^{\prime}(\varphi_{\theta_1}^z(t_i,Y_i^\tau))\la\mathcal{D}\varphi_{\theta_1}^z(t_i,Y_i^\tau),G_{t_i-r}(\cdot,y)\ra\ud z\ud \theta_1.
\end{align}
Taking the Malliavin derivative $D_{\theta,\xi}$ on both sides of \eqref{Dvar} gives
\begin{align*}\nonumber
&D_{\theta,\xi}\la\mathcal{D}\varphi_t^x(t_i,Y_i^\tau),G_{t_i-r}(\cdot,y)\ra\\
=&\int_{t_i}^{t}\int_{0}^{1}G_{t-\theta_1}(x,z)D_{\theta,\xi}b^{\prime}(\varphi_{\theta_1}^z(t_i,Y_i^\tau))\la\mathcal{D}\varphi_{\theta_1}^z(t_i,Y_i^\tau),G_{t_i-r}(\cdot,y)\ra\ud z\ud \theta_1\\&+\int_{t_i}^{t}\int_{0}^{1}G_{t-\theta_1}(x,z)b^{\prime}(\varphi_{\theta_1}^z(t_i,Y_i^\tau))D_{\theta,\xi}\la\mathcal{D}\varphi_{\theta_1}^z(t_i,Y_i^\tau),G_{t_i-r}(\cdot,y)\ra\ud z\ud \theta_1.
\end{align*}
Then it follows from the definition of $A_i(t,x;0,p,\nu,\tau)$ and H\"older's inequality that
\begin{align*}
A_i(t,x;0,p,\nu,\tau)&\le\int^{t_i}_{t_{i-1}}\int_{0}^{1} \int_0^{t_{i-1}}\int_0^1\int_{t_i}^{t}\int_{0}^{1}G_{t-\theta_1}(x,z)\|D_{\theta,\xi}b^{\prime}(\varphi_{\theta_1}^z(t_i,Y_i^\tau))\|_{p_1}\\&\|\la\mathcal{D}\varphi_{\theta_1}^z(t_i,Y_i^\tau),G_{t_i-r}(\cdot,y)\|_{p_2}|G_{r-\theta}(y,\xi)-G_{t_{i-1}-\theta}(y,\xi)|\ud z\ud \theta_1\ud\xi\ud\theta \ud y\ud r\\&+\int_{t_i}^{t}\int_{0}^{1}G_{t-\theta_1}(x,z)|b|_1A_i(\theta_1,z;0,p,\nu,\tau)\ra\ud z\ud \theta_1
\end{align*}
with $1/p_1+1/p_2=1/p$.
Lemmas  \ref{MD0}, \ref{Ykp11} and \ref{Green}  yield that
\begin{align*}
&A_i(t,x;0,p,\nu,\tau)\\
\le& C\int_{t_i}^{t}\int_{0}^{1}G_{t-\theta_1}(x,z)A_i(\theta_1,z;0,p,\nu,\tau)
\ud z\ud\theta_1+\int_{t_i}^{t}\int_{0}^{1}\int^{t_i}_{t_{i-1}}\int_{0}^{1} \int_0^{t_{i-1}}\int_0^1\\
& G_{t-\theta_1}(x,z)G_{\theta_1-\theta}(z,\xi)G_{\theta_1-r}(z,y)|G_{r-\theta}(y,\xi)-G_{t_{i-1}-\theta}(y,\xi)|\ud\xi\ud\theta \ud y\ud r\ud z\ud \theta_1\\
\le&\int_{t_i}^{t}\int_{0}^{1}G_{t-\theta_1}(x,z)A_i(\theta_1,z;0,p,\nu,\tau)
\ud z\ud\theta_1+\int_{t_i}^{t}\int_{0}^{1}\int^{t_i}_{t_{i-1}} \int_0^{t_{i-1}}\\&\frac{1}{\sqrt{t-\theta_1}}\frac{1}{\sqrt{\theta_1-\theta}}\int_{0}^{1}G_{\theta_1-r}(z,y) \ud y\int_0^1|G_{r-\theta}(y,\xi)-G_{t_{i-1}-\theta}(y,\xi)|\ud\xi\ud\theta\ud r\ud z\ud \theta_1\\
\le&\int_{t_i}^{t}\int_{0}^{1}G_{t-\theta_1}(x,z)A_i(\theta_1,z;0,p,\nu,\tau)
\ud z\ud\theta_1\\&+C\delta^\nu\int^{t_i}_{t_{i-1}} \int_{t_i}^{t}\frac{1}{\sqrt{t-\theta_1}}\frac{1}{\sqrt{\theta_1-t_i}}\ud\theta_1\int_0^{t_{i-1}}(t_{i-1}-\theta)^{-\nu}\ud\theta \ud r
\\
\le&\int_{t_i}^{t}\int_{0}^{1}G_{t-\theta_1}(x,z)A_i(\theta_1,z;0,p,\nu,\tau)
\ud z\ud\theta_1+C\delta^{1+\nu}.
\end{align*}
Taking supremum on both sides of the above inequality over $x\in(0,1),$ we complete the proof of \eqref{KTx} when $k=0$ by applying the Gronwall lemma. Assume that \eqref{KTx} holds up to the index $k-1$. Then the induction argument for general $k$ is similar to the proof of Lemma \ref{Ykp0} and thereby is omitted.

\textsf{Estimate of $\mathcal {J}_3^i$:}
By Lemmas \ref{G11}, \ref{MD0} and Corollaries \ref{ZRY}, \ref{DZ}, 
it holds for some constants $k,\,p$ that
\begin{align}\nonumber
\sum_{i=1}^{N}\left|\mathcal J^i_3\right|\le&\sum_{i=1}^N\int_0^1\int_0^1\int^{t_i}_{t_{i-1}}\int_{0}^{1} \int_0^{t_{i-1}}\int_0^1\|\la\mathcal{D}\varphi_T^x(t_i,Y_i^\tau),G_{t_i-r}(\cdot,y)\ra
b^{\prime\prime}(Z_i^\beta(r,y))D_{\theta,\xi}Z_i^\beta(r,y)\|_{k,p}\\\nonumber
&|G_{r-\theta}(y,\xi)-G_{t_{i-1}-\theta}(y,\xi)||\sigma|\ud\xi\ud\theta \ud y\ud r\ud\beta\ud\tau\\\nonumber
\le&C\sum_{i=1}^N\int^{t_i}_{t_{i-1}}\int_{0}^{1} \int_0^{t_{i-2}}\int_0^1G_{T-r}(x,y)G_{r-\theta}(y,\xi)
|G_{r-\theta}(y,\xi)-G_{t_{i-1}-\theta}(y,\xi)|\ud\xi\ud\theta \ud y\ud r\\\nonumber
+&C\sum_{i=1}^N\int^{t_i}_{t_{i-1}}\int_{0}^{1} \int_{t_{i-2}}^{t_{i-1}}\int_0^1G_{T-r}(x,y)G_{r-\theta}(y,\xi)
|G_{r-\theta}(y,\xi)-G_{t_{i-1}-\theta}(y,\xi)|\ud\xi\ud\theta \ud y\ud r\\\label{J3iN}
=&:\mathcal J_{31}+\mathcal J_{32}.
\end{align}
Then Lemma \ref{Green} with $\nu\in(\frac{1}{2},1)$ leads to
\begin{align*}
\mathcal J_{31}\le&C\sum_{i=1}^N\int^{t_i}_{t_{i-1}}\int_{0}^{1} \int_0^{t_{i-2}}G_{T-r}(x,y)(r-\theta)^{-\frac{1}{2}}
\int_0^1|G_{r-\theta}(y,\xi)-G_{t_{i-1}-\theta}(y,\xi)|\ud\xi\ud\theta \ud y\ud r\\
\le&C\delta ^{\nu}\sum_{i=1}^N\int^{t_i}_{t_{i-1}} \int_0^{t_{i-2}}(r-\theta)^{-\frac{1}{2}}(t_{i-1}-\theta)^{-\nu}\ud\theta \ud r\\
=&2C\delta^{\nu+1}\sum_{i=1}^N \int_0^{t_{i-2}}\frac{(t_{i-1}-\theta)^{-\nu}}{\sqrt{t_i-\theta}+\sqrt{t_{i-1}-\theta}}\ud\theta\\
\le&C\delta^{\nu+1}\sum_{i=1}^N \int_0^{t_{i-2}}(t_{i-1}-\theta)^{-\nu-\frac{1}{2}}\ud\theta\le C\delta^{\frac{1}{2}}.
\end{align*}
Besides, the positivity of $G$ leads to
\begin{align*}
\mathcal J_{32}\le&C\sum_{i=1}^N\int^{t_i}_{t_{i-1}}\int_{0}^{1} \int_{t_{i-2}}^{t_{i-1}}G_{T-r}(x,y)(r-\theta)^{-\frac{1}{2}}
\int_0^1|G_{r-\theta}(y,\xi)-G_{t_{i-1}-\theta}(y,\xi)|\ud\xi\ud\theta \ud y\ud r\\
\le&C\sum_{i=1}^N\int^{t_i}_{t_{i-1}}\int_{0}^{1} \int_{t_{i-2}}^{t_{i-1}}G_{T-r}(x,y)(r-\theta)^{-\frac{1}{2}}
\int_0^1G_{r-\theta}(y,\xi)+G_{t_{i-1}-\theta}(y,\xi)\ud\xi\ud\theta \ud y\ud r\\
\le&C\sum_{i=1}^N\int^{t_i}_{t_{i-1}}\int_{0}^{1} \int_{t_{i-2}}^{t_{i-1}}G_{T-r}(x,y)(r-\theta)^{-\frac{1}{2}}
\ud\theta \ud y\ud r\le C\delta^{\frac{1}{2}}.
\end{align*}

 \subsubsection{Estimate of $\mathcal {I}^4_i$}
 Similarly, we apply Lemmas \ref{G11}, \ref{MD0} and Corollary \ref{ZRY} to obtain that for some positive constants $k,\,p$,
 \begin{align*}
&\left|\E\left[\mathcal I^4_i\right]\right|\\
 \le&C\int_0^1\int_0^1\int^{t_i}_{t_{i-1}}\int_{0}^{1}\int_{t_{i-1}}^r\int_0^1\left\|\la\mathcal{D}\varphi_T^x(t_i,Y_i^\tau),G_{t_i-r}(\cdot,y)\ra b^\prime(Z_i^\beta(r,y))b\left(\varphi_\theta^\xi(t_{i-1},\Phi_{t_{i-1}}(0,u_0)\right)\right\|_{k,p}\\
 &\qquad  G_{r-\theta}(y,\xi)\ud \xi\ud\theta\ud y\ud r\ud\beta\ud\tau\\
 \le&C\int^{t_i}_{t_{i-1}}\int_{0}^{1}\int_{t_{i-1}}^r\int_0^1G_{T-r}(x,y)G_{r-\theta}(y,\xi)\ud \xi\ud\theta\ud y\ud r\le C\delta^2.
 \end{align*}
 \subsubsection{Estimate of $\mathcal {I}^5_i$}
We again apply the Malliavin integration by parts formula to decompose the term $\E[\mathcal {I}^5_i]$ into three parts
 \begin{align*} 
\E\left[\mathcal {I}^5_i\right]
=&\int_0^1\int_0^1\int^{t_i}_{t_{i-1}}\int_{0}^{1} \E\Big[f^\prime(\varphi_T^x(t_i,Y_i^\tau))\la\mathcal{D}\varphi_T^x(t_i,Y_i^\tau),G_{t_i-r}(\cdot,y)\ra b^\prime(Z_i^\beta(r,y))\\
&\int_{t_{i-1}}^r\int_0^1G_{r-\theta}(y,\xi)\sigma W(\ud \theta,\ud\xi)\Big]\ud y\ud r\ud\beta\ud\tau\\
=&\int_0^1\int_0^1\int^{t_i}_{t_{i-1}}\int_{0}^{1} \int_{t_{i-1}}^r\int_0^1\E\Big[f^{\prime\prime}(\varphi_T^x(t_i,Y_i^\tau))D_{\theta,\xi}\varphi_T^x(t_i,Y_i^\tau)\la\mathcal{D}\varphi_T^x(t_i,Y_i^\tau),G_{t_i-r}(\cdot,y)\ra \\
&b^\prime(Z_i^\beta(r,y))\Big]G_{r-\theta}(y,\xi)\ud\xi\ud \theta\ud y\ud r\ud\beta\ud\tau\\
&+\int_0^1\int_0^1\int^{t_i}_{t_{i-1}}\int_{0}^{1} \int_{t_{i-1}}^r\int_0^1\E\left[f^{\prime}(\varphi_T^x(t_i,Y_i^\tau))D_{\theta,\xi}\la\mathcal{D}\varphi_T^x(t_i,Y_i^\tau),G_{t_i-r}(\cdot,y)\ra b^\prime(Z_i^\beta(r,y))\right]\\
&G_{r-\theta}(y,\xi)\ud\xi\ud \theta\ud y\ud r\ud\beta\ud\tau\\
&+\int_0^1\int_0^1\int^{t_i}_{t_{i-1}}\int_{0}^{1} \int_{t_{i-1}}^r\int_0^1\E\left[f^{\prime}(\varphi_T^x(t_i,Y_i^\tau))\la\mathcal{D}\varphi_T^x(t_i,Y_i^\tau),G_{t_i-r}(\cdot,y)\ra D_{\theta,\xi}b^\prime(Z_i^\beta(r,y))\right]\\
&G_{r-\theta}(y,\xi)\ud\xi\ud \theta\ud y\ud r\ud\beta\ud\tau=:\sum_{j=1}^3\mathcal K^i_j.
\end{align*}

\textsf{Estimate of $\mathcal {K}_1^i$:}
By applying Lemma \ref{G11}, Lemma \ref{MD0}, Corollary \ref{ZRY} and the semigroup property of $G$, we obtain that for some positive constants $k,\,p$,
\begin{align*}
\sum_{i=1}^N\left|\mathcal K^i_1\right|
\le&C\sum_{i=1}^N\int_0^1\int_0^1\int^{t_i}_{t_{i-1}}\int_{0}^{1} \int_{t_{i-1}}^r\int_0^1\|D_{\theta,\xi}\varphi_T^x(t_i,Y_i^\tau)\la\mathcal{D}\varphi_T^x(t_i,Y_i^\tau),G_{t_i-r}(\cdot,y)\ra\\
& b^\prime(Z_i^\beta(r,y))\|_{k,p}G_{r-\theta}(y,\xi)\ud\xi\ud \theta\ud y\ud r\ud\beta\ud\tau\\
\le&C\sum_{i=1}^N\int^{t_i}_{t_{i-1}}\int_{0}^{1} \int_{t_{i-1}}^r\int_0^1G_{T-\theta}(x,\xi)G_{T-r}(x,y)
G_{r-\theta}(y,\xi)\ud\xi\ud \theta\ud y\ud r\\
\le&C\delta \sum_{i=1}^N\int^{t_i}_{t_{i-1}}\int_0^1G^2_{T-\theta}(x,\xi)\ud\xi\ud \theta\le C\delta .
\end{align*}

\textsf{Estimate of $\mathcal {K}_2^i$:} 
Similar to the estimation of $\mathcal {J}_2^i$, we denote 
 \begin{align*}
\left|\mathcal K^i_2\right|\le&C
\int_0^1\int^{t_i}_{t_{i-1}}\int_{0}^{1} \int_{t_{i-1}}^r\int_0^1\|D_{\theta,\xi}\la\mathcal{D}\varphi_T^x(t_i,Y_i^\tau),G_{t_i-r}(\cdot,y)\ra \|_{k,p}G_{r-\theta}(y,\xi)\ud\xi\ud \theta\ud y\ud r\ud\tau\\
=&:C\int_0^1B_i(T,x;k,p,\tau)\ud\tau,
\end{align*}
and then  prove that there exists $C=C(T,k,p,\nu)$ such that for any $\tau\in(0,1)$, $x\in(0,1)$ and $t\in(t_i,T]$,
\begin{equation*}\label{KTx1}
B_i(t,x;k,p,\nu,\tau)\le C\delta^2,
\end{equation*}
whose proof is similar to that of \eqref{KTx}. Indeed,
taking the Malliavin derivative $D_{\theta,\xi}$ on both sides of \eqref{Dvar}, 
it follows from 
\begin{align*}
&\int_{t_i}^{t}\int_{0}^{1}\int^{t_i}_{t_{i-1}}\int_{0}^{1} \int_{t_{i-1}}^r\int_0^1G_{t-\theta_1}(x,z)G_{\theta_1-\theta}(z,\xi)G_{\theta_1-r}(z,y)G_{r-\theta}(y,\xi)\ud\xi\ud\theta \ud y\ud r\ud z\ud \theta_1\\
=&\int_{t_i}^{t}\int_{0}^{1}\int^{t_i}_{t_{i-1}} \int_{t_{i-1}}^r\int_0^1G_{t-\theta_1}(x,z)G^2_{\theta_1-\theta}(z,\xi)\ud\xi\ud\theta\ud r\ud z\ud \theta_1\\
\le&C\int_{t_i}^{t}\int^{t_i}_{t_{i-1}}\int_{t_{i-1}}^r\frac{1}{\sqrt{\theta_1-\theta}}\ud\theta\ud r\ud \theta_1\le C\int_{t_i}^{t}\frac{1}{\sqrt{\theta_1-t_i}}\ud\theta_1\int^{t_i}_{t_{i-1}}\int_{t_{i-1}}^r\ud r\ud \theta\le C\delta^2,
\end{align*}
and Lemmas \ref{Ykp0} and \ref{MD0} that
\begin{align*}
B_i(t,x;0,p,\nu,\tau)\le\int_{t_i}^{t}\int_{0}^{1}G_{t-\theta_1}(x,z)B_i(\theta_1,z;0,p,\tau)
\ud z\ud\theta_1+C\delta^2.
\end{align*}

\textsf{Estimate of $\mathcal {K}_3^i$:}
By Corollary \ref{DZ}, for any $t_{i-1}<\theta< r\le t_i$ and $\beta\in(0,1)$,
\begin{align}\label{Dz}
\|D_{\theta,\xi}Z_i^\beta(r,y)\|_{k,p}\le CG_{r-\theta}(y,\xi).
\end{align}
By applying Lemma \ref{G11}, Lemma \ref{MD0}, there exist some positive constants $k,\,p$ such that
\begin{align}\nonumber
\sum_{i=1}^N\left|\mathcal K^i_3\right|\le&C\sum_{i=1}^N
\int_0^1\int_0^1\int^{t_i}_{t_{i-1}}\int_{0}^{1} \int_{t_{i-1}}^r\int_0^1\|\la\mathcal{D}\varphi_T^x(t_i,Y_i^\tau),G_{t_i-r}(\cdot,y)\ra b^{\prime\prime}(Z_i^\beta(r,y))\\\nonumber
&\qquad\qquad D_{\theta,\xi}b^\prime(Z_i^\beta(r,y))\|_{k,p}
G_{r-\theta}(y,\xi)\ud\xi\ud \theta\ud y\ud r\ud\beta\ud\tau\\\label{K3iN}
\le&C\sum_{i=1}^N\int^{t_i}_{t_{i-1}}\int_{0}^{1} \int_{t_{i-1}}^r\int_0^1G_{T-r}(x,y)G^2_{r-\theta}(y,\xi)\ud\xi\ud \theta\ud y\ud r\le C\delta^{\frac{1}{2}}.
\end{align}
Gathering all above estimates, we complete the proof.
\qed

\begin{rems}\label{rem1}
(1) If $b(u)=b_1u+c$ is an affine function, then $b^\prime(Z_i^\beta(r,y))\equiv0$ and thereby 
$\mathcal J^i_3=\mathcal K^i_3=0,\,i=1,\cdots,N.$ In this case, by gathering the estimates on $\{I_i^j\}_{i=1,\cdots,N,\,j=1,\cdots,5}$ in the proof of Theorem \ref{err}, we have,  instead of \eqref{order}, that
 \begin{equation*}
\left|\E[f(u^\delta(T,x))]- \E\left[f(u(T,x))\right]\right|\le C(T,b,\sigma,\|u_0\|_E,\nu)\delta^\nu,
 \end{equation*}
  for every $\nu\in(\frac{1}{2},1)$.
  
(2) With the same idea of Taylor expansion and error decomposition technique as  the proof of the above theorem, we may have the following result on weak convergence as well:

 Let $f:\R\rightarrow\R$ be smooth with bounded derivatives and $0<\delta \le 1$.
Assume that $b\in\mathcal C_\mathbf b^2$. Then there exists some positive constant $C=C(T,b,\sigma,\|u_0\|_E,f)$ such that 
 \begin{equation}\label{rl}
\left|\E[f(u^\delta(T,x))]- \E\left[f(u(T,x))\right]\right|\le C\delta^{\frac{1}{2}},\forall\,x\in(0,1).
 \end{equation}
 
Note that we don't need Lemma \ref{G11} since the generic constant $C$ may depend on $f$ here. As a result, the requirements of the perturbation parameter $\delta$ and the regularity of $b$ are not as strict as those of Theorem \ref{err}.
\end{rems}

\subsection{Analysis with small drift}
 It is obvious that if $b=0$,  the solution $u^\delta(T,x)$ of Eq. \eqref{SAEE} is exactly the exact solution $u(T,x)$ of Eq. \eqref{SHE}. In this part,
we consider the weak convergence with small drift $b$,  that is $b(u)=\varepsilon \widetilde b(u)$ for small $0<\varepsilon<1$ and $\widetilde b\in \mathcal C_\mathbf{b}^3$ is not affine. In this case, we observe that $\mathcal J^i_3$ and $\mathcal K^i_3$ are  bounded by $C(T,\sigma,\widetilde b) \varepsilon\delta^{\frac{1}{2}}$.
By borrowing the notation from the proof of Theorem \ref{err},  for any $\nu\in(\frac{1}{2},1)$, there exists  $C=C(\nu,T,\sigma,\widetilde b,\|u_0\|_E,f)$ such that
\begin{equation}\label{L3}
\left|\sum_{j=1}^5\sum_{i=1}^N\E\left[\mathcal I_i^j\right]-\sum_{i=1}^N(\mathcal J^i_3+\mathcal K^i_3)\right|\le C\varepsilon\delta ^{\nu},
\end{equation}
and thereby for $0<\delta \le1$, 
\begin{align}\label{small d1}
\left|\E[f(u^\delta(T,x))]- \E\left[f(u(T,x))\right]\right|\le C\varepsilon\delta ^{\nu}+|\sum_{i=1}^N\mathcal J^i_3|+|\sum_{i=1}^N\mathcal K^i_3|
\le C\varepsilon\delta^{\frac{1}{2}}.
 \end{align} 
The main result of this part is the following proposition.
\begin{pro}
Let $b(u)=\epsilon \widetilde b(u)$ for small $0<\epsilon<1$ and $\widetilde b\in \mathcal C_\mathbf{b}^3$, $f:\R\rightarrow\R$ be smooth with bounded derivatives and $0<\delta \le 1$. Then for any $\nu\in(\frac{3}{4},1)$,
there exists some positive constant $C=C(T,\widetilde b,\sigma,\|u_0\|_E,f,\nu)$ such that 
 \begin{equation}\label{small d2}
\left|\E[f(u^\delta(T,x))]- \E\left[f(u(T,x))\right]\right|\le C\varepsilon\delta^{\nu-\frac{1}{4}}+C\varepsilon^2\delta^{\frac{1}{2}},\forall\,x\in(0,1).
 \end{equation}
\end{pro}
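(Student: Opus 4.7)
The plan is to start from the error decomposition used in the proof of Theorem~\ref{err}, specialised to $b=\varepsilon\widetilde b$. Each remainder $R_j^i$ in \eqref{weak error} inherits at least one explicit factor of $\varepsilon$, and the estimate \eqref{L3} already controls every contribution in $\sum_{j=1}^5\sum_{i=1}^N\E[\mathcal I^j_i]$ apart from $\sum_{i=1}^N(\mathcal J^i_3+\mathcal K^i_3)$ by $C\varepsilon\delta^\nu\le C\varepsilon\delta^{\nu-1/4}$ (since $\delta\le 1$). Hence the task reduces to proving
\begin{equation*}
\Bigl|\sum_{i=1}^N(\mathcal J^i_3+\mathcal K^i_3)\Bigr|\le C\varepsilon\delta^{\nu-\frac14}+C\varepsilon^2\delta^{\frac12}.
\end{equation*}

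My approach is a further Taylor expansion, this time in the small drift parameter $\varepsilon$. Both $\mathcal J^i_3$ and $\mathcal K^i_3$ already contain one explicit $\varepsilon$ through $b^{\prime\prime}(Z_i^\beta(r,y))=\varepsilon\widetilde b^{\prime\prime}(Z_i^\beta(r,y))$. All other random factors inside these terms, namely the Gateaux derivative $\la\mathcal D\varphi_T^x(t_i,Y_i^\tau),G_{t_i-r}(\cdot,y)\ra$ from \eqref{Dvar}, the Malliavin derivative $D_{\theta,\xi}Z_i^\beta(r,y)$, and the random variables $Z_i^\beta(r,y)$ and $\varphi_T^x(t_i,Y_i^\tau)$ themselves, satisfy fixed-point equations whose nonlinear part carries an additional factor of $\varepsilon$. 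One Picard iteration therefore yields expansions of the form
\begin{equation*}
\la\mathcal D\varphi_T^x(t_i,Y_i^\tau),G_{t_i-r}(\cdot,y)\ra=G_{T-r}(x,y)+\varepsilon\mathscr E_1,\qquad D_{\theta,\xi}Z_i^\beta(r,y)=\beta\sigma G_{r-\theta}(y,\xi)+\varepsilon\mathscr E_2,
\end{equation*}
with $\mathscr E_1,\mathscr E_2$ enjoying exactly the same Green-function bounds as the full quantities, by Lemmas \ref{MD0}, \ref{Ykp11} and Corollary \ref{DZ}. Substituting into $\mathcal J^i_3+\mathcal K^i_3$ and collecting powers of $\varepsilon$ splits each term into a \emph{leading} piece of order $\varepsilon$ (in which every Gateaux/Malliavin derivative has been replaced by its deterministic Green-function leading order, and the sole source of randomness is essentially $\widetilde b^{\prime\prime}(Z_i^\beta(r,y))$ together with $f^\prime(\varphi_T^x(t_i,Y_i^\tau))$), together with a \emph{remainder} of genuine order $\varepsilon^2$.

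The order-$\varepsilon^2$ remainder is bounded by $C\varepsilon^2\delta^{1/2}$ by repeating verbatim the estimates that produced $\sum_i|\mathcal J^i_3|+\sum_i|\mathcal K^i_3|\le C\delta^{1/2}$ in the proof of Theorem \ref{err}. The main obstacle is the leading order-$\varepsilon$ piece, whose naive bound is still only $\delta^{1/2}$; to upgrade it to $\delta^{\nu-1/4}$ I plan to decompose $\widetilde b^{\prime\prime}(Z_i^\beta(r,y))=\widetilde b^{\prime\prime}(Z_i^\beta(t_{i-1},y))+[\widetilde b^{\prime\prime}(Z_i^\beta(r,y))-\widetilde b^{\prime\prime}(Z_i^\beta(t_{i-1},y))]$. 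The increment part, whose $L^p$-norm is $O(\delta^{1/4})$ by the temporal $1/4$-Hölder regularity of the stochastic heat equation, contributes at most $C\varepsilon\delta^{3/4}\le C\varepsilon\delta^{\nu-1/4}$ after integration against the $G_{r-\theta}^2$ kernel and summation over $i$. For the $\mathscr F_{t_{i-1}}$-measurable anchor part, I would apply Lemma \ref{Green} to rewrite one factor of $G_{r-\theta}(y,\xi)$ as $G_{t_i-\theta}(y,\xi)+[G_{r-\theta}-G_{t_i-\theta}](y,\xi)$ and exploit the temporal Hölder gain of size $\delta^\nu$ in the difference, together with one further Malliavin integration by parts that closes cleanly thanks to the $\mathscr F_{t_{i-1}}$-measurability. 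The condition $\nu>3/4$ in the statement is exactly what is needed for this trade-off to beat the singular factor $(r-\theta)^{-1/2}$ produced by $\int_0^1 G_{r-\theta}^2(y,\xi)\,d\xi$.
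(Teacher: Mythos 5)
Your overall architecture — isolate $\sum_i(\mathcal J^i_3+\mathcal K^i_3)$ via \eqref{L3}, split off the Hölder increment of $b^{\prime\prime}$ to collect $C\varepsilon\delta^{\nu-1/4}$, and try to beat $\delta^{1/2}$ on the anchored part — mirrors the shape of the paper's proof (which splits $\mathcal J^i_3+\mathcal K^i_3=\mathcal L^1_i+\mathcal L^2_i$ by anchoring $b^{\prime\prime}$ at $\Phi_{t_{i-1}}^y(0,u_0)$ and bounds $\mathcal L^2$ exactly as you bound your ``increment part''). But the mechanism you propose for the anchor is wrong, and the Taylor expansion in $\varepsilon$ is an unnecessary detour: the paper simply keeps $b=\varepsilon\widetilde b$ and reads off the $\varepsilon$-powers from $|b|_j=\varepsilon|\widetilde b|_j$.

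The genuine gap is in your treatment of the anchored leading piece. The paper's decisive step is that, after anchoring $b^{\prime\prime}$, one may pull the $\int_0^1 d\beta$ inside and compute $\int_0^1 I_i(\beta,r,y)\,d\beta$ \emph{in closed form} using the spectral representation of $G$. The pieces coming from $\mathcal J^i_3$ (with $\theta<t_{i-1}$, kernel $G_{r-\theta}-G_{t_{i-1}-\theta}$) and from $\mathcal K^i_3$ (with $\theta\in(t_{i-1},r)$, kernel $G_{r-\theta}$) cancel so that the deterministic leading term of $\int_0^1 I_i\,d\beta$ is $\frac{\sigma^2}{2}\int_{t_{i-1}}^r G_{2s}(y,y)\,ds$, with the integral running from $t_{i-1}$ to $r$ rather than from $0$ to $r-t_{i-1}$. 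This shift of the singularity away from $s=0$ is what upgrades the sum over $i$ from $O(\delta^{1/2})$ to $O(\delta)$; the remaining random part $A_i^1+A_i^2$ carries an extra $|b|_1=\varepsilon|\widetilde b|_1$, producing the $\varepsilon^2\delta^{1/2}$ term. Your proposal of replacing one $G_{r-\theta}$ by $G_{t_i-\theta}$ and ``exploiting $\delta^\nu$ in the difference'' does not produce this cancellation and in fact makes the $\theta$-integral \emph{more} singular: $\int_0^1 G_{r-\theta}|G_{r-\theta}-G_{t_i-\theta}|\,d\xi\lesssim(r-\theta)^{-1/2-\nu}\delta^\nu$, which is not integrable near $\theta=r$ for $\nu\ge1/2$, while $\int_0^1 G_{r-\theta}G_{t_i-\theta}\,d\xi\sim(t_i-\theta)^{-1/2}$ still yields $O(\delta^{1/2})$ after summing over $i$. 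The ``one further Malliavin integration by parts that closes cleanly thanks to the $\mathscr F_{t_{i-1}}$-measurability'' is not justified: $\widetilde b^{\prime\prime}(\Phi_{t_{i-1}}^y)$ being $\mathscr F_{t_{i-1}}$-measurable does not make the expectation vanish, since $f^\prime(\varphi_T^x(t_i,Y_i^\tau))$ and $D_{\theta,\xi}\varphi_T^x(t_i,Y_i^\tau)$ depend on the noise up to time $T$, and any further IBP just reproduces terms of the same $\delta^{1/2}$ size. Finally, a secondary slip: your leading-order expansion $D_{\theta,\xi}Z_i^\beta(r,y)=\beta\sigma G_{r-\theta}(y,\xi)+\varepsilon\mathscr E_2$ is correct only for $\theta\in(t_{i-1},r)$; for $\theta<t_{i-1}$ the leading order is $\bigl(\beta G_{r-\theta}+(1-\beta)G_{t_{i-1}-\theta}\bigr)\sigma$, and this $(1-\beta)G_{t_{i-1}-\theta}$ term is exactly what the $\int_0^1 d\beta$ average needs in order to realize the $\mathcal J^i_3$--$\mathcal K^i_3$ cancellation. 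In short, you need the explicit $\beta$-average computation of $I_i$; without it the anchored part cannot be brought below $\varepsilon\delta^{1/2}$.
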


\begin{proof}
For the sake of simplicity, denote
\begin{align*}
I_i(\beta,r,y):=&\int_0^{t_{i-1}}\int_0^1D_{\theta,\xi}Z_i^\beta(r,y)\{G_{r-\theta}(y,\xi)-G_{t_{i-1}-\theta}(y,\xi)\}\sigma\ud\xi\ud\theta\\
&+\int_{t_{i-1}}^r\int_0^1D_{\theta,\xi}Z_i^\beta(r,y)G_{r-\theta}(y,\xi)\sigma\ud\xi\ud \theta.
\end{align*}
We recall that $\mathcal{J}_3^i+\mathcal{K}_3^i$ is equal to
\begin{align*}
\int_0^1\int_0^1\int^{t_i}_{t_{i-1}}\int_{0}^{1}\E\Big[f^\prime(\varphi_T^x(t_i,Y_i^\tau))\la\mathcal{D}\varphi_T^x(t_i,Y_i^\tau),G_{t_i-r}(\cdot,y)\ra
b^{\prime\prime}(Z_i^\beta(r,y))I_i(\beta,r,y)\Big]\ud y\ud r\ud\beta\ud\tau
\end{align*}
and rewrite $\mathcal{J}_3^i+\mathcal{K}_3^i=\mathcal L^1_i+\mathcal L^2_i$ with
\begin{align*}
\mathcal L^1_i:=&\int_0^1\int^{t_i}_{t_{i-1}}\int_{0}^{1}\E\Big[f^\prime(\varphi_T^x(t_i,Y_i^\tau))\la\mathcal{D}\varphi_T^x(t_i,Y_i^\tau),G_{t_i-r}(\cdot,y)\ra
b^{\prime\prime}(\Phi_{t_{i-1}}^y(0,u_0))\\
&\qquad\qquad\qquad\qquad\int_0^1I_i(\beta,r,y)\ud\beta\Big]\ud y\ud r\ud\tau,\\
\mathcal L^2_i:=&\int_0^1\int_0^1\int^{t_i}_{t_{i-1}}\int_{0}^{1}\E\Big[f^\prime(\varphi_T^x(t_i,Y_i^\tau))\la\mathcal{D}\varphi_T^x(t_i,Y_i^\tau),G_{t_i-r}(\cdot,y)\ra
\Delta_i(\beta,r,y)\\
&\qquad\qquad\qquad\qquad I_i(\beta,r,y)\Big]\ud y\ud r\ud\beta\ud\tau,
\end{align*}
where 
\begin{align*}
\Delta_i(\beta,r,y):=&b^{\prime\prime}(Z_i^\beta(r,y))-b^{\prime\prime}(\Phi_{t_{i-1}}^y(0,u_0))\\=&\int_0^1b^{\prime\prime\prime}\left((1-\zeta)\Phi_{t_{i-1}}^y(0,u_0)+\zeta Z_i^\beta(r,y)\right)\ud\zeta \left(Z_i^\beta(r,y)-\Phi_{t_{i-1}}^y(0,u_0)\right).
\end{align*}
First, we  proceed to estimate $I_i(\beta,r,y)$ as follows. Notice that
\begin{align}\nonumber\label{lowN}
&\int_{0}^{1} \int_0^{t_{i-1}}D_{\theta,\xi}\Phi^y_{t_{i-1}}(0,u_0)\{G_{r-\theta}(y,\xi)-G_{t_{i-1}-\theta}(y,\xi)\}\sigma\ud\theta\ud\xi\\\nonumber
=&\sum_{j=1}^{i-2}\int_{0}^{1} \int_0^{t_{i-1}}\int_{t_j}^{t_{j+1}}\int_{0}^{1}G_{t_{i-1}-s}(y,z)
b^\prime(\Phi^z_{t_j}(0,u_0))D_{\theta,\xi}\Phi^z_{t_j}(0,u_0)\{G_{r-\theta}(y,\xi)-G_{t_{i-1}-\theta}(y,\xi)\}\\\nonumber
&\sigma\ud z\ud s\ud\theta\ud\xi+\int_{0}^{1} \int_0^{t_{i-1}}G_{t_{i-1}-\theta}(y,\xi)\sigma^2\{G_{r-\theta}(y,\xi)-G_{t_{i-1}-\theta}(y,\xi)\}\ud\theta\ud\xi\\
=&:\sum_{j=1}^{i-2}B^j_i+\int_{0}^{1} \int_0^{t_{i-1}}G_{t_{i-1}-\theta}(y,\xi)\sigma^2\{G_{r-\theta}(y,\xi)-G_{t_{i-1}-\theta}(y,\xi)\}\ud\theta\ud\xi.
\end{align} 
For any $j$ with $1\le j\le i-2$, we denote $A_i^1:=\sum_{j=1}^{i-2}B^j_i$ and decompose $B^j_i=B_{i,1}^j+B_{i,2}^j$ with
\begin{align*}
B_{i,1}^j:=&\int_{0}^{1} \int_0^{t_{j-1}}\int_{t_j}^{t_{j+1}}\int_{0}^{1}G_{t_{i-1}-s}(y,z)
b^\prime(\Phi^z_{t_j}(0,u_0))D_{\theta,\xi}\Phi^z_{t_j}(0,u_0)\\
&\qquad\qquad\qquad\qquad\{G_{r-\theta}(y,\xi)-G_{t_{i-1}-\theta}(y,\xi)\}\ud z\ud s\ud\theta\ud\xi,\\
B_{i,2}^j:=&\int_{0}^{1} \int_{t_{j-1}}^{t_j}\int_{t_j}^{t_{j+1}}\int_{0}^{1}G_{t_{i-1}-s}(y,z)
b^\prime(\Phi^z_{t_j}(0,u_0))D_{\theta,\xi}\Phi^z_{t_j}(0,u_0)\\
&\qquad\qquad\qquad\qquad\{G_{r-\theta}(y,\xi)-G_{t_{i-1}-\theta}(y,\xi)\}\ud z\ud s\ud\theta\ud\xi,
\end{align*}
because of $D_{\theta,\xi}\Phi^z_{t_j}(0,u_0)=0$, if $\theta>t_j$.
By choosing $\nu\in(\frac{1}{2},1)$, it leads to 
\begin{align*}
\|B_{i,1}^j\|_{2}
\le&C|b|_1\int_{0}^{1} \int_0^{t_{j-1}}\int_{t_j}^{t_{j+1}}\int_{0}^{1}
G_{t_{i-1}-s}(y,z)(t_j-\theta)^{-\frac{1}{2}}|G_{r-\theta}(y,\xi)-G_{t_{i-1}-\theta}(y,\xi)|\ud z\ud s\ud\theta\ud\xi\\
\le&C\delta ^{\nu}|b|_1 \int_0^{t_{j-1}}\int_{t_j}^{t_{j+1}}\int_{0}^{1}
G_{t_{i-1}-s}(y,z)(t_j-\theta)^{-\frac{1}{2}}(t_{i-1}-\theta)^{-\nu}\ud z\ud s\ud\theta\\
\le&C|b|_1\delta^{1+\nu} \int_0^{t_{j-1}}
(t_j-\theta)^{-\frac{1}{2}}(t_{i-1}-\theta)^{-\nu}\ud\theta
\le C|b|_1\delta^{1+\nu} \int_0^{t_{j-1}}
(t_j-\theta)^{-\frac{1}{2}-\nu}\ud\theta\le C|b|_1\delta^{\frac{3}{2}}
\end{align*}
and
\begin{align*}
\|B_{i,2}^j\|_2
\le&C\int_{t_{j-1}}^{t_{j}}\int_{t_j}^{t_{j+1}}\int_{0}^{1}
G_{t_{i-1}-s}(y,z)|b|_1(t_j-\theta)^{-\frac{1}{2}}\int_{0}^{1} |G_{r-\theta}(y,\xi)-G_{t_{i-1}-\theta}(y,\xi)|\ud\xi\ud z\ud s\ud\theta\\
\le&C\int_{t_{j-1}}^{t_{j}}\int_{t_j}^{t_{j+1}}\int_{0}^{1}
G_{t_{i-1}-s}(y,z)|b|_1(t_j-\theta)^{-\frac{1}{2}}\ud z\ud s\ud\theta\\
\le&C|b|_1\delta  \int_{t_{j-1}}^{t_{j}}
(t_j-\theta)^{-\frac{1}{2}}\ud\theta\le C |b|_1\delta^{\frac{3}{2}}
\end{align*}
for some $C=C(T,\sigma,\widetilde b,\nu)$.
 Hence,  it follows that   
\begin{align}\label{Ai1}
\|\sum_{j=1}^{i-2}B^j_i\|_2=\|\sum_{j=1}^{i-2}(B_{i,1}^j+B_{i,2}^j)\|_2\le C(T,\sigma,\widetilde b)|b|_1\delta^{\frac{1}{2}}.
\end{align}
In the same way, by noticing that for $0<\theta<t_{i-1}<r<t_i$,
 \begin{align*}
&D_{\theta,\xi}\varphi_{r}^y(t_{i-1},\Phi_{t_{i-1}}(0,u_0))\\
=&G_{r-\theta}(y,z)\sigma+\sum_{j=1}^{i-2}\int_{t_j}^{t_{j+1}}\int_0^1G_{r-s}(y,z)b^\prime(\Phi_{t_j}^z(0,u_0))D_{\theta,\xi}\Phi_{t_j}^z(0,u_0)\ud z\ud s\\
&+\int_{t_{i-1}}^r\int_0^1G_{r-s}(y,z)b^\prime(\varphi_{s}^z(t_{i-1},\Phi_{t_{i-1}}(0,u_0)))D_{\theta,\xi}\varphi_{s}^z(t_{i-1},\Phi_{t_{i-1}}(0,u_0))\ud z\ud s
\end{align*}
and the estimate
 \begin{align*}
&\Big|\int_{0}^{1} \int_0^{t_{i-1}}\int_{t_{i-1}}^r\int_0^1G_{r-s}(y,z)b^\prime(\varphi_{s}^z(t_{i-1},\Phi_{t_{i-1}}(0,u_0)))D_{\theta,\xi}\varphi_{s}^z(t_{i-1},\Phi_{t_{i-1}}(0,u_0))\\
&\{G_{r-\theta}(y,\xi)-G_{t_{i-1}-\theta}(y,\xi)\}\sigma\ud z\ud s\ud\theta\ud\xi\Big|\\
\le&C(T,\sigma,\widetilde b)|b|_1\int_{0}^{1} \int_0^{t_{i-1}}\int_{t_{i-1}}^r\int_0^1G_{r-s}(y,z)G_{s-\theta}(z,\xi)
|G_{r-\theta}(y,\xi)-G_{t_{i-1}-\theta}(y,\xi)|\ud z\ud s\ud\theta\ud\xi\\
\le&C(T,\sigma,\widetilde b)|b|_1 \int_0^{t_{i-1}}\int_{t_{i-1}}^r(r-\theta)^{-\frac{1}{2}}
\int_{0}^{1}G_{r-\theta}(y,\xi)+G_{t_{i-1}-\theta}(y,\xi)\ud\xi\ud s\ud\theta\\
\le& C(T,\sigma,\widetilde b)|b|_1\delta^{\frac{1}{2}},
\end{align*}
we can derive that for some $A_i^2$,
\begin{align}\nonumber
&\int_{0}^{1} \int_0^{t_{i-1}}D_{\theta,\xi}\varphi_{r}^y(t_{i-1},\Phi_{t_{i-1}}(0,u_0))\{G_{r-\theta}(y,\xi)-G_{t_{i-1}-\theta}(y,\xi)\}\sigma\ud\theta\ud\xi\\\label{lowE}
=&\int_{0}^{1} \int_0^{t_{i-1}}G_{r-\theta}(y,\xi)\sigma^2\{G_{r-\theta}(y,\xi)-G_{t_{i-1}-\theta}(y,\xi)\}\ud\theta\ud\xi+A_i^2
\end{align} 
with 
\begin{align}\label{Ai2}
\|A_i^2\|_2\le C(\widetilde b,T,\sigma)|b|_1\delta^{\frac{1}{2}}.
\end{align}
By the semigroup property of $G$ and \eqref{GD}, for any $r_1\in(t_{i-1},t_i]$,
\begin{align*}
&\int_{0}^{1} \int_0^{t_{i-1}}G_{r_1-\theta}(y,\xi)\sigma^2\{G_{r-\theta}(y,\xi)-G_{t_{i-1}-\theta}(y,\xi)\}\ud\theta\ud\xi\\=&\int_0^{t_{i-1}}\sigma^2\{G_{r_1-\theta+r-\theta}(y,y)-G_{r_1-\theta+t_{i-1}-\theta}(y,y)\}\ud\theta\\
=&2\sigma^2 \sum_{k=0}^{\infty}\int_0^{t_{i-1}}e^{-k^{2} \pi^{2} (r_1-\theta+r-\theta)}-e^{-k^{2} \pi^{2} (r_1-\theta+t_{i-1}-\theta)}\ud\theta\cos^2 (k \pi y).
 \end{align*}
Then \eqref{lowN} and \eqref{lowE}, and the definition of $Z_i^\beta(r,y)$ give that
\begin{align*}
&\int_{0}^{1} \int_0^{t_{i-1}}D_{\theta,\xi}Z_i^\beta(r,y)\{G_{r-\theta}(y,\xi)-G_{t_{i-1}-\theta}(y,\xi)\}\sigma\ud\theta\ud\xi\\
= &2\beta\sigma^2 \sum_{k=1}^{\infty}\int_0^{t_{i-1}}e^{-k^{2} \pi^{2} (r-\theta+r-\theta)}-e^{-k^{2} \pi^{2} (r-\theta+t_{i-1}-\theta)}\ud\theta\cos^2 (k \pi y)+\beta A_i^2\\
&+2(1-\beta)\sigma^2 \sum_{k=1}^{\infty}\int_0^{t_{i-1}}e^{-k^{2} \pi^{2} (t_{i-1}-\theta+r-\theta)}-e^{-k^{2} \pi^{2} (t_{i-1}-\theta+t_{i-1}-\theta)}\ud\theta\cos^2 (k \pi y)+(1-\beta) A_i^1.
\end{align*}
Since for any $\theta\in(t_{i-1},r),\,D_{\theta,\xi}Z_i^\beta(r,y)=\beta\sigma G_{r-\theta}(y,\xi)$, we have that for $\beta\in(0,1)$, 
  \begin{align*}
 & \int_{t_{i-1}}^r\int_0^1D_{\theta,\xi}Z_i^\beta(r,y)G_{r-\theta}(y,\xi)\sigma\ud\xi\ud \theta=
\beta\int_{0}^{1} \int_{t_{i-1}}^r\sigma^2G^2_{r-\theta}(y,\xi)\ud\theta\ud\xi\\
=&\beta\int_{t_{i-1}}^r\sigma^2G_{2(r-\theta)}(y,y)\ud\theta=2\beta\sigma^2 \sum_{k=0}^{\infty}\int_{t_{i-1}}^re^{-k^{2} \pi^{2} (r-\theta+r-\theta)}\ud\theta\cos^2 (k \pi y)\\=&\beta\sigma^2 \sum_{k=1}^{\infty}\frac{1}{k^2\pi^2}\left(1-e^{-2k^2\pi^2(r-t_{i-1})}\right)
\cos^2 (k \pi y)+2\beta\sigma^2(r-t_{i-1}).
 \end{align*}
Summarizing the above calculations, we obtain 
 \begin{align}\nonumber\label{Iib}
\int_0^1I_i(\beta,r,y)\ud\beta
= &\sigma^2 \sum_{k=1}^{\infty}\int_0^{t_{i-1}}e^{-k^{2} \pi^{2} (r-\theta+r-\theta)}-e^{-k^{2} \pi^{2} (t_{i-1}-\theta+t_{i-1}-\theta)}\ud\theta\cos^2 (k \pi y)\\\nonumber
&+\frac{\sigma^2}{2} \sum_{k=1}^{\infty}\frac{1}{k^2\pi^2}\left(1-e^{-2k^2\pi^2(r-t_{i-1})}\right)
\cos^2 (k \pi y)+\sigma^2(r-t_{i-1})+\frac{1}{2}(A_i^2+ A_i^1)\\\nonumber
=&\frac{\sigma^2}{2} \sum_{k=1}^{\infty}\frac{1}{k^2\pi^2}\left(e^{-2k^2\pi^2t_{i-1}}-e^{-2k^2\pi^2r}\right)
\cos^2 (k \pi y)+\sigma^2(r-t_{i-1})+\frac{1}{2}(A_i^2+ A_i^1)\\
=&\sigma^2 \sum_{k=1}^{\infty}\int_{t_{i-1}}^re^{-2k^2\pi^2s}\ud s
\cos^2 (k \pi y)+\sigma^2(r-t_{i-1})+\frac{1}{2}(A_i^2+ A_i^1).
\end{align}
From \eqref{Ai1}, \eqref{Ai2} and the inequality $\sum_{k=1}^{\infty}e^{-2k^2\pi^2s}\le (8\pi s)^{-\frac{1}{2}}$, we have that for some $C=C(|f|_1,T,\sigma,\widetilde b)$,
\begin{align}\nonumber
&\left|\sum_{i=1}^N\mathcal L^1_i\right|\le C|b|_2\sum_{i=1}^N\int^{t_i}_{t_{i-1}}\int_{0}^{1}G_{T-r}(x,y)
\left\|\int_0^1I_i(\beta,r,y)\ud\beta\right\|_2\ud y\ud r\\\nonumber
\le& C|b|_2\sum_{i=1}^N\int^{t_i}_{t_{i-1}}
\int_{t_{i-1}}^rs^{-\frac{1}{2}}\ud s\ud r+C|b|_2\left(|b|_1\delta^{\frac{1}{2}}+\delta \right)\\
\label{L1}
\le&C|b|_2\left(|b|_1\delta^{\frac{1}{2}}+\delta \right).
\end{align}
Now we estimate $\mathcal L^2_i$. By \eqref{Zry} and the fact that the exact flow $\varphi$ associated to Eq. \eqref{SHE} is almost $1/4$-H\"older continuous in time (see e.g. \cite[Proposition 2.4.3]{DN06}),  for $r\in(t_{i-1},t_i]$ and $\nu\in(\frac{3}{4},1)$, we have
\begin{align*}
\|\Delta_i(\beta,r,y)\|_2\le|b|_3\left\|\varphi_r^y(t_{i-1},\Phi_{t_{i-1}}(0,u_0))-\Phi_{t_{i-1}}^y(0,u_0)\right\|_2\le C(\nu,T,\widetilde b,\sigma,\|u_0\|_E)|b|_3\delta^{\nu-\frac{3}{4}}.
\end{align*}
By replacing $b^{\prime\prime}(Z_i^\beta(r,y))$ by $\Delta_i(\beta,r,y)$ in \eqref{J3iN} and \eqref{K3iN}, it follows that for any $\nu\in(\frac{3}{4},1)$,
\begin{align}\label{L2}
&\left|\sum_{i=1}^N\mathcal L^2_i\right|
\le C(|f|_1,T,\sigma,\nu,\widetilde b)|b|_3\delta^{\nu-\frac{1}{4}}.
\end{align}
Finally, by noticing that $|b|_i=\epsilon|\widetilde b|_i,\,i=1,2,3$, and combining \eqref{L3}, \eqref{L1} and \eqref{L2},  we complete the proof.
\end{proof}

\section{Convergence of Density Approximations}\label{S5}
In this section, we focus on the convergence of density approximations and  the logarithmic  asymptotic behavior of the densities. 


\subsection{Convergence of Densities}
This part investigates the convergence of density approximations for Eq. \eqref{SHE} in both uniform convergence  topology and total variation distance.  We would like to mention that there already exist some convergence results of density approximations for stochastic ordinary differential equations (see e.g. \cite{BT96,CHS19} and references therein), but few results on stochastic partial differential equations.

From Theorem \ref{smooth} and \cite[Lemma 2.1.7]{DN06}, it follows that
$$\E[\bm{\delta}_{\z}(u^\delta(T,x))]=q_{T,x}^\delta(\z)~ \big(\text{resp.} ~\E\left[\bm{\delta}_{\z}\left(u(T,x)\right)\right]=q_{T,x}(\z)\big)$$ is the density of $u^\delta(T,x)$ (resp. $u(T,x)$) at $\z\in\R$. On the basis of Theorem \ref{err},
we show that the density $q_{T,x}^\delta$ converges to the density $q_{T,x}$ in the uniformly convergence  topology, and  the convergence order coincides with the weak convergence order. For this purpose, we begin with recalling the fact: If a random variable $\mathbf F$ has a smooth density $q$, then
\begin{equation}\label{gnzF}
q(\z)=\lim_{n\rightarrow \infty}\int_{\R}g_{n^{-1}}(\z-\xi)q(\xi)\ud \xi=\lim_{n\rightarrow \infty}\mathbb E[g_{n^{-1}}(\z-\mathbf F)],
\end{equation}
where $g_{n^{-1}}$ is defined by \eqref{Gaussian}. 
%
Now we prove Theorem \ref{dstyt}. 

\textit{Proof of Theorem \ref{dstyt}}:
Let $\z\in\R$ and integer $n\ge1$ be arbitrarily fixed. We
take $f(\y)=g_{n^{-1}}(\y-\z)$ in Theorem \ref{err}. Then $F(\y)=\int_{-\infty}^\y g_{n^{-1}}(\y_1-\z)\ud \y_1$ satisfies $0\le F\le 1$, hence there exists $C=C(T,b,\sigma,\|u_0\|_E,\nu)$ independent of $\z$, $n$ and $x$ such that
\begin{equation*}
\left|\E[g_{n^{-1}}(u^\delta(T,x)-\z)]- \E\left[g_{n^{-1}}(u(T,x)-\z)\right]\right|\le C\delta^{\frac{1}{2}}.
 \end{equation*}
Putting $n\rightarrow\infty$ in the above inequality, then the desired result \eqref{error1} follows from  Theorem \ref{smooth}, the non-degeneracy of $u(T,x)$ (see e.g. \cite[Section 4]{MD08}) and \eqref{gnzF}.
\qed

When $b$ is affine, it can be seen from Remarks \ref{rem1} that the convergence order of density approximations  can be improved to be nearly $1$, which can also be proved based on the strong convergence order; see the following example.
\begin{exam}
We discuss the affine case: $b(u)=b_1u+c$. 
 On the one hand, 
\begin{align*}
u(T,x)=&\int_0^1e^{b_1T}G_{T}\left(x,y\right)u_0(y)\ud y+\int_{0}^{T}\int_{0}^{1}G_{T-s}(x,y)e^{b_1(T-s)}c\ud y\ud s\\
 &+\int_{0}^{T}\int_{0}^{1}G_{T-s}(x,y)e^{b_1(T-s)}\sigma W(\ud s,\ud y)
\end{align*}
indicates that $u(T,x)$ is a Gaussian random variable with mean $m_1:=\int_0^1e^{b_1T}G_{T}\left(x,y\right)u_0(y)\ud y+\frac{c(1-e^{b_1T})}{b_1}$ and variance $\sigma_1:=\int_{0}^{T}\int_{0}^{1}G^2_{T-s}(x,y)e^{2b_1(T-s)}\sigma^2 \ud y\ud s.$ 
On the other hand, 
we use a version of Clark-Ocone formula for two parameter processes to obtain
 \begin{align*}
u^\delta(T,x)=\mathbb{E}[u^\delta(T,x)]+ \int_0^T\int_0^1 \mathbb{E}[D_{r, y}u^\delta(T,x)| \mathcal{F}_{r}] W(\ud r,\ud y),
\end{align*}
where
\begin{align*}
D_{r,y}u^\delta(T,x)=\sum_{j=0}^{N-1}\int_{t_j}^{t_{j+1}}\int_0^1G_{T-s}(x,y)b_1D_{r,y}u^\delta(t_j,y)\ud y\ud s+G_{T-r}(x,y)\sigma
\end{align*}
is independent of $\omega$. Therefore,
for any fixed $x\in(0,1)$, 
$u^\delta(T,x)$ is a Gaussian random variable with mean $m_2:=\E[u^\delta(T,x)]$ and variance $\sigma_2:=\E[u^\delta(T,x)^2]-(\E[u^\delta(T,x)])^2$.

Although it is not easy to give the explicit expressions of $m_2$ and $\sigma_2$, 
the  convergence order of density approximations of Eq. \eqref{SHE} with $b(u)=b_1u+c$ can be obtained by the strong convergence order.  In fact,
 we observe that
\begin{align*}
&|m_1-m_2|\le\|u^\delta(T,x)-u(T,x)\|_1,\\
&|\sigma_1-\sigma_2|\le2\|u^\delta(T,x)-u(T,x)\|_2(\|u^\delta(T,x)\|_2+\|u(T,x)\|_2),
\end{align*}
 and  that  by the mean value theorem
\begin{align*}
&\left|g_{\sigma_1}(\z-m_1)-g_{\sigma_2}(\z-m_2)\right|\\
\le&\left|\frac{\partial}{\partial m}g_{\sigma_1}(\z-(m_1+\theta(m_2-m_1)))\right||m_1-m_2|+\left|\frac{\partial}{\partial \sigma}g_{\sigma_1+\theta(\sigma_2-\sigma_1)}(\z-m_2)\right||\sigma_1-\sigma_2|\\
\le&C(\sigma_1,\sigma_2)\|u^\delta(T,x)-u(T,x)\|_2\left(\|u^\delta(T,x)\|_2+\left\|u(T,x)\right\|_2\right)\\
\le& C\|u^\delta(T,x)-u(T,x)\|_2,
\end{align*}
where $g$ is defined by \eqref{Gaussian} and $C$ is independent of $x\in(0,1),$ $z\in\R$ and $\theta\in(0,1)$. Further, it can be shown that for any $\nu\in(0,1)$, there exists $C=C(T,\nu,b,\sigma)$ such that (see e.g. \cite{AP08}),
\begin{align*}
\sup_{x\in(0,1)}\|u^\delta(T,x)-u(T,x)\|_2\le C\delta ^{\nu}.
\end{align*}
As a result, for any $\nu\in(0,1)$ and $x\in(0,1)$, there exists $C=C(T,\nu,b_1,c,\sigma)$ such that
 \begin{equation*}
 \sup_{\z\in\R}|q_{T,x}^\delta(\z)-q_{T,x}(\z)|\le C\delta ^{\nu}.
 \end{equation*}

 \end{exam}

Recall that the total variation distance of  probability measures $\mu$ and $\nu$ on a $\sigma$-algebra $\Sigma$ is defined by 
\begin{equation}\label{TVD}
d_{TV}(\mu,\nu)=2\sup\{|\mu(A)-\nu(A)|:A\in\Sigma \}.
\end{equation}
Let $\{B_t\}_{t\ge 0}$ be a standard $1$-dimensional Brownian motion on $(\Omega,\mathscr F,\mathbb P)$.
It is shown in \cite[Theorem 2.6]{BT96} that the numerical approximation $Y_N$ of the Euler-Maruyama scheme of elliptic stochastic differential equations $\ud Y (t) = f(Y (t))\ud t + \ud B_t,\,t\in[0,T]$, has weak convergence order $1$ even for bounded continuous test functions. This implies that
$$\lim_{\delta \rightarrow 0}d_{TV}\left(Y(T)\circ\mathbb P^{-1},Y_N\circ\mathbb P^{-1}\right)= 0.$$
For infinite dimensional case, we can derive a similar result that for Eqs. \eqref{SHE} and \eqref{SAEE} and for any fixed $x\in(0,1)$,
$$\lim_{\delta \rightarrow 0}d_{TV}\left(u(T,x)\circ\mathbb P^{-1},u^\delta(T,x)\circ\mathbb P^{-1}\right)=0.$$


In fact, because $u(T,x)$ and $u^\delta(T,x)$ have smooth densities $q_{T,x}$ and $q_{T,x}^\delta$, respectively, it is readily to verify that the set $A=\{\z:q_{T,x}(\z)>q_{T,x}^\delta(\z)\}$ attains  the supremum of $\sup\{|\mathbb P(u(T,x)\in A)-\mathbb P(u^\delta(T,x)\in A)|:A\in\mathscr B(\R) \}$, which leads to 
 $$d_{TV}\left(u(T,x)\circ\mathbb P^{-1},u^\delta(T,x)\circ\mathbb P^{-1}\right)=\int_\R|q_{T,x}^\delta(\z)-q_{T,x}(\z)|\ud \z.$$
For any $\eta\in(0,\frac{1}{2})$ and $\delta>0$,
 it follows from \eqref{error1} that
  \begin{equation*}
 \int_{-\delta^{-\eta}}^{\delta^{-\eta}}|q_{T,x}^\delta(\z)-q_{T,x}(\z)|\ud \z\le 2C\delta^{-\eta}\delta^{\frac{1}{2}}\le 2C\delta^{\frac{1}{2}-\eta}.
 \end{equation*}
Accordingly, we obtain
\begin{align*}
\int_\R|q_{T,x}^\delta(\z)-q_{T,x}(\z)|\ud \z\le&2C\delta^{\frac{1}{2}-\eta}+
\int_{-\infty}^{-\delta^{-\eta}}|q_{T,x}^\delta(\z)|\ud \z+\int_{\delta^{-\eta}}^{\infty}|q_{T,x}^\delta(\z)|\ud \z\\&+\int_{-\infty}^{-\delta^{-\eta}}|q_{T,x}(\z)|\ud \z+\int_{\delta^{-\eta}}^{\infty}|q_{T,x}^\delta(\z)|\ud \z\rightarrow 0,\,\text{as}\,\delta\rightarrow0,
 \end{align*}
since the last four integrals tend to $0$ as $\delta\rightarrow 0$ thanks to $\int_\R|q_{T,x}^\delta(\y)|\ud \y=\int_\R\left|q_{T,x}(\y)\right|\ud \y=1$.
\subsection{Logarithmic of asymptotic property}
In this part, we present the logarithmic asymptotic property of the density of the exact solution of Eq. \eqref{SHE}, which turn out to be preserved by Eq. \eqref{SAEE} exactly.
For this end, we begin with briefly recalling the Nourdin and Viens's result \eqref{rho0} on dominating the density of a general centered random variable $Z$ from above and below by means of Malliavin calculus.

%

For $Z\in\D^{1,2}$ with mean zero, define the function $h$ by 
\begin{equation*}\label{gz}
h(\z):=\E[\la DZ,-DL^{-1}Z\ra_{\uH}|Z=\z],\,\forall\,\z\in\R,
\end{equation*}
where $L^{-1}$ is the inverse of infinitesimal generator $L$ of Ornstein-Uhlenbeck semigroup.
If there exist $\sigma_{min},\,\sigma_{max}>0$ such that
\begin{equation*}
\sigma^2_{min}\le h(Z)\le \sigma^2_{max},\,a.s.,
\end{equation*}
then, by \cite[Corollary 3.5]{NV08}, $Z$ has a density $\rho$ satisfying, for almost every $\z\in\R$,
\begin{align}\label{rho0}
\frac{\E|Z|}{2 \sigma_{\min }^{2}} \exp \left(-\frac{\z^{2}}{2 \sigma_{\max }^{2}}\right) 
\le  \rho(\z) \le\frac{E|Z|}{2 \sigma_{\max }^{2}} \exp \left(-\frac{\z^{2}}{2 \sigma_{\min }^{2}}\right).
\end{align}

Suppose that the process $W^\prime=\{W^\prime(h),h\in\uH\}$ is an independent copy of $W$. If there is no confusion caused, $W:(\Omega,\mathscr F,\mathbb P)\rightarrow\R^\uH$ and   $W^\prime:(\Omega^\prime,\mathscr F^\prime,\mathbb P^\prime)\rightarrow\R^\uH$ can be  seen as the canonical mappings associated with the processes $W=\{W(h),h\in\uH\}$ and $W^\prime=\{W^\prime(h),h\in\uH\}$, respectively. 
If $Z\in\mathbb D^{1,2}$, we  write $DZ=\Psi_Z\circ W,$ where $\Psi_Z$ is a measurable mapping from $\R^{\uH}\rightarrow \uH$, determined $\mathbb P\circ W^{-1}$-almost surely (\cite[Section 1.4.1]{DN06}). Further, by \cite[Proposition 3.5]{NV08},
$h(Z)$ can be rewritten as 
\begin{equation*}
h(Z)=\int_0^\infty e^{-\theta}\mathbf E\left[\la \Psi_{Z}\circ W,\Psi_{Z}\circ (e^{-\theta}W+\sqrt{1-e^{-2\theta}}W^\prime)\ra_{\uH}\big|Z\right]\ud \theta,
\end{equation*}
where 
$\mathbf E$ denotes the expectation with respect to $\mathbb P\times\mathbb P^\prime$. By denoting $\boldsymbol{\omega}:=(\omega,\omega^\prime)$ and
\begin{align}\label{DZw}
\widetilde{DZ}(\boldsymbol{\omega}):=\Psi_{Z}\circ \left(e^{-\theta}W(\omega)+\sqrt{1-e^{-2\theta}}W^\prime(\omega^\prime)\right),
\end{align}
we have
$$h(Z)=\int_0^\infty e^{-\theta}\E\left[\E^\prime[\la DZ,\widetilde{DZ}\ra_{\uH}]\Big|Z\right]\ud \theta,$$
where $\E^\prime$ denotes the expectation with respect to $\mathbb P^\prime$ and the explicit dependence of $\widetilde {DZ}$ upon $\theta$ is dropped for simplicity of notation.

Based on the above techniques, we show the following theorem. 
\begin{tho}\label{Asy}
Let $b\in\mathcal C_{\mathbf b}^1$.  Then for any $x\in[0,1]$, $u(t,x)$ admits a density $q_{t,x}$ satisfying that for almost every $\z\in\R$,
\begin{align}\label{asyd}
\lim _{t \rightarrow 0} t^{\frac{1}{2}} \log q_{t,x}(\z)&=-\frac{\sqrt{ 2\pi}}{4\sigma^2}(1+\mathrm{sgn}(x(1-x)))(\z-u_0(x))^{2}.
\end{align}
\end{tho}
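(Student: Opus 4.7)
The approach is to apply the Nourdin--Viens density bound \eqref{rho0} to the centered variable $Z_t := u(t,x) - \E[u(t,x)]$. Since $b\in\mathcal C_{\mathbf b}^1$ guarantees $u(t,x)\in\D^{1,2}$ and the non-degeneracy of $u(t,x)$ (cf.\ \cite{MD08,NQ09}) ensures a smooth density, it suffices to pin down $h(Z_t)$ up to a multiplicative factor $1+o(1)$ as $t\to 0$. Using the representation recalled above in terms of the independent copy $W'$,
\[
h(Z_t)=\int_0^\infty e^{-\theta}\E\!\left[\E'[\la DZ_t,\widetilde{DZ_t}\ra_\uH]\,\big|\,Z_t\right]\ud\theta,
\]
the whole task reduces to sharp short-time asymptotics of $\la DZ_t,\widetilde{DZ_t}\ra_\uH = \int_0^t\!\int_0^1 D_{r,y}u(t,x)\,\widetilde{D_{r,y}u(t,x)}\,\ud y\,\ud r$.

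The mild equation
\[
D_{r,y}u(t,x) = \sigma G_{t-r}(x,y) + \int_r^t\!\int_0^1 G_{t-s}(x,z)\,b'(u(s,z))\,D_{r,y}u(s,z)\,\ud z\,\ud s
\]
can be iterated pathwise: because $b$ is deterministic with $|b'|\le|b|_1$, a pathwise application of Lemma \ref{GW} yields $|D_{r,y}u(s,z)|\le C G_{s-r}(z,y)$ a.s., and substituting back gives
\[
|D_{r,y}u(t,x) - \sigma G_{t-r}(x,y)| \le C(t-r)\,G_{t-r}(x,y)\quad\text{a.s.},
\]
with $C$ independent of $\omega$. The same estimate applies to $\widetilde{D_{r,y}u(t,x)}$, since the mixed noise $e^{-\theta}W + \sqrt{1-e^{-2\theta}}W'$ drives a copy of the same equation with the same deterministic coefficient. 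Multiplying and integrating delivers, almost surely,
\[
\bigl|\la DZ_t,\widetilde{DZ_t}\ra_\uH - v(t,x)\bigr|\le Ct\,v(t,x),\qquad v(t,x):=\sigma^2\int_0^t\!\int_0^1 G_{t-r}^2(x,y)\,\ud y\,\ud r,
\]
so that $h(Z_t) = v(t,x)(1+O(t))$ pathwise, and one may take $\sigma_{\min}(t)^2 = v(t,x)(1-Ct)$ and $\sigma_{\max}(t)^2 = v(t,x)(1+Ct)$ in \eqref{rho0}.

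The short-time asymptotics of $v(t,x)$ distinguishes the boundary from the interior. From the series \eqref{GD} and the decomposition $G_s=P_s+H_s$ of Lemma \ref{Green}, the dominant contribution as $s\downarrow 0$ comes from the $n=0$ terms; for $x\in(0,1)$ the reflected term $e^{-(x+y)^2/(4s)}$ is negligible on $[0,1]$, while for $x\in\{0,1\}$ it merges with the principal term and doubles the leading coefficient. Combined with $\int_\R P_s^2(x,y)\,\ud y = 1/\sqrt{8\pi s}$, this gives
\[
\int_0^1 G_s^2(x,y)\,\ud y = \frac{2-\mathrm{sgn}(x(1-x))}{\sqrt{8\pi s}}\,(1+o(1))\quad\text{as}\ s\downarrow 0,
\]
whence $v(t,x) = \sigma^2(2-\mathrm{sgn}(x(1-x)))\sqrt{t}/\sqrt{2\pi}\cdot(1+o(1))$.

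Finally, substituting into \eqref{rho0}, using $\E|Z_t|\asymp t^{1/4}$ (so that $t^{1/2}\log(\E|Z_t|/\sigma_{\min/\max}^2(t))\to 0$) and $\E[u(t,x)]\to u_0(x)$ as $t\to 0$ (continuity of the heat semigroup on $E$ applied in the mild formulation), both the upper and lower bounds sandwich $t^{1/2}\log q_{t,x}(\z)$ between quantities converging to $-(\z-u_0(x))^2/(2v_0(x))$, where $v_0(x):=\sigma^2(2-\mathrm{sgn}(x(1-x)))/\sqrt{2\pi}$. The elementary identity $2/(2-s) = 1+s$ valid for $s\in\{0,1\}$ converts this into the announced limit $-\frac{\sqrt{2\pi}}{4\sigma^2}(1+\mathrm{sgn}(x(1-x)))(\z-u_0(x))^{2}$. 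The main obstacle is promoting the $L^p$ estimates of Lemma \ref{Ykp11} to pathwise bounds on $D_{r,y}u(t,x)$ uniformly in the mixed driving noise, so that $h(Z_t)$ is actually trapped between deterministic constants with matching leading order; this rests on observing that the iteration behind Lemma \ref{GW} operates pathwise whenever the inequality it processes is pathwise, which holds here because $b$ is deterministic and Lipschitz.
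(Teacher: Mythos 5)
Your proposal is correct and uses the same global strategy as the paper: apply the Nourdin--Viens bound \eqref{rho0} to $Z_t = u(t,x)-\E[u(t,x)]$, derive pathwise two-sided bounds for $D_{r,y}u(t,x)$ and its tilde-copy, obtain $h(Z_t)\sim v(t,x)$ with explicit leading-order control, compute the short-time asymptotics of $v(t,x)=\sigma^2\int_0^t\int_0^1 G_s^2(x,y)\,\ud y\,\ud s$, and finish via L'H\^opital once $\E|Z_t|$ is shown to be negligible at this scale. The identity $2/(2-s)=1+s$ for $s\in\{0,1\}$ correctly reconciles your formulation with the one in \eqref{asyd}.

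The two technical ingredients are executed differently from the paper. For the pathwise Malliavin bound, the paper invokes the comparison principle of \cite[Lemma 4]{MD08} and gets the exact exponential envelope $e^{-|b|_1(t-r)}\sigma G_{t-r}(x,z)\le D_{r,z}u(t,x)\le e^{|b|_1(t-r)}\sigma G_{t-r}(x,z)$ directly (in particular lower- and upper-boundedness come for free), whereas you iterate the mild equation and apply Lemma \ref{GW} pathwise, obtaining the additive remainder $|D_{r,y}u(t,x)-\sigma G_{t-r}(x,y)|\le C(t-r)G_{t-r}(x,y)$, which is quantitatively cruder but equally sufficient for leading-order asymptotics. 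Note your bound is missing the factor $\sigma$ in the intermediate inequality $|D_{r,y}u(s,z)|\le C\sigma G_{s-r}(z,y)$, but this washes out. For the Green-function asymptotics, the paper passes through the cosine-spectral decomposition and a discrete summation argument, whereas you use the reflection series $G_t=P_t+H_t$ together with $\int_\R P_s^2(x,y)\,\ud y = 1/\sqrt{8\pi s}$; your identification of the doubled leading coefficient at $x\in\{0,1\}$ (coming from the $n=0$ image charge at $x=0$ and the $n=1$ image charge at $x=1$) is correct. Both routes are valid; the spectral route is more algebraic and instantly yields the closed-form constant, while the reflection route makes the boundary-versus-interior dichotomy geometrically transparent. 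Finally, be aware that with $b\in\mathcal C_\mathbf{b}^1$ only, the density need not be smooth; the existence of the density and the bound \eqref{rho0} come directly from the Nourdin--Viens result and do not require smoothness, so your reference to a ``smooth density'' is unnecessary (and slightly inaccurate) but does not affect the argument.
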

\begin{proof}

Without loss of generality, we assume that $\sigma>0$.
From \cite[Proposition 2.4.4]{DN06}, we have $u(t, x)\in\mathbb D^{1,2}$. 
For any fixed $(r,z)\in(0,T)\times[0,1]$, the Malliavin derivative $D_{r, z} u(t, x)$ satisfies 
\begin{equation*}
D_{r, z} u(t, x)=\sigma G_{t-r}(x,z)+\int_r^t\int_0^1b^{\prime}(u(s, y))D_{r, z}u(s, y)\ud y\ud s.
\end{equation*} 
Noticing that $-|b|_1\le b^{\prime}(u(s, y))\le |b|_1$, and by the comparison principle (\cite[Lemma 4]{MD08}), we obtain that, except on a $\mathbb{P}$-null set, for all $(t,x)\in(r,T]\times[0,1]$,
\begin{equation*}
 e^{-|b|_1(t-r)}\sigma G_{t-r}(x, z)\le D_{r, z} u(t, x)\le e^{|b|_1(t-r)}\sigma G_{t-r}(x, z).
\end{equation*}
Due to $u(t,x)\in \mathbb D^{1,2}$,  $Du(t,x)(\omega)=\Psi_{u(t,x)}(W(\omega))$ for some measurable mapping $\Psi_{u(t,x)}$ from $\R^{\uH}$ to $\uH$, $\mathbb P\circ W^{-1}$-a.s. For any $(r,z)\in(0,T)\times(0,1)$, we write $$\Psi^{r,z}_{u(t,x)}(W):=D_{r, z}u(t, x),$$
and then conclude that
 \begin{equation*}
e^{-|b|_1(t-r)}\sigma G_{t-r}(x, z)\le \Psi^{r,z}_{u(t,x)}(W)\le e^{|b|_1(t-r)}\sigma G_{t-r}(x, z),\,\mathbb P-a.s.
\end{equation*}
Substituting $Z$ by $u(t,x)$ in \eqref{DZw}, we denote $\widetilde {Du(t,x)}=\Psi_{u(t,x)}(e^{-\theta}W+\sqrt{1-e^{-2\theta}}W^\prime)$.
Since the process $\mathbf W=\{\mathbf W(h),\,h\in\uH\}$ defined by $$\mathbf W(h)=e^{-\theta}W(h)+\sqrt{1-e^{-2\theta}}W^\prime(h),\,h\in\uH,$$
 is Gaussian on the product probability space $(\Omega\times\Omega^\prime,\mathscr F\bigotimes \mathscr F^\prime,\mathbb P\times\mathbb P^\prime)$, with mean zero and with the same covariance function as $W$(see \cite[Section 1.4.1]{DN06}),
 $\widetilde {Du(t, x)}=\{\widetilde {D_{r,z}u(t, x)},(r,z)\in(0,T]\times(0,1)\}$ has the same distribution as $Du(t,x)=\{{D_{r,z}u(t, x)},(r,z)\in(0,T]\times(0,1)\}$ and hence satisfies,
 except on a $\mathbb{P}\times\mathbb{P}^\prime$-null set,   
 \begin{equation*}
e^{-|b|_1(t-r)}\sigma G_{t-r}(x, z)\le \widetilde {D_{r, z} u(t, x)}\le e^{|b|_1(t-r)}\sigma G_{t-r}(x, z).
\end{equation*}
Putting the above arguments together, for any $t>0$ and $x\in[0,1]$, we obtain
 \begin{align*} 
 h(u(t,x)) 
 &=\int_{0}^{\infty} e^{-\theta} \E\left[\E^{\prime}\left(\int_{0}^{t} \int_{0}^{1} D_{r, z} u(t, x)\widetilde{D_{r, z} u(t, x)}  \ud z \ud r\right)\Big | u(t,x)\right] \ud \theta \\
 &\ge \int_0^t\int_0^1e^{-2|b|_1(t-r)}\sigma^2 G^2_{t-r}(x, z)\ud z \ud r= \int_0^t\int_0^1e^{-2|b|_1 r}\sigma^2 G^2_r(x, z)\ud z \ud r=:\sigma^2_{\min}
 \end{align*}
 and
  \begin{align*} 
 h(u(t,x)) 
 &\le \int_0^t\int_0^1e^{2|b|_1 r}\sigma^2 G^2_{r}(x, z)\ud z \ud r=:\sigma^2_{\max}.
 \end{align*}
Therefore, from \eqref{rho0} we deduce immediately that $u(t,x)$ has a density $q_{t,x}$ satisfying, for almost all $\z\in\R$,
\begin{align}\nonumber
\frac{\E|u(t,x)-\E[u(t,x)]|}{2 \sigma_{\min }^{2}} &\exp \left(-\frac{(\z-\E[u(t,x)])^{2}}{2 \sigma_{\max }^{2}}\right) \\\label{rho}
&\le  q_{t,x}(\z) \le\frac{\E|u(t,x)-\E[u(t,x)]|}{2 \sigma_{\max }^{2}} \exp \left(-\frac{(\z-\E[u(t,x)])^{2}}{2 \sigma_{\min }^{2}}\right).
\end{align}
Notice that there exist some constants $C$ and $\widetilde C$ independent of $t$, $b$ and $\sigma$ such that 
\begin{align}\label{max-min}
Ct^{\frac{1}{2}}e^{-2t|b|_1}\le \sigma^2_{\min}\le\sigma^2_{\max}\le \widetilde Ct^{\frac{1}{2}}e^{2t|b|_1},\,\forall\,t\in(0,1],
\end{align}
in view of \eqref{GG0}.
We claim that
\begin{equation}\label{Gt2}
\lim _{t \rightarrow 0}t^{\frac{1}{2}}\int_{0}^{1}G^2_{t}(x,y) \ud y=\frac{1}{(1+\mathrm{sgn}(x(1-x)))\sqrt{ 2\pi}},\,\forall\,x\in[0,1].
\end{equation}
In fact, the spectral decomposition 
\begin{equation*}
G_{t}(x, y)=2 \sum_{k=0}^{\infty}e^{-k^{2} \pi^{2} t}\cos (k \pi x)\cos (k \pi y)
\end{equation*}
and the identity $2\cos(j\pi y)\cos (k \pi y)=\cos((j+k)\pi y)+\cos((j-k)\pi y)$
allow us to calculate
\begin{align*}
\int_0^1G^2_{t}(x, y)\ud y&=2 \sum_{k=0}^\infty\sum_{j=0}^\infty e^{(-k^2-j^2) \pi^{2}t}\cos (k\pi x)\cos(j\pi x)\int_0^12\cos(j\pi y)\cos (k \pi y)\ud y \\
&=2+2 \sum_{k=0}^\infty\sum_{j=0}^\infty e^{(-k^2-j^2) \pi^{2}t}\cos (k\pi x)\cos(j\pi x)\int_0^1\cos((j-k)\pi y)\ud y\\
&=2+2 \sum_{k=0}^\infty e^{-2k^2\pi^{2}t}\cos^2(k\pi x)=2+G_{2t}(x,x).
\end{align*}
Accordingly, for any $x\in[0,1]$, we have
\begin{align*}
&t^{\frac{1}{2}}\int_{0}^{1}G^2_{t}(x,y) \ud y=2t^{\frac{1}{2}}+t^{\frac{1}{2}}
G_{2t}(x,x)=2t^{\frac{1}{2}}+\frac{1}{\sqrt{8 \pi }}\sum_{n=-\infty}^{+\infty}\left(e^{-\frac{n^{2}}{2 t}}+e^{-\frac{(x- n)^{2}}{2t}}\right)\\
=&2t^{\frac{1}{2}}+\frac{1}{\sqrt{8 \pi }}\left\{1+e^{-\frac{(x-1)^{2}}{ 2t}}+e^{-\frac{x^{2}}{ 2t}}+2\sum_{n=1}^{\infty}e^{-\frac{n^{2}}{2t}}+\sum_{n=2}^{\infty}e^{-\frac{(x- n)^{2}}{2t}}+\sum_{n=1}^{\infty}e^{-\frac{(x+ n)^{2}}{2t}}\right\}.
\end{align*}
Using the Euler-Poisson integral: $\int_{\mathbb{R}} e^{-\pi x^{2}}\ud x=1$, we have
\begin{align}\label{pit}
0<\sum_{n=1}^{\infty}e^{-\frac{n^{2}}{ 2t}}\le\int_0^\infty e^{-\frac{y^{2}}{ 2t}}\ud y=\sqrt{\frac{\pi t}{2}}.
\end{align}
Combining \eqref{pit} and the fact $$0<\sum_{n=2}^{\infty}e^{-\frac{(x- n)^{2}}{2t}}+\sum_{n=1}^{\infty}e^{-\frac{(x+ n)^{2}}{2t}}\le 2\sum_{n=1}^{\infty}e^{-\frac{n^{2}}{2 t}},\,\forall\,x\in[0,1],$$ 
we arrive at
\begin{equation*}
\lim _{t \rightarrow 0}t^{\frac{1}{2}}\int_{0}^{1}G^2_{t}(x,y) \ud y=\left\{
\begin{split}
&\frac{1}{\sqrt{ 2\pi }},\quad \text{if $x\in\{0,1\}$},\\
&\frac{1}{2\sqrt{2 \pi }},\quad \text{if $x\in(0,1)$},
\end{split}\right.
\end{equation*}
which is equivalent to \eqref{Gt2}.

\textsf{Upper bound}: By \eqref{max-min}, \eqref{Gt2},  the right hand of \eqref{rho} and L'H\^opital's rule, we have
\begin{align*}
\lim _{t \rightarrow 0} t^{\frac{1}{2}} \log q_{t,x}(\z)&\le\lim _{t \rightarrow 0} t^{\frac{1}{2}}\log\frac{\E|u(t,x)-\E[u(t,x)]|}{2 \sigma_{\max }^{2}} +\lim _{t \rightarrow 0} t^{\frac{1}{2}} \left(-\frac{(\z-\E[u(t,x)])^{2}}{2 \sigma_{\min }^{2}}\right)\\
&=-(\z-u_0(x))^{2}\lim _{t \rightarrow 0}  \frac{t^{\frac{1}{2}}}{2 \sigma_{\min }^{2}}=-\frac{(\z-u_0(x))^{2}}{4}\lim _{t \rightarrow 0}  \frac{1}{e^{-2|b|_1 t}\sigma^2 t^{\frac{1}{2}}\int_0^1G^2_t(x, z)\ud z }\\
&=-\frac{(\z-u_0(x))^{2}}{4\sigma^2}(1+\mathrm{sgn}(x(1-x)))\sqrt{ 2\pi},
\end{align*}
where in the first step, we have used the facts that $\E[|u(t,x)|]$ is uniformly bounded with respect to $t\in(0,1]$ and $x\in[0,1]$ when dealing with the first limit and  that $\lim\limits_{t\rightarrow 0}\E[u(t,x)]=u(0,x)=u_0(x)$ when dealing with the second limit.

\textsf{Lower bound}: Similarly, from the left hand of \eqref{rho} and L'H\^opital's rule, it follows that
\begin{align*}
\lim _{t \rightarrow 0} t^{\frac{1}{2}} \log q_{t,x}(\z)&\ge-\frac{(\z-u_0(x))^{2}}{4\sigma^2}(1+\mathrm{sgn}(x(1-x)))\sqrt{2 \pi}.
\end{align*}
The proof is completed.

\end{proof}

From the above theorem, even if the drift term $b$ is nonlinear, the behavior of $q_{t,x}$ looks like a Gaussian density with mean $u_0(x)$ and covariance nearly proportional to $t^{\frac{1}{2}}$ when $t$ is sufficiently small, since
\begin{align*}
 q_{t,x}(\z)&\approx\exp\left(-\frac{\sqrt{ 2\pi}(1+\mathrm{sgn}(x(1-x)))(\z-u_0(x))^{2}}{4\sigma^2t^{\frac{1}{2}}}\right),\,0<t\ll1.
\end{align*}
Roughly speaking, for fixed $x\in[0,1]$,  the distribution of $u(t,x)$ decays to the distribution $\bm\delta_{u_0(x)}$ of $u(0,x)$ exponentially as $t$ tends to $0$.

Under Dirichlet boundary condition, we denote by $\mathbf u(t,x)$ the corresponding solution to Eq. \eqref{SHE}. 
By a slight modification, a similar result can be proved.

\begin{cor}\label{Dir-Cor}
Under the same condition of Theorem \ref{Asy}, except replacing the Neumann boundary condition by the Dirichlet boundary condition,  for any $x\in(0,1)$,  $\mathbf u(t,x)$ 
 admits a density $\mathbf{q}_{t,x}$ satisfying  that for almost every $\z\in\R$,
\begin{align*}
\lim _{t \rightarrow 0} t^{\frac{1}{2}} \log\mathbf q_{t,x}(\z)=-\frac{\sqrt{ 2\pi}}{2\sigma^2}(\z-u_0(x))^{2}.
\end{align*}
\end{cor}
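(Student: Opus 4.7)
The plan is to repeat the proof of Theorem \ref{Asy} essentially verbatim, with the Neumann Green function $G$ replaced throughout by the Dirichlet Green function $\mathbf{G}$ from \eqref{Diri}, and with the asymptotic constant in \eqref{Gt2} recalculated. First I would show that $\mathbf{u}(t,x) \in \D^{1,2}$ and that its Malliavin derivative satisfies
$$D_{r,z}\mathbf{u}(t,x) = \sigma \mathbf{G}_{t-r}(x,z) + \int_r^t\!\!\int_0^1 b'(\mathbf{u}(s,y))\,D_{r,z}\mathbf{u}(s,y)\,\ud y\,\ud s.$$
Using positivity of $\mathbf{G}$ (see \cite[Lemma 7]{BM90}) and a comparison principle analogous to \cite[Lemma 4]{MD08}, this yields the two-sided pointwise bound
$$e^{-|b|_1(t-r)}\sigma \mathbf{G}_{t-r}(x,z) \le D_{r,z}\mathbf{u}(t,x) \le e^{|b|_1(t-r)}\sigma \mathbf{G}_{t-r}(x,z),\quad \mathbb{P}\text{-a.s.}$$
Feeding this into the Nourdin--Viens representation of $h(\mathbf{u}(t,x))$ with an independent copy $\widetilde{D\mathbf{u}(t,x)}$, and then into \eqref{rho0}, gives density bounds of the same shape as \eqref{rho} with
$$\boldsymbol{\sigma}_{\min}^2 = \int_0^t\!\!\int_0^1 e^{-2|b|_1 r}\sigma^2 \mathbf{G}_r^2(x,z)\,\ud z\,\ud r,\qquad \boldsymbol{\sigma}_{\max}^2 = \int_0^t\!\!\int_0^1 e^{2|b|_1 r}\sigma^2 \mathbf{G}_r^2(x,z)\,\ud z\,\ud r.$$

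The crux, and the main obstacle, is then the short-time asymptotic
$$\lim_{t \to 0}\, t^{1/2}\int_0^1 \mathbf{G}_t^2(x,y)\,\ud y = \frac{1}{2\sqrt{2\pi}},\qquad x \in (0,1),$$
which replaces \eqref{Gt2}. Unlike the Neumann case there is no boundary amplification, because the minus sign in \eqref{Diri} removes the reflected bump at interior points. I would establish the limit by either of two routes: (i) decomposing $\mathbf{G}_t(x,y) = P_t(x,y) - P_t(x,-y) + \widetilde{\mathbf{H}}_t(x,y)$ where $\widetilde{\mathbf{H}}_t$ collects the $n\neq 0$ images, and noting that for every $x \in (0,1)$ each of $P_t(x,-y)$ and each summand of $\widetilde{\mathbf{H}}_t(x,y)$ is bounded by $(4\pi t)^{-1/2} e^{-c(x)/t}$ with $c(x) = \min(x,1-x)^2/4 > 0$, so these contribute $o(t^{-1/2})$ after squaring and integrating, while the free kernel produces $\int_0^1 P_t^2(x,y)\,\ud y \to (8\pi t)^{-1/2}$ since $P_t(x,\cdot)$ concentrates on a neighborhood of $x$ inside $(0,1)$; or (ii) using the sine expansion $\mathbf{G}_t(x,y) = 2\sum_{k\ge 1} e^{-k^2\pi^2 t}\sin(k\pi x)\sin(k\pi y)$ and Parseval to write
$$\int_0^1 \mathbf{G}_t^2(x,y)\,\ud y = \sum_{k\ge 1} e^{-2k^2\pi^2 t}\bigl(1 - \cos(2k\pi x)\bigr),$$
then extracting the leading $(8\pi t)^{-1/2}$ from the first sum via the Poisson summation formula exactly as in the Neumann proof and controlling the remaining cosine sum by a shifted heat kernel evaluation that stays $O(1)$ for $x \in (0,1)$.

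Once the asymptotic constant is in hand, the rest is a purely mechanical repetition of the L'H\^opital argument in Theorem \ref{Asy}: the density bounds of \eqref{rho} type, together with uniform boundedness of $\E|\mathbf{u}(t,x)|$ for $t\in(0,1]$ and the mild-solution continuity $\lim_{t\to 0}\E[\mathbf{u}(t,x)] = u_0(x)$ at interior $x$, yield upper and lower estimates that both converge to
$$-\frac{(\z - u_0(x))^2}{4\sigma^2}\cdot 2\sqrt{2\pi} = -\frac{\sqrt{2\pi}}{2\sigma^2}(\z - u_0(x))^2,$$
as desired. The restriction to $x \in (0,1)$ in the statement is necessary precisely because $\mathbf{u}(t,0) = \mathbf{u}(t,1) = 0$ under Dirichlet boundary conditions makes the density nonexistent at the endpoints, so no additional endpoint analysis is required (in contrast to the Neumann case where $x\in\{0,1\}$ produces the larger constant $1/\sqrt{2\pi}$ via the boundary bump).
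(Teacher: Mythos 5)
Your proposal is correct and, in its route (ii), matches the paper's proof essentially verbatim: the paper uses the sine expansion to show $\int_0^1\mathbf G_t^2(x,y)\,\ud y=\mathbf G_{2t}(x,x)=\sum_{k\ge1}e^{-2k^2\pi^2 t}(1-\cos(2k\pi x))$, then evaluates $\mathbf G_{2t}(x,x)$ via the image representation \eqref{Diri} and the Gaussian tail bound $\sum_{n\ge1}e^{-n^2/(2t)}\le\sqrt{\pi t/2}$ to extract the limit $\frac{1}{2\sqrt{2\pi}}$, after which the L'H\^opital argument from Theorem \ref{Asy} carries over unchanged. Your route (i) (direct image decomposition with the off-diagonal bumps decaying like $e^{-c(x)/t}$) is a valid alternative that the paper does not pursue, and your observation that the Dirichlet case degenerates at $x\in\{0,1\}$ correctly explains the restriction to interior points.
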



 We also investigate the logarithmic asymptotic property of the density of the approximation $\{u^\delta(\delta,x)\}_{\delta>0}$ associated to Eq. \eqref{SAEE}, as the perturbation parameter $\delta$ tends to $0$. 
 It is
observed that the limit $\lim _{\delta  \rightarrow 0} \delta ^{\frac{1}{2}} \log q^\delta_{\delta ,x}(\z)$ is exactly the limit $\lim _{t \rightarrow 0} t^{\frac{1}{2}} \log q_{t,x}(\z)$.   \begin{pro}\label{Ndensity}
Assume that $b\in\mathcal C_{\mathbf b}^1$.  Then for any $x\in[0,1]$, the solution $u^\delta(\delta,x)$ given by  Eq. \eqref{SAEE} admits a density $q^\delta_{\delta ,x}$ satisfying  that for almost every $\z\in\R$,
\begin{align*}
\lim _{\delta  \rightarrow 0} \delta ^{\frac{1}{2}} \log q^\delta_{\delta,x}(\z)&=-\frac{\sqrt{ 2\pi}}{4\sigma^2}(1+\mathrm{sgn}(x(1-x)))(\z-u_0(x))^{2}.
\end{align*}
\end{pro}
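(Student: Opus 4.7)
\textit{Proof proposal.} The plan is to exploit a feature of the one-step approximation that is absent for the exact solution: since $u^\delta(0,\cdot)=u_0$ is deterministic and $[s/\delta]=0$ for every $s\in(0,\delta)$, the mild formula \eqref{perturbed sol} evaluated at $t=\delta$ collapses to
\begin{equation*}
u^\delta(\delta,x)=\int_0^1 G_\delta(x,y)u_0(y)\,\ud y+\int_0^\delta\!\!\int_0^1 G_{\delta-s}(x,y)b(u_0(y))\,\ud y\,\ud s+\sigma\int_0^\delta\!\!\int_0^1 G_{\delta-s}(x,y)\,W(\ud s,\ud y).
\end{equation*}
The drift integrand is deterministic, so $u^\delta(\delta,x)$ is a Gaussian random variable. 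Its mean $m_\delta(x)$ is the sum of the first two terms, while its variance equals
\begin{equation*}
v_\delta(x)=\sigma^2\int_0^\delta\!\!\int_0^1 G^2_{\delta-s}(x,y)\,\ud y\,\ud s=\sigma^2\int_0^\delta\!\!\int_0^1 G^2_s(x,y)\,\ud y\,\ud s,
\end{equation*}
and consequently the density has the explicit closed form
\begin{equation*}
q^\delta_{\delta,x}(\z)=\frac{1}{\sqrt{2\pi v_\delta(x)}}\exp\!\left(-\frac{(\z-m_\delta(x))^2}{2v_\delta(x)}\right).
\end{equation*}

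Next I would extract the small-$\delta$ asymptotics of $m_\delta(x)$ and $v_\delta(x)$. Continuity of $u_0\in E$, the identity $\int_0^1 G_\delta(x,y)\,\ud y=1$ recalled in Section \ref{S2}, and boundedness of $b$ on the range of $u_0$ together yield $m_\delta(x)\to u_0(x)$ (the deterministic drift piece contributes only $O(\delta)$). For the variance I would invoke the limit \eqref{Gt2} already proved inside Theorem \ref{Asy}: writing $\phi(s):=\int_0^1 G^2_s(x,y)\,\ud y$, one has $s^{1/2}\phi(s)\to C_x:=[(1+\mathrm{sgn}(x(1-x)))\sqrt{2\pi}]^{-1}$ as $s\to 0$. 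Applying L'H\^opital's rule to $v_\delta(x)/\sqrt{\delta}=\sigma^2\delta^{-1/2}\!\int_0^\delta\phi(s)\,\ud s$ then gives
\begin{equation*}
\lim_{\delta\to 0}\delta^{-1/2} v_\delta(x)=2\sigma^2 C_x.
\end{equation*}

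Finally, taking the logarithm of the Gaussian density and multiplying by $\delta^{1/2}$,
\begin{equation*}
\delta^{1/2}\log q^\delta_{\delta,x}(\z)=-\frac{\delta^{1/2}}{2}\log\bigl(2\pi v_\delta(x)\bigr)-\frac{\delta^{1/2}(\z-m_\delta(x))^2}{2v_\delta(x)}
\end{equation*}
reduces the proposition to a routine limit. Since $v_\delta(x)$ is comparable to $\sqrt{\delta}$, the first summand satisfies $\delta^{1/2}\log v_\delta(x)\to 0$ and hence vanishes; the second summand converges to $-\frac{(\z-u_0(x))^2}{4\sigma^2 C_x}=-\frac{\sqrt{2\pi}}{4\sigma^2}(1+\mathrm{sgn}(x(1-x)))(\z-u_0(x))^2$ by combining the two asymptotics above. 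I do not expect any genuine obstacle: once the deterministic nature of the drift on $[0,\delta]$ is noticed, the Gaussian structure trivializes everything and the whole argument is driven by the one small-time Green function computation \eqref{Gt2} already available from the proof of Theorem \ref{Asy}. The only mild technical point is the L'H\^opital step transferring the pointwise asymptotic of $\phi(s)$ to that of its time integral, which presents no real difficulty.
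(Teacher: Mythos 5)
Your proposal is correct and follows essentially the same route as the paper: identify $u^\delta(\delta,x)$ as Gaussian with explicit mean and variance (since the drift on $[0,\delta]$ is evaluated at the deterministic $u_0$), observe that the $\log$-of-prefactor term is killed by the factor $\delta^{1/2}$ because $v_\delta(x)\asymp\sqrt{\delta}$, and obtain the exponential rate from the small-time Green function asymptotic \eqref{Gt2} together with L'H\^opital applied to $\delta^{-1/2}\int_0^\delta\int_0^1 G_s^2(x,y)\,\ud y\,\ud s$. The only cosmetic difference is that you carry out the L'H\^opital step as a standalone computation of $\lim_{\delta\to 0}\delta^{-1/2}v_\delta(x)$ before substituting, whereas the paper folds it directly into the limit of the exponent, but this is the identical calculation.
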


The proofs of Corollary \ref{Dir-Cor} and Proposition \ref{Ndensity} are similar to that of Theorem \ref{Asy} and are postponed to Appendix.

\textbf{Acknowledgments.}
The authors are very grateful to Charles-Edouard Br\'ehier (Univ Lyon)  for his helpful discussions and suggestions.

\section*{Appendix}\label{Appendix}

In the Appendix, we give the proofs of some technique results for reader's convenience.

\subsection*{Proof of Lemma \ref{GW}}
\begin{proof}
Taking the supremum over $x\in(0,1)$, then for any $y\in(0,1)$,
\begin{equation*}
\sup_{x\in(0,1)}g_{s,y}(t,x)\le \frac{C}{\sqrt{t-s}}+C\int_{s}^{t}\sup_{z_1\in(0,1)}g_{s,y}(r_1,z_1)\ud r_1,
\end{equation*}
which, together with Gronwall's inequality, implies that for some $C=C(T)$,
\begin{equation}\label{gsy}
\sup_{x\in(0,1)}g_{s,y}(t,x)\le \frac{C}{\sqrt{t-s}}+C,\forall\,y\in(0,1).
\end{equation}
By an iteration process and the semigroup property of $G$, we have
\begin{align*}
 &g_{s, y}(t, x)\le C G_{t-s}(x,y)+C \int_{s}^{t} \int_0^1 G_{t-r_{1}}(x,z_{1})G_{r_{1}-s}(z_{1},y)\ud z_1\ud r_1+\cdots\\
 &\quad+C^{n} \int_{s}^{t}\int_0^1\int_s^{r_1}\int_0^1\cdots\int_s^{r_{n-1}}\int_0^1G_{t-r_1}(x,z_1)G_{r_1-r_2}(z_1,z_2)\\
 &\qquad\cdots G_{r_{n-1}-r_n}(z_{n-1},z_n)G_{r_n-s}(z_n,y)\ud z_n\ud r_n\cdots\ud z_2\ud r_2\ud z_1\ud r_1\\
 &\quad+C^n\int_{s}^{t}\int_0^1\int_s^{r_1}\int_0^1\cdots\int_s^{r_n}\int_0^1G_{t-r_1}(x,z_1)G_{r_1-r_2}(z_1,z_2)\\
 &\quad\cdots G_{r_{n-1}-r_n}(z_{n-1},z_n)G_{r_n-r_{n+1}}(z_n,z_{n+1})g_{s, y}(r_{n+1},z_{n+1})\ud z_{n+1}\ud r_{n+1}\cdots\ud z_2\ud r_2\ud z_1\ud r_1\\
 &\le \left(C+ C(t-s)+\cdots+C^n\frac{(t-s)^n}{n!}\right)G_{t-s}(x,y)\\
 &\quad+C^n\frac{(t-s)^n}{n!}\int_s^t\int_0^1G_{t-r_{n+1}}(x,z_{n+1})g_{s, y}(r_{n+1},z_{n+1})\ud z_{n+1}\ud r_{n+1},
  \end{align*}
where the first term on the right-hand side is bounded by $e^{CT}G_{t-s}(x,y)$ and the second term 
 is dominated by 
$$C^n\frac{(t-s)^n}{n!}C(T)\left(\int_s^t\frac{1}{\sqrt{t-r_{n+1}}}\frac{1}{\sqrt{r_{n+1}-s}}\ud r_{n+1}+1\right),$$  which tends to $0$ as $n\rightarrow \infty$, thanks to \eqref{gsy}. The proof is completed.
\end{proof}

\subsection*{Proof of Corollary \ref{Dir-Cor}}
\begin{proof}
In contrast to \eqref{Gt2},
it suffices to show that for every $x\in(0,1),$
\begin{equation}\label{Gt21}
\lim _{t \rightarrow 0}t^{\frac{1}{2}}\int_{0}^{1}\mathbf G^2_{t}(x,y) \ud y=\frac{1}{2\sqrt{ 2\pi}}.
\end{equation} 
Fix $x\in(0,1)$. Now we proceed to verify \eqref{Gt21}.
 Indeed, the expression 
\begin{equation*}
\mathbf G_{t}(x, y)=2 \sum_{k=1}^{\infty}e^{-k^{2} \pi^{2} t}\sin (k \pi x)\sin (k \pi y)
\end{equation*}
and the identity $2\sin(j\pi y)\sin (k \pi y)=-\cos((j+k)\pi y)+\cos((j-k)\pi y)$ yield
\begin{align*}
\int_0^1\mathbf G^2_{t}(x, y)\ud y&=\mathbf G_{2t}(x,x).
\end{align*}
Furthermore, by applying \eqref{Diri}, we have
\begin{align*}
&t^{\frac{1}{2}}\int_{0}^{1}\mathbf G^2_{t}(x,y) \ud y\\
=&
\frac{1}{\sqrt{8 \pi }}\left\{1-e^{-\frac{(x-1)^{2}}{ 2t}}-e^{-\frac{x^{2}}{ 2t}}+\left(\sum_{n=1}^{\infty}e^{-\frac{n^{2}}{2t}}-\sum_{n=2}^{\infty}e^{-\frac{(x- n)^{2}}{2t}}\right)+\left(\sum_{n=1}^{\infty}e^{-\frac{n^{2}}{2t}}-\sum_{n=1}^{\infty}e^{-\frac{(x+ n)^{2}}{2t}}\right)\right\}
\end{align*}
with 
$$0\le\sum_{n=1}^{\infty}e^{-\frac{n^{2}}{2t}}-\sum_{n=2}^{\infty}e^{-\frac{(x- n)^{2}}{2t}}\le \sqrt{\frac{\pi t}{2}}$$
and
$$0\le\sum_{n=1}^{\infty}e^{-\frac{n^{2}}{2t}}-\sum_{n=1}^{\infty}e^{-\frac{(x+ n)^{2}}{2t}}\le \sqrt{\frac{\pi t}{2}}.$$
Putting $t\rightarrow 0$, the desired identity \eqref{Gt21} follows and the proof is finished.
\end{proof}
\subsection*{Proof of Corollary \ref{Ndensity}}
 \begin{proof}
First, we fix $x\in(0,1)$.
By \eqref{perturbed sol}, $u^\delta(t,x )|_{t=\delta}$ is computed by
\begin{align*}
u^\delta(\delta,x )=&\int_{0}^{1}G_{\delta}(x,y)u_0(y)\ud y+\int^{\delta}_{0}\int_{0}^{1}G_{\delta-s}(x,y)b\left(u_0(y)\right)\ud y\ud s \\
&+\int_{0}^{\delta}\int_{0}^{1}G_{\delta-s}(x,y)\sigma W(\ud s,\ud y),
\end{align*}
which implies that the distribution of $u^\delta(t,x )|_{t=\delta}$ is Gaussian and hence is denoted by $\mathcal N(\mu_{\delta},\nu_{\delta})$. By applying the isometry formula, we have
\begin{align*}
&\mu_{\delta}=\int_{0}^{1}G_{\delta}(x,y)u_0(y)\ud y+\int_{0}^{\delta}\int_{0}^{1}G_{\delta-s}(x,y)b\left(u_0(y)\right)\ud y\ud s ,\\
&\nu_{\delta}=\int_{0}^{\delta}\int_{0}^{1}G^2_{\delta-s}(x,y)\sigma^2\ud y\ud s=\sigma^2\int_{0}^{\delta}\int_{0}^{1}G^2_{s}(x,y)\ud y\ud s.
\end{align*}
Therefore,
\begin{align}\nonumber\label{limita}
\lim _{\delta  \rightarrow 0} \delta^{\frac{1}{2}} \log q^\delta_{\delta,x }(\z)&=\lim _{\delta  \rightarrow 0} \delta^{\frac{1}{2}} \log \frac{1}{\sqrt{2\pi\nu_{\delta}}}e^{-\frac{(\z-\mu_{\delta})^2}{2\nu_{\delta}}}\\
=&\lim _{\delta  \rightarrow 0} \delta^{\frac{1}{2}} \log \frac{1}{\sqrt{2\pi\nu_{\delta}}}+\lim _{\delta  \rightarrow 0}  -\delta^{\frac{1}{2}}\frac{(\z-\mu_{\delta})^2}{2\nu_{\delta}}.
\end{align}
 Taking $|b|_1=0$ in \eqref{max-min} yields
$C\delta^{\frac{1}{2}}\le \int_{0}^{\delta}\int_{0}^{1}G^2_{s}(x,y)\ud y\ud s\le \widetilde C\delta^{\frac{1}{2}},\,\forall\,\delta\in(0,1],$ which indicates the first limit in the right hand of \eqref{limita} is zero. Observing that $\lim _{\delta  \rightarrow 0}\mu_{\delta}=u_0(x)$ and using \eqref{Gt2}, as well as L'H\^opital's rule, we finally derive that
\begin{align*}
\lim _{\delta  \rightarrow 0} \delta^{\frac{1}{2}} \log q^\delta_{\delta,x }(\z)=
\lim _{\delta  \rightarrow 0}  -\delta^{\frac{1}{2}}\frac{(\z-u_0(x))^2}{2\nu_{\delta}}=-\frac{\sqrt{ 2\pi}}{4\sigma^2}(1+\mathrm{sgn}(x(1-x)))(\z-u_0(x))^{2}.
\end{align*}
\end{proof}



\subsection*{Proof of Proposition \ref{DZ}}
\begin{proof}
Similar to the proof of \eqref{Yiy1}, for $\theta\in(0,t_{i-1})$, 
\begin{align*}
\|D_{\theta,\xi}Z_i^\beta(r,y)\|_{k,p}
&\le C G_{r-\theta}(y,\xi).
\end{align*}
Similar to the proof of Lemma \ref{Ykp0},
for $\theta\in(t_{i-1},r)$, we only give the details of the case $k=0$, and the induction argument for $k\ge1$ is omitted.
By the definition of $Z_i^\beta(r,y)$, for $\theta\in(t_{i-1},r)$, 
\begin{align*}
D_{\theta,\xi}&Z_i^\beta(r,y)=\beta D_{\theta,\xi}\varphi_{r}^y(t_{i-1},\Phi_{t_{i-1}}(0,u_0))
= \beta\sigma G_{r-\theta}(y,\xi)\\
&+\beta\int_{\theta}^{r}\int_0^1G_{r-s}(y,z)b^\prime(\varphi_s^z(t_{i-1},\Phi_{t_{i-1}}(0,u_0)))D_{\theta,\xi}\varphi_s^z(t_{i-1},\Phi_{t_{i-1}}(0,u_0))\ud z\ud s,
\end{align*}
which together with Lemma \ref{GW} gives 
\begin{align*}
\|D_{\theta,\xi}Z_i^\beta(r,y)\|_{p}\le C G_{r-\theta}(y,\xi).
\end{align*}

\end{proof}

\bibliographystyle{plain}
\bibliography{pdfreference}

\end{document}